\documentclass{amsart}
\usepackage{amssymb}
\usepackage{amsthm}
\usepackage{tikz}
\usepackage{hyperref}
\newtheorem{theorem}{Theorem}

\newtheorem{lemma}{Lemma}

\newtheorem{proposition}{Proposition}
\newtheorem{corollary}{Corollary}

\begin{document}

\title{A Spiegelungssatz for function field $L$-series}

\author{Bruno Angl\`es   \and Floric Tavares Ribeiro}

\address{
Universit\'e de Caen Basse-Normandie, 
Laboratoire de Math\'ematiques Nicolas Oresme, 
CNRS UMR 6139, 
Campus II, Boulevard Mar\'echal Juin, 
B.P. 5186, 
14032 Caen Cedex, France.
}
\email{bruno.angles@unicaen.fr, floric.tavares-ribeiro@unicaen.fr}

\begin{abstract} We prove a kind of reflection principle for certain non-archimedean $L$-series in positive characteristic. We also prove the pseudo-cyclicity and pseudo-nullity of certain several variable generalizations of the class modules introduced by L. Taelman in 2010. \end{abstract}
\date{ february, 2,  2015}
\maketitle
\tableofcontents
\section {Introduction}${}$\par
The existence of a functional equation for David Goss $L$-series (see \cite{GOS}, chapter 8 and  \cite{GOS4}) is an open  problem in the arithmetic theory of function fields over finite fields. A hint that such a functional identity should exist can be  found for example  in the statement of the  Spiegelungssatz for the Carlitz module (see \cite{ANG&TAE2}) that asserts that Taelman's class module (see \cite{TAE1}, \cite{TAE3}) and some isotypic components of the $p$-Sylow subgroup of the $\mathbb F_q$- rational points of some jacobians attached to the Carlitz module ($\mathbb F_q$ being a finite field of characteristic $p$ and having $q$ elements) are related by a kind of reflection principle similar to the classical Spiegelungssatz of Leopoldt (\cite{LEO}). Indeed, since the  work of L. Taelman (see \cite{TAE2}), it is known that the "size" of Taelman's class modules are related to the values at one  of David Goss $L$-series (see also \cite{APTR}, \cite{ANG&TAE1}, \cite{DEM}, \cite{FAN}), and, by the results of D. Goss and W. Sinnott (\cite{GOS&SIN}), the non-triviality of the isotypic components of the $p$-Sylow subgroup of the $\mathbb F_q$- rational points of some jacobians attached to the Carlitz module is connected to the values of the Carlitz-Goss zeta function at negative integers (see also \cite{ABBL}). In this article we show that there exists a kind of  reflection principle  for certain  $v$-adic $L$-series attached to $K:=\mathbb F_q(\theta),$ $\theta$ being an indeterminate over the finite field $\mathbb F_q,$  $v$ being  any place of $K.$\par
Let $s\geq 0$ be an integer and let $t_1, \ldots, t_s,z$ be $s+1$ indeterminates over $K.$ Let's consider the following power series:
$$L(t_1, \ldots, t_s;z)=\sum_{k\geq0}\sum_{a\in A_{+,k}}\frac{a(t_1)\cdots a(t_s)}{a}z^k \in K(t_1, \ldots, t_s)[[z]],$$
where $A_{+,k}$ denotes the monic elements in $A:=\mathbb F_q[\theta]$ of degree $k.$ Note that:
$$L(t_1, \ldots, t_s; z)=\prod_{P{\rm \, prime \, of\, }A}(1-\frac{P(t_1)\cdots P(t_s)z^{deg_{\theta}P}}{P})^{-1},$$
where by a prime of $A$ we mean a monic irreducible polynomial in $A.$ We observe that $L(t_1, \ldots, t_s;z)$ is of adelic nature, indeed $L(t_1, \ldots, t_s;z)$ defines an $\infty$-adic $s+1$-variable entire function (\cite{ANG&PEL1}, Proposition 6), and for every prime $P$ of $A,$ by Theorem \ref{Preltheorem2}, $(1-\frac{P(t_1)\cdots P(t_s)z^{deg_{\theta}P}}{P})L(t_1, \cdots, t_s; z)$ defines a $P$-adic $s+1$-variable entire function. Our aim in this paper is to study this adelic $L$-series at $z=1$ or its derivative (with respect to $z$) at $z=1$ in the case of a trivial zero at $z=1.$\par

We first recall the case of the place $\infty$ and $s=1.$ Set $t=t_1$ and let  $\mathbb C_{\infty}$ be the completion of a fixed algebraic closure of $K_{\infty}:=\mathbb F_q((\frac{1}{\theta})).$ F. Pellarin has introduced in \cite{PEL} the following entire function on $\mathbb C_{\infty}:$
$$L(t)=\sum_{k\geq 0}\sum_{a\in A_{+,k}}\frac{a(t)}{a}=L(t; z)\mid{z=1}.$$
This function interpolates  the values at  $1$  of  some  abelian $L$-series introduced by David Goss  (see \cite{GOS}, chapter 8) :
$$\forall \zeta\in \overline{\mathbb F_q}, L(t)\mid_{t=\zeta}=L(1, \chi):= \sum_{a\in A_+} \frac{\chi(a)}{a} \in \mathbb C_{\infty}^\times,$$
where $\chi$ is a certain  Dirichlet character attached to $\zeta$ and where $\overline{\mathbb F_q}$ is the algebraic closure of $\mathbb F_q$ in $\mathbb C_{\infty}.$ The function $L(t)$ and its $s$-variable generalization play an important  role in the arithmetic theory of function fields over finite fields (see \cite{ANG&PEL1}, \cite{ANG&PEL2}, \cite{APTR}, \cite{GOS2},  \cite{PER1}, \cite{PER2}, \cite{PER3}). Let's observe that $L(t)\in \mathbb T,$ where $\mathbb T$ is the Tate algebra in the variable $t$ with coefficients in $\mathbb C_{\infty}.$  Now let us fix a $q-1$th root of $-\theta$ in $\mathbb C_{\infty}:\, ^{q-1}\sqrt{-\theta},$ and  let $\omega(t)$ be the Anderson-Thakur special function (see \cite{AND&THA}, proof of Lemma 2.5.4), i. e. :
$$\omega(t)=^{q-1}\sqrt{-\theta} \prod_{j\geq 0}(1-\frac{t}{\theta^{q^j}})^{-1}\in \mathbb T^\times.$$
Let $\tau :\mathbb T\rightarrow \mathbb T$ be the continuous morphism of $\mathbb F_q[t]$-algebras such that $\forall x \in \mathbb C_{\infty}, \tau (x)=x^q.$ We then have the following remarkable formula obtained by F. Pellarin in \cite{PEL}:
\begin{eqnarray} L(t)\omega(t)=log_C(\omega(t))= \frac{\widetilde{\pi}}{\theta-t},\end{eqnarray}
where $log_C= 1+\sum_{j\geq 1} \frac{1}{\prod_{k=0}^{j-1} (\theta- \theta^{q^k})}\tau^j,$ and  $\widetilde {\pi}=^{q-1}\sqrt{-\theta} \theta \prod_{j\geq1}(1-\theta^{1-q^j})^{-1}\in \mathbb C_{\infty}^\times $ is the period of the Carlitz module. Our first goal in this article is to generalize the above formula to the case of  $s$-variable $v$-adic $L$-series  where $v$ is any place of $K$ and $s\geq 1$ is any integer.\par
In this introduction, we will only discuss about the Spiegelungssatz for $L$-series in the case of a prime $P$ of $A$ of degree $d\geq 1$ and $s=1,$ we refer the reader to sections 6 and 7 of this work for the general cases. Our idea is to compare two kinds of $P$-adic $L$-series : one that comes from $P$-adic Lubin-Tate theory and the other that comes from $P$-adic Carlitz theory (the prime $P$ is "viewed" as an infinite place as it will be explained below).\par
\noindent Let's discuss first the case of $P$-adic Lubin-Tate theory.  Let $K_P$ be the $P$-adic completion of $K,$ and  let $\mathbb C_P$ be the completion of a fixed algebraic closure of $K_P.$ Let $v_P:\mathbb C_P \rightarrow \mathbb  Q\cup\{ +\infty\}$ be the valuation on $\mathbb C_P$ normalized such that $v_P(P)=1.$ Let $t$ be an indeterminate over $\mathbb C_P.$ We recall that $(1-\frac{P(t)z^d}{P})L(t;z)$ defines an entire function on $\mathbb C_P^2.$ This $P$-adic function has a trivial zero at $z=1$ (Proposition \ref{Prelproposition1}). Therefore, we set:
$$L_P(t)= \sum_{k\geq 1} \sum_{a\in A_{+,k}, a\not \equiv 0\pmod{P}}k\frac{a(t)}{a}= \frac{d}{dz} (1-\frac{P(t)z^d}{P})L(t;z)\mid_{z=1}.$$  Let $\mathcal A$ be the  completion of $\mathbb C_P[t, \frac{1}{P(t)}]$ for the Gauss $P$-adic valuation on $\mathbb C_P[t, \frac{1}{P(t)}]$ still denoted by $v_P$ (see section 1). Observe that $L_P(t)\in \mathcal A.$  Let $\tau:\mathcal A\rightarrow \mathcal A$ be the continuous morphism of $\mathbb F_q[t, \frac{1}{P(t)}]$-algebras such that $\forall x\in \mathbb C_P,$ $\tau(x)=x^q.$ In section 5, we construct an analogue of Anderson-Thakur's function in this case, it is an element $\omega_P(t)\in \mathcal A$ that satisfies:
$$\tau (\omega_P(t))=(\frac{t-\theta}{t-\zeta_P})\omega_P(t),$$
where $\zeta_P \in \mathbb F_{q^d}\subset \mathbb C_P$ is such that $v_P(\theta-\zeta_P)>0.$  Furthermore, $\omega_P(t)$ interpolates $P$-adically the Gauss-Thakur sums (Theorem \ref{theorem3}). Let $C^{\dag}:A\rightarrow \mathcal A\{ \tau\}$ be the morphism of $\mathbb F_q$-algebras such that:
$$C^{\dag}_{\theta}= (t-\zeta_P)\tau +\theta.$$
Then one can associate to $C^{\dag}$ a logarithm $log_{C^{\dag}}\in \mathcal A\{ \{ \tau\}\},$ i.e. this is the unique element in $\mathcal A\{ \{ \tau\}\}$ such that $log_{C^{\dag}}\equiv 1\pmod{\tau}$ and :
$$log_{C^{\dag}} C^{\dag}_{\theta}= \theta log_{C^{\dag}}.$$
Let $\mathcal A^0=\{ x\in \mathcal A, v_P(x)\geq 0\}$ and $\mathcal A^{00}=\{x \in \mathcal A, v_P(x)>0\}.$ One can easily show that $log_{C^{\dag}}$ converges on $\mathcal A^{00},$ and using similar arguments than that used by Iwasawa for the classical $p$-adic logarithm, we can extend $log_{C^{\dag}}$ into a continuous morphism of $\mathbb F_q[t, \frac{1}{P(t)}]$-modules : $log_{C^{\dag}, P}: \mathcal A^0 \rightarrow \mathcal A.$
We have:
$$log_{C^{\dag}, P}(\omega_P(t))=0.$$
The function $\omega_P(t)$ plays the same role for $L_P(t)$ as that of $\omega(t)$ for $L(t).$ 
Let $C:A\rightarrow A\{ \tau \}$ be the Carlitz module, i.e. $C$ is a morphism of $\mathbb F_q$-algebras such that $C_{\theta}=\tau +\theta.$ Write:
$$C_P=\sum_{k=0}^d (P,k) \tau^k,$$
where $(P,k)\in A,$ $k=0, \ldots, d.$ Then we have an analogous formula as that of formula (1) (see the proof of Theorem \ref{theorem4}):
$$L_P(t)\omega_P(t)=\frac{1}{P}log_{C^{\dag},P}  ( \sum_{k=1}^{d} k(P,k)b_k(t) \omega_P(t)),$$
where $b_0(t)=1$ and for $j\geq 1:$ $b_j(t)=\prod_{k=0}^{j-1} (t-Ê\theta^{q^k}).$\par
\noindent Now, let's focus on the $P$-adic Carlitz theory for $L$-series. We notice that $v_P(\theta-\zeta_P)=1.$ Thus let's work with $\theta-\zeta_P$ instead of $P.$ Let's view $\theta-\zeta_P$ as an infinite prime of $\mathbb F_{q^d}(\theta).$ Let's consider a "twist" of Pellarin's $L$-series  with respect to this infinite prime:
$$\sum_{k\geq 0}\sum_{a\in \mathbb F_{q^d}[\frac{1}{\theta-\zeta_P}]_{+,k}} \frac{a(\frac{1}{t-\zeta_P})}{a(\frac{1}{\theta-\zeta_P})}\in \mathcal A.$$
Let's take its "norm over $\mathbb F_q$":
$$\mathcal L_P(t)= \prod_{j=0}^{d-1}\tau^j(\sum_{k\geq 0}\sum_{a\in \mathbb F_{q^d}[\theta]_{+,k}} \frac{a(\frac{1}{t-\zeta_P})}{a(\frac{1}{\theta-\zeta_P})})\in \mathcal A.$$
Then  $\mathcal L_P(t)$ is convergent  on $\mathbb C_P\setminus\{ \zeta_P, \ldots, \zeta_P^{q^{d-1}}\}.$ One can show that $\omega_P(t)$ plays again an important role for $\mathcal L_P(t)$ (see the proof of Theorem \ref{theorem4}):
$$ \frac{\mathcal L_P(t) \omega_P(t)}{\pi_P}=\frac{P(t)}{b_d(t)},$$
where $\pi_P= \prod_{i=0}^{d-1}\prod_{j\geq 1}(1- (\theta-\zeta_P)^{q^{dj+i}-q^i})^{-1}.$ Therefore, if we combine the formulas for $L_P(t)$ and $\mathcal L_P(t),$ we get a Spiegelungssatz for these $P$-adic $L$-series (Theorem \ref{theorem4}):
$$ \frac{b_d(t)\mathcal L_P(t)}{\pi_P}(  \frac{1}{P}log_{C^{\dag},P}  ( \sum_{k=1}^{d} k(P,k)b_k(t) \omega_P(t)) )                                      =  P(t)L_P(t)                                             ,$$
the above equality taking place in the $P$-adic affinoid algebra $\mathcal A.$\par

 A large part of this work  grew out of an attempt to prove the non-vanishing at one of certain $v$-adic $L$-series (Theorem \ref{theorem1}) without using Vincent Bosser's $v$-adic Baker-Brumer Theorem (see \cite{ANG&TAE1}). In order to do so, one of the main ingredients is to obtain explicit formulas for these $v$-adic values (Theorem \ref{theorem2}). The Spiegelungssatz  for $L$-series can be viewed as  a reinterpretation of such explicit formulas in certain $v$-adic affinoid algebras. The "Anderson-Stark units" (see \cite{APTR}, \cite{APTR2}) and their derivatives  play a key role in this article. Let $\tau= K(t_1, \ldots ,t_s)[[z]]\rightarrow K(t_1, \ldots ,t_s)[[z]]$ be the continuous morphism (for the $z$-adic topology) of $\mathbb F_q(t_1, \ldots, t_s)[[z]]$-algebras given by $\tau (x)=x^q $ for all $x \in K.$ Then there exists $u(t_1, \ldots, t_s; z)\in A[t_1, \ldots, t_s,z]$  such that (\cite{APTR}, Corollary 5.12):
 $$\exp_{s,z}(L(t_1, \ldots, t_s; z))= u(t_1, \ldots, t_s; z),$$
 where $\exp_{s,z}=1+\sum_{j\geq1}\frac{z^j\prod_{i=1}^s\prod_{k=0}^{j-1}(t_i-\theta^{q^k})}{\prod_{k=1}^{j}(\theta^{q^j}-\theta^{q^{j-k}})} \tau ^j.$  Then $u(t_1, \ldots, t_s;z)\mid_{z=1}$ or its derivative (with respect to $z$) at $z=1$  in the case $s\equiv 1\pmod{q-1}$ play a role analogous to  that of Stark units, i.e.  the special values of the $v$-adic $L$-series considered in this paper can be written as linear combinations of $v$-adic logarithms of some elements attached to these Anderson-Stark units and their derivatives (see for example Theorem \ref{theorem2}).  In the last section of this work, using  some properties of Anderson-Stark units and their derivatives, we give some new results on "generic class modules".\par
 In order to explain and motivate the content of the last section of this work, we first recall some facts from classical cyclotomic theory (we refer the reader to \cite{WAS}, chapters 13 and 15). Let $p\geq 3$ be an odd prime number and let $\Delta= {\rm Gal}(\mathbb Q(\mu_p)/\mathbb Q).$ Let $M$ be the projective limit for the norm map  of the $p$-Sylow subgroups of the ideal class groups along the cyclotomic $\mathbb Z_p$-extension of $\mathbb Q(\mu_p): \mathbb Q(\mu_{p^{\infty}}).$ Let $\Gamma={\rm Gal}( \mathbb Q(\mu_{p^{\infty}})/\mathbb Q(\mu_p)),$ and let $\gamma \in \Gamma$ such that:
 $$\forall \zeta \in \mu_{p^{\infty}},\, \gamma (\zeta)=\zeta^{1+p}.$$
 We can identify $\mathbb Z_p[[\Gamma]]$ with $\mathbb Z_p[[T]]$ by sending $\gamma$ to $1+T.$ Therefore $M$ is a $\mathbb Z_p[[T]][\Delta]$-module. Let $\chi \in \widehat {\Delta}:= {\rm Hom}(\Delta, \mathbb Z_p^\times),$ and let $e_{\chi}=\frac{1}{p-1}\sum_{\delta \in \Delta} \chi(\delta) \delta^{-1}\in \mathbb Z_p[\Delta].$ For $\chi \in \widehat{\Delta}, $ we set:
 $$M_{\chi}=e_{\chi}M.$$
 Then $M_{\chi}$ is a finitely generated and torsion $\mathbb Z_p[[T]]$-module. If $\chi$ is odd distinct from the Teichm\"uller character : $\omega_p,$ B. Mazur and A. Wiles proved that the characteristic ideal of the $\mathbb Z_p[[T]]$-module $M_{\chi}$ is generated by some polynomial $P_{\chi}(T)\in \mathbb Z_p[T]$ such that there exists $U_{\chi}(T)\in \mathbb Z_p[[T]]^\times$ with the property (\cite{MAZ}):
 $$\forall y \in \mathbb Z_p, \, P_{\chi}((1+p)^y-1) U_{\chi}((1+p)^y-1)=L_p(y, \omega_p\chi^{-1}),$$
 where $L_p(.,\omega_p\chi^{-1})$ is the $p$-adic $L$-function attached to the even character $\omega_p\chi^{-1}.$ Observe that if Vandiver's Conjecture is true for the prime $p,$ then for $\chi$ odd, $M_{\chi}$ is a cyclic $\mathbb Z_p[[T]]$-module of for $\chi$ even: $M_{\chi}=\{0\}.$ R. Greenberg has conjectured  weaker statements  (see \cite{GRE}):\par
 \noindent {\sl Pseudo-cyclicity Conjecture:} If $\chi$ is odd then there exists an injective  morphism of $\mathbb Z_p[[T]]$-modules between a cyclic $\mathbb Z_p[[T]]$-module and $M_{\chi}$  such that the cokernel of this morphism is finite;\par
 \noindent {\sl Pseudo-nullity Conjecture:} If $\chi$ is even then $M_{\chi}$ is finite.\par
 \noindent These two conjectures are  open.\par
 \noindent Now, we turn ourselves to the function field case. Let $s\geq 1$ be an integer and let $t_1, \ldots,t_s$ be $s$ variables over $ K_{\infty}.$ Let $\mathbb T_s(K_{\infty})$ be the Tate algebra in the variables $t_1, \ldots, t_s$ with coefficients in $K_{\infty}.$ Let $\tau: \mathbb T_s(K_{\infty})\rightarrow \mathbb T_s(K_{\infty})$ be the continuous morphism of $\mathbb F_q[t_1, \ldots, t_s]$-algebras such that $\forall x\in K_{\infty}, \tau (x)=x^q.$ Let $\phi: A\rightarrow A[t_1, \ldots, t_s]\{\tau\}$ be the morphism of $\mathbb F_q$-algebras such that:
 $$\phi_{\theta}= (t_1-\theta)\cdots (t_s-\theta)\tau +\theta.$$
 \noindent Let $\exp_{\phi}$ be the exponential function attached to $\phi,$ i.e. $\exp_{\phi}$ is the unique element in $\mathbb T_s(K_{\infty})\{ \{\tau\}\}$ such that $\exp_{\phi} \equiv 1\pmod{\tau}$ and:
 $$\exp_{\phi} \theta= \phi_{\theta} \exp_{\phi}.$$
 Then inspired by Taelman's work (\cite{TAE1}, \cite{TAE2}, \cite{TAE3}), one can introduce a "generic class module" of level $s$ (see \cite{APTR}):
 $$H_s:= \frac{\mathbb T_s(K_{\infty})                       }{ A[t_1, \ldots, t_s]+\exp_{\phi}(\mathbb T_s(K_{\infty}))                          }.$$
 Observe that $H_s$ is an $A[t_1, \ldots, t_s]$-module via $\phi,$ furthermore it is a finitely generated $\mathbb F_q[t_1, \ldots, t_s]$-module and  a torsion $A[t_1, \ldots,t_s]$-module (we refer the reader to section 8 for more details). The terminology "generic class module" comes from the fact that the evaluations of elements of $\mathbb T_s(K_{\infty})$ on elements of $\overline{\mathbb F_q}^s$ ($\overline{\mathbb F_q}$ being the algebraic closure of $\mathbb F_q$ in $\mathbb C_{\infty}$) induce surjective morphisms from $H_s$ to certain isotypic components of Taelman's class modules. These generic class modules can be viewed as discrete analogues of the Iwasawa modules discussed above, having in mind that the role of $\mathbb Z_p$ is played by $\mathbb F_q[t_1, \ldots, t_s],$ the role of $T$ is played by $\phi_{\theta}.$  For example Mazur-Wiles Theorem has an analogue in this situation, let $s\geq q, $ $s\equiv 1\pmod{q-1}$ (this is the analogue of the condition $\chi$ odd, $\chi \not =\omega_p,$ for Iwasawa modules), then there exists $\mathbb B(t_1, \ldots, t_s)\in A[t_1, \ldots, t_s]$ which is a monic polynomial in $\theta,$ such that the Fitting ideal of the $A[t_1, \ldots, t_s]$-module $H_s$ is generated by $\mathbb B(t_1, \ldots, t_s)$ and (\cite{APTR}, Theorem 7.7):
 $$\mathbb (-1)^{\frac{s-1}{q-1}}\mathbb B(t_1, \ldots, t_s)\frac{\widetilde{\pi}}{\omega(t_1)\cdots \omega(t_s)}= L(t_1, \ldots, t_s),$$
 where $L(t_1, \ldots, t_s)= L(t_1, \ldots, t_s; z)\mid_{z=1}\in \mathbb T_s(K_{\infty}).$ The reader can now easily guess what are the discrete analogues of Greenberg 's Conjectures in our situation. We prove these analogues in section 8:\par
 \noindent {\sl Pseudo-cyclicity:} let $s\geq 2, $ $s\equiv 1\pmod{q-1},$ there exists an injective  morphism of $A[t_1, \ldots, t_s]$-modules between a cyclic $A[t_1, \ldots, t_s]$-module and $H_s$  such that the cokernel of this morphism is a finitely generated and torsion $\mathbb F_q[t_1, \ldots, t_s]$-module (Corollary \ref{ATRcorollary1});\par
 \noindent {\sl Pseudo-nullity:}   let $s\geq 2, $ $s\not \equiv 1\pmod{q-1},$ then $H_s$ is a finitely generated and torsion $\mathbb F_q[t_1, \ldots, t_s]$-modules (Proposition \ref{ATRproposition3} and Theorem \ref{ATRtheorem3}).\par
 \noindent These latter two results have interesting consequences on the behaviour of the isotypic components of Taelman's class modules for the Carlitz module. More precisely, let $s\geq 2$ be an integer, let $\chi$ be a Dirichlet character of type $s$ and conductor $f$ (see section 1). There exist $\zeta_1(\chi), \ldots, \zeta_s(\chi)\in \overline{\mathbb F_q}$ such that:
 $$\sigma (g(\chi))=(\zeta_1(\chi)-\theta)\cdots (\zeta_s(\chi)-\theta) g(\chi),$$
 where $g(\chi)$ is the Gauss-Thakur sum associated to $\chi,$ $\sigma =\tau \otimes 1$ on $O_f\otimes_{\mathbb F_q}\mathbb F_q(\zeta_1(\chi), \ldots, \zeta_s(\chi)),$ where $O_f$ is the ring of integers of the $f$th cyclotomic function field. Let $\exp_C$ be the Carlitz exponential (see \cite{GOS}, chapter 3), and let's consider the $\chi$-isotypic component of Taelman's class module (see \cite{APTR} section 9.3, \cite{ANG&TAE1} and \cite{TAE1})  :
 $$H_{\chi}:=e_{\chi}(\frac{{\rm Frac}(O_f)\otimes_{K}K_{\infty}}{O_F+\exp_C({\rm Frac}(O_f)\otimes_{K}K_{\infty})}\otimes_{\mathbb F_q}\mathbb F_q(\zeta_1(\chi), \ldots, \zeta_s(\chi))),$$
 where $e_{\chi}\in \mathbb F_q(\zeta_1(\chi), \ldots, \zeta_s(\chi))[{\rm Gal}({\rm Frac}(O_f)/K)]$ is the usual idempotent associated to $\chi.$ We prove  that there exists $F(t_1, \ldots,t_s)\in \mathbb F_q[t_1, \ldots, t_s]\setminus \{0\}$  (depending on the module structure of $H_s$) such that for every Dirichlet character $\chi$ of type $s$ verifying $F(\zeta_1(\chi), \ldots , F(\zeta_s(\chi))\not =0,$ then, if $s\equiv 1\pmod{q-1},$ $H_{\chi}$ is a cyclic $A[\zeta_1(\chi), \ldots, \zeta_s(\chi)]$-module generated as a $A[\zeta_1(\chi), \ldots, \zeta_s(\chi)]$-module by $\frac{g(\chi)}{\theta^{\frac{s-q}{q-1}}}$ (Theorem \ref{ATRtheorem2}) and if $s\not \equiv 1\pmod{q-1}$ we have $H_{\chi}=\{ 0\}$  (Corollary \ref{ATRcorollary2}). 

${}$\par
The authors thank David Goss and Federico Pellarin for comments on an earlier version of this work that enable us to improve the content of this article. The authors are indebted to  David Goss for fruitful discussions that led us to find explicit formulas  for the value at one of  $v$-adic Dirichlet $L$-series for finite places $v$ of $K.$\par


\section{Preliminaries}
\subsection{Background on entire functions}${}$\par
Let $F$ be a local field of characteristic $p$ containing a finite field $\mathbb F_q$ having $q$ elements. Let $O_F$ be the valuation ring of $F,$ and let $\pi_F$ be a generator of the maximal ideal of $O_F.$ Let $v:F\rightarrow \mathbb Z\cup\{ +\infty\}$ be the discrete valuation on $F$  such that $v(\pi_F)=1.$  Let $\mathbb C_F$ be the completion of a fixed algebraic closure $\overline{F}$ of $F,$ and let $v:\mathbb C_F\rightarrow \mathbb Q\cup \{ +\infty\}$ be the valuation on $\mathbb C_F$ such that $v(\pi_F)=1.$ Let $\tau :\mathbb C_F\rightarrow \mathbb C_F, x\mapsto x^q.$\par
Let $n\geq 1$ be an integer,  let $t_1, \ldots,t_n$ be $n$ indeterminates over $\mathbb C_F.$ We set $\mathbb C_F[\underline{t}_n]= \mathbb C_F[t_1, \ldots, t_n],$ and if $\underline{i}=(i_1,\ldots, i_n)\in \mathbb N^n,$ we set $\underline{t}_n^{\underline{i}}= t_1^{i_1}\cdots t_n^{i_n}$ and $\mid \underline{i}\mid = i_n +\cdots+i_n.$  Let $f\in \mathbb C_F[\underline{t}_n],$ $f=\sum_{\underline{i}\in \mathbb N^n}f_{\underline{i}}\underline{t}^{\underline{i}},$ $f_{\underline{i}}\in \mathbb C_F.$ We set :
$$v(f)={\rm Inf}\{ v( f_{\underline{i}}), \underline{i}\in \mathbb N^n\}.$$
Then $v$ induces a valuation on $\mathbb C_F(\underline{t}_n)$ and we call it the $v$-adic  Gauss valuation on $\mathbb C_F(\underline{t}_n).$\par

\noindent Let $\mathbb T_n$ be the Tate algebra in the variables $t_1, \ldots,t_n$ with coefficients in $\mathbb C_F$ (see \cite{PUT}, chapter 3), i.e. $\mathbb T_n$ is the completion of $\mathbb C_F[\underline{t}_n]$ for the $v$-adic Gauss valuation.  Then $\mathbb T_n$ is the set of elements $f=\sum_{\underline{i}\in \mathbb N^n}f_{\underline{i}}\underline{t}_n^{\underline{i}} \in \mathbb C_F[[\underline{t}_n]], $ $f_{\underline{i}}\in \mathbb C_F$ for all $\underline{i}\in \mathbb N^n,$ such that :
$$\lim_{\mid \underline{i}\mid \rightarrow +\infty}v(f_{\underline{i}})=+\infty .$$
More generally, for $r\in \mathbb Q,$ we define $\mathbb T_{n,r}$ to be the set of elements $f=\sum_{\underline{i}\in \mathbb N^n}f_{\underline{i}}\underline{t}_n^{\underline{i}} \in \mathbb C_F[[\underline{t}_n]], $ $f_{\underline{i}}\in \mathbb C_F$ for all $\underline{i}\in \mathbb N^n,$ such that :
$$\lim_{\mid \underline{i}\mid \rightarrow +\infty}v(f_{\underline{i}})+r\mid \underline{i}\mid =+\infty .$$
Thus $\mathbb T_{n,0}=\mathbb T_n,$ and $\mathbb T_{n,r}$ is  the set of elements $f\in \mathbb C_F[[\underline{t}_n]]$ that converge on $\{ \underline{\zeta}\in \mathbb C_F^n, v(\underline{\zeta})\geq r\}.$ Observe that if $r,r'\in \mathbb Q,$ if $r'\leq r$ then $\mathbb T_{n,r'}\subset \mathbb T_{n,r}.$    For $f=\sum_{\underline{i}\in \mathbb N^n}f_{\underline{i}}\underline{t}_n^{\underline{i}} \in \mathbb T_{n,r}, $ $f_{\underline{i}}\in \mathbb C_F,$ we set :
$$v_{r}(f)={\rm Inf}\{ v(f_{\underline{i}})+r\mid \underline{i} \mid, \underline{i}\in \mathbb N^n\},$$
and $v=v_{0}.$ For $r\in \mathbb Q,$ we set:
$$\mathbb T_{n,r}(F)= \mathbb T_{n,r}\cap F[[\underline{t}_n]].$$
Let $r\in \mathbb Q$ and let $\zeta\in \mathbb C_F^*$ such that $v(\zeta)=-r.$ Then the map :
$$ \psi : \mathbb T_{n,r}\rightarrow \mathbb T_n, f(\underline{t}_n)\mapsto f(t_1\zeta, \ldots , t_n\zeta),$$
is an isomorphism of $\mathbb C_F$-algebras such that :
$$v(\psi(f))=v_{r}(f).$$
Let $\tau :\mathbb C_F[[\underline{t}_n]]\rightarrow \mathbb C_F[[\underline{t}_n]]$ be the isomorphism of $\mathbb F_q[[\underline{t}_n]]$-algebras such that :
$$\tau(\sum_{\underline{i}\in \mathbb N^n}f_{\underline{i}}\underline{t}_n^{\underline{i}})=
\sum_{\underline{i}\in \mathbb N^n}f_{\underline{i}}^q\underline{t}_n^{\underline{i}}, \, f_{\underline{i}}\in \mathbb C_F.$$
Observe that,  for $r\in \mathbb Q,$ we have :
$$\tau(\mathbb T_{n,r})=\mathbb T_{n,qr}.$$
We set :
$$\mathbb E_n=\cap_{r\in \mathbb Q}\mathbb T_{n,r}\subset \mathbb T_n,$$
and :
$$\mathbb E_n(F)=\mathbb E_n\cap F[[\underline{t}_n]].$$
Then $\mathbb E_n$ is the set of elements $f=\sum_{\underline{i}\in \mathbb N^n}f_{\underline{i}}\underline{t}_n^{\underline{i}} \in \mathbb C_F[[\underline{t}_n]], $ $f_{\underline{i}}\in \mathbb C_F$ for all $\underline{i}\in \mathbb N^n,$ such that :
$$\lim_{\mid \underline{i}\mid \rightarrow +\infty}\frac{v(f_{\underline{i}})}{\mid \underline{i}\mid}=+\infty .$$
We call the $\mathbb C_F$-algebra $\mathbb E_n$ {\em the algebra of  entire functions} on $\mathbb C_F^n.$ Observe that $\mathbb C_{\infty}[\underline{t}_n]\subsetneq \mathbb E_n \subsetneq \mathbb T_n,$ and  $\tau(\mathbb E_n)=\mathbb E_n.$
Set $\mathbb E_0=\mathbb C_F.$ Let $n\geq 1.$ Observe that $\mathbb E_n$ is the set of elements 
$f=\sum_{j\geq 0} f_j t_n^j\in \mathbb E_{n-1}[[t_n]], f_j \in \mathbb E_{n-1},$ such that :
$$\lim_{j\rightarrow +\infty} \frac{v(f_j)}{j}=+\infty.$$
We recall the following basic result :

\begin{lemma}\label{ATlemma1}${}$\par
\noindent Let $S\subset \mathbb C_F$ be a bounded  infinite set, i.e. there exists $M\in \mathbb R$ such that $\forall x\in S, v(x)\geq M.$ Let $f\in \mathbb E_n$ that vanishes on $S^n,$ then $f=0.$\par
\end{lemma}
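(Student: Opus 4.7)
The plan is to prove this by induction on $n$, reducing to a statement about zeros of one-variable $p$-adic entire functions.

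\smallskip

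For the base case $n=1$, suppose $f\in\mathbb E_1$ vanishes on an infinite set $S\subset\mathbb C_F$ bounded by $v(x)\geq M$ for all $x\in S$. Since $\mathbb E_1\subset\mathbb T_{1,M}$, the function $f$ belongs to the Tate algebra associated to the disk $\{x\in\mathbb C_F : v(x)\geq M\}$. A standard Weierstrass-preparation/Newton-polygon argument shows that a nonzero element of $\mathbb T_{1,M}$ has only finitely many zeros in this disk; since $S$ provides infinitely many zeros there, we must have $f=0$.

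\smallskip

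For the inductive step, assume the statement for $n-1$ and let $f\in\mathbb E_n$ vanish on $S^n$. Write $f=\sum_{j\geq 0} f_j(t_1,\dots,t_{n-1})\,t_n^j$ with $f_j\in\mathbb E_{n-1}$. Fix $\underline s=(s_1,\dots,s_{n-1})\in S^{n-1}$, so $v(s_i)\geq M$ for each $i$. I would first check that the specialization $g(t_n):=f(\underline s,t_n)=\sum_j f_j(\underline s)\,t_n^j$ lies in $\mathbb E_1$. For any rational $r\leq M$, the hypothesis $f\in\mathbb T_{n,r}$ gives $v(f_{\underline i,j})+r(|\underline i|+j)\to+\infty$ as $|\underline i|+j\to\infty$; combined with the bound $v(\underline s^{\underline i})\geq r|\underline i|$, this yields $v(f_j(\underline s))+rj\to+\infty$, so $g\in\mathbb T_{1,r}$. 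Since $r$ is arbitrary, $g\in\bigcap_r\mathbb T_{1,r}=\mathbb E_1$.

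\smallskip

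Because $g$ vanishes on the infinite bounded set $S$, the base case forces $g=0$, hence $f_j(\underline s)=0$ for every $j\geq 0$ and every $\underline s\in S^{n-1}$. Applying the inductive hypothesis to each $f_j\in\mathbb E_{n-1}$, which vanishes on the infinite bounded set $S^{n-1}$, gives $f_j=0$ for all $j$, so $f=0$.

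\smallskip

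The main obstacle is really the base case: one needs the one-variable non-archimedean fact that an entire function has a locally finite zero set, which rests on Newton-polygon analysis of $\mathbb T_{1,M}$. The inductive step itself is essentially bookkeeping, with the only genuine check being that specialization at a bounded tuple preserves membership in $\mathbb E_1$, which follows cleanly from the intersection definition of $\mathbb E_n$.
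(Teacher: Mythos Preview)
Your proof is correct and follows essentially the same inductive strategy as the paper: reduce to the one-variable case by specializing the first $n-1$ variables at points of $S^{n-1}$, apply the one-variable result to kill the resulting series in $t_n$, and then invoke the inductive hypothesis on each coefficient $f_j$. The paper simply cites \cite{GOS}, Proposition~2.13 for the base case and does not spell out the verification that the specialization lies in $\mathbb E_1$, so your write-up is in fact slightly more detailed than the original.
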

\begin{proof}  We prove the assertion by induction on $n\geq 1.$  The result is true for $n=1$ by \cite{GOS}, proposition 2.13. Now, let $n\geq 2$ and assume that the result is proved for $n-1.$ Let $f\in \mathbb E_n.$ Write :
$$f=\sum_{j\geq 0} f_j t_n^j , \, f_j \in \mathbb E_{n-1}, j\geq 0.$$
Let $\zeta_1, \ldots, \zeta_{n-1}\in S^{n-1}$ and set $g(t_n)=f(\zeta_1, \ldots, \zeta_{n-1}, t_n)\in \mathbb C_F[[t_n]].$ Then $g$ is an entire function which  vanishes on $S.$ Thus $g=0.$ Therefore  $f=0$ by the induction hypothesis. \end{proof}
\begin{lemma}\label{ATlemma2}
Let r $\in \mathbb Q, r\leq 0.$ Let $F=\sum_{i\geq0}F_i\tau ^i \in \mathbb T_{n,r}\{ \{ \tau\}\}, $ $F_i \in \mathbb T_{n,r},$ such that:
$$\lim_{i\rightarrow +\infty} \frac{v_{r}(F_i)}{q^i}=+\infty.$$
Then $F(\mathbb T_{n,r})\subset \mathbb T_{n,r}.$
\end{lemma}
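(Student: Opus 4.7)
The plan is to show that for any $f \in \mathbb{T}_{n,r}$ the series $F(f) = \sum_{i\geq 0} F_i\,\tau^i(f)$ converges in $\mathbb{T}_{n,r}$. The two ingredients are (i) completeness of $(\mathbb{T}_{n,r}, v_r)$, and (ii) the growth estimate $v_r(\tau^i(f)) \geq q^i v_r(f)$, in which the hypothesis $r \leq 0$ is used in an essential way.

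First I would note that $(\mathbb{T}_{n,r}, v_r)$ is complete: the substitution map $\psi$ from the excerpt, $f(\underline{t}_n) \mapsto f(t_1\zeta, \ldots, t_n\zeta)$ with $v(\zeta) = -r$, is an isometric $\mathbb{C}_F$-algebra isomorphism $\mathbb{T}_{n,r} \to \mathbb{T}_n$, and $\mathbb{T}_n$ is complete by construction.

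Second, I would prove the key estimate. Writing $f = \sum f_{\underline{j}}\,\underline{t}_n^{\underline{j}}$, the definition of $v_r$ gives $v(f_{\underline{j}}) \geq v_r(f) - r|\underline{j}|$ for every $\underline{j}$. Multiplying by $q^i > 0$ and adding $r|\underline{j}|$ yields
\[
q^i v(f_{\underline{j}}) + r|\underline{j}| \;\geq\; q^i v_r(f) + r|\underline{j}|\bigl(1 - q^i\bigr).
\]
Since $r \leq 0$ and $1 - q^i \leq 0$, the product $r(1-q^i)$ is nonnegative, so the right-hand side is bounded below by $q^i v_r(f)$. Since $\tau^i(f) = \sum f_{\underline{j}}^{q^i}\,\underline{t}_n^{\underline{j}}$ and $v(f_{\underline{j}}^{q^i}) = q^i v(f_{\underline{j}})$, taking the infimum over $\underline{j}$ gives $v_r(\tau^i(f)) \geq q^i v_r(f)$, which in particular shows $\tau^i(f) \in \mathbb{T}_{n,r}$.

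Third, combining with the ultrametric product estimate,
\[
v_r\bigl(F_i\,\tau^i(f)\bigr) \;\geq\; v_r(F_i) + q^i v_r(f) \;=\; q^i\!\left(\frac{v_r(F_i)}{q^i} + v_r(f)\right).
\]
By the hypothesis $v_r(F_i)/q^i \to +\infty$, the right-hand side tends to $+\infty$, so the general term $F_i\,\tau^i(f)$ tends to $0$ in $(\mathbb{T}_{n,r}, v_r)$. By completeness the partial sums converge, so $F(f) \in \mathbb{T}_{n,r}$.

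The only subtle point is the sign analysis showing $v_r(\tau^i(f)) \geq q^i v_r(f)$: here the hypothesis $r \leq 0$ is used together with $q \geq 2$ to ensure $r(1 - q^i) \geq 0$. (If $r > 0$ the estimate fails in general, since negative $v(f_{\underline{j}})$ get amplified by $q^i$ faster than $r|\underline{j}|$ can compensate.) Everything else is formal manipulation with the Gauss valuation and completeness of the Tate algebra.
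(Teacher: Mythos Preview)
Your proof is correct and follows exactly the same approach as the paper's own proof: completeness of $(\mathbb T_{n,r}, v_r)$, the estimate $v_r(\tau^i(f)) \geq q^i v_r(f)$ (where $r\leq 0$ is used), and the resulting convergence of the series. You in fact supply more detail than the paper does, particularly in justifying the key inequality and the role of the sign of $r$.
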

\begin{proof} Observe that $v_{r}$ is a valuation on $\mathbb T_{n,r}$ and that $\mathbb T_{n,r}$ is complete with respect to the topology defined by this valuation. Let $f\in \mathbb T_{n,r}$ then:
$$\forall k\geq 0, v_{r}(\tau^k(f))\geq q^kv_{r}(f).$$
Thus:
$$\lim_{k\rightarrow +\infty} v_{r}(F_k\tau^k(f))= +\infty.$$
\end{proof}
Let $T\in \mathbb C_F$ such that $v(T)=-1.$ We set $A_T=\mathbb F_q[T],$ $D_{0,T}=1$ and for $i\geq 1:$ $D_{i,T}= (T^{q^i}-T)D_{i-1,T}^q.$
\begin{proposition}\label{ATproposition1}
For all $i\geq 0, $ let $a_i\in \mathbb C_F[\underline {t}_n]$ such that there exists a constant $c\leq 0$ with $\forall i\geq 0, $ $v(a_i)\geq cq^i,$ and there exists a constant $c'\geq 0$ such that the total degree (in $\underline{t}_n$) of $a_i$ is bounded by $q^ic'.$ Let $F=\sum_{i\geq 0} \frac{a_i}{D_{i,T}} \tau^i \in \mathbb E_n\{\{ \tau \}\}.$  \par
\noindent i) $\forall r\in \mathbb Q, r\leq 0,$ $F(\mathbb T_{n,r})\subset \mathbb T_{n,r}.$ In particular $F(\mathbb E_n)\subset \mathbb E_n.$\par
\noindent ii) Let $r<0.$ $\{ f\in  \mathbb T_{n,r}, F(f)\in \mathbb E_n\}= \mathbb E_n.$\end{proposition}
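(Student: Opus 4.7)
The plan is to prove (i) as an immediate application of Lemma \ref{ATlemma2}, and to attack (ii) by an inductive bootstrap on the sequence of radii $q^kr$, reducing the problem to a multiplication-by-polynomial step.

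\textbf{Part (i).} The first step is to compute $v(D_{i,T})$. From the recursion $D_{i,T}=(T^{q^i}-T)D_{i-1,T}^q$ together with $v(T)=-1$, one has $v(T^{q^i}-T)=-q^i$ for $i\geq 1$, and induction yields $v(D_{i,T})=-iq^i$. Next, using $v(a_i)\geq cq^i$, $\deg_{\underline t}(a_i)\leq c'q^i$, together with $r\leq 0$, one gets $v_r(a_i)\geq(c+rc')q^i$. Combining,
$$\frac{v_r(a_i/D_{i,T})}{q^i}\;\geq\;c+rc'+i\;\longrightarrow\;+\infty.$$
Lemma \ref{ATlemma2} then gives $F(\mathbb T_{n,r})\subset\mathbb T_{n,r}$ for every $r\leq 0$, and the "in particular" assertion follows from $\mathbb E_n=\bigcap_r\mathbb T_{n,r}$.

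\textbf{Part (ii), strategy.} The inclusion $\mathbb E_n\subset\{f\in\mathbb T_{n,r},F(f)\in\mathbb E_n\}$ is immediate from (i). For the converse, given $f\in\mathbb T_{n,r}$ with $F(f)\in\mathbb E_n$, I would prove by induction on $k\geq 0$ that $f\in\mathbb T_{n,q^kr}$; since $\mathbb E_n=\bigcap_k\mathbb T_{n,q^kr}$, this yields $f\in\mathbb E_n$. The inductive step $k\to k+1$ rewrites
$$a_0f\;=\;F(f)-\sum_{i\geq 1}\frac{a_i}{D_{i,T}}\tau^i(f).$$
For $i\geq 1$, $\tau^i(f)\in\tau^i(\mathbb T_{n,q^kr})=\mathbb T_{n,q^{i+k}r}\subset\mathbb T_{n,q^{k+1}r}$, and the same valuation estimate as in (i) (applied now at the level $r'=q^{k+1}r$) shows that the tail series converges in $\mathbb T_{n,q^{k+1}r}$. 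Together with $F(f)\in\mathbb E_n\subset\mathbb T_{n,q^{k+1}r}$, this gives $a_0f\in\mathbb T_{n,q^{k+1}r}$.

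\textbf{The main obstacle.} The remaining step — deducing $f\in\mathbb T_{n,q^{k+1}r}$ from $a_0f\in\mathbb T_{n,q^{k+1}r}$ and $f\in\mathbb T_{n,q^kr}$ — is the analytic crux. I expect this to require a Weierstrass-preparation-type reduction: because $f$ is already holomorphic on the polydisk $\{v(\underline\zeta)\geq q^kr\}$, the product $a_0f$ must vanish at every zero of $a_0$ lying in this smaller polydisk, and a factorization $a_0=u\cdot P$ (with $u$ a unit in $\mathbb T_{n,q^{k+1}r}$ and $P$ a distinguished polynomial whose zeros lie in $\{v(\underline\zeta)\geq q^kr\}$) allows one to verify that $f=(a_0f)/a_0$ acquires no new singularity on the enlarged polydisk, hence lies in $\mathbb T_{n,q^{k+1}r}$. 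In one variable this factorization is explicit via the expansion of $(t-\alpha)^{-1}$ together with the vanishing $(a_0f)(\alpha)=0$; in the multi-variable setting it requires Weierstrass preparation in the Tate algebras and an induction on $n$. This step is the principal technical difficulty of the proposition.
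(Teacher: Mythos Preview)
Part (i) and the inductive scaffolding for (ii) are correct and coincide with the paper's argument (the paper phrases the bootstrap as a contradiction: pick $r'\leq r$ with $f\in\mathbb T_{n,r'}\setminus\mathbb T_{n,qr'}$, then show $f\in\mathbb T_{n,qr'}$).

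Where you diverge is in treating the passage from $a_0f\in\mathbb T_{n,q^{k+1}r}$ to $f\in\mathbb T_{n,q^{k+1}r}$ as the analytic crux, to be handled by Weierstrass preparation. The paper simply writes $f=F(f)-\sum_{i\geq1}F_i\tau^i(f)$, i.e.\ it is implicitly taking $a_0=1$ (or at least $a_0\in\mathbb C_F^\times$), which is the only case used downstream (e.g.\ $F=\exp_\phi$ in Proposition~\ref{ATRproposition1}). With that reading the division step is vacuous and the proof is one line. Your Weierstrass route cannot succeed in the stated generality, because part (ii) is \emph{false} for non-constant $a_0$: take $n=1$, $r<0$, choose $\alpha\in\mathbb C_F$ with $v(\alpha)=2r$, set $a_0=t-\alpha$ and $a_i=0$ for $i\geq1$; then $f=-\sum_{k\geq0}\alpha^{-k-1}t^k=1/(t-\alpha)$ lies in $\mathbb T_{1,r}$ (since $-2r(k+1)+rk=-rk-2r\to+\infty$) and $F(f)=a_0f=1\in\mathbb E_1$, yet $f\notin\mathbb E_1$. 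Here $\alpha$ lies in the annulus $\{v\geq qr\}\setminus\{v\geq r\}$, so the factorization $a_0=u\cdot P$ you posit (with $u\in\mathbb T_{1,qr}^\times$ and all zeros of $P$ in $\{v\geq r\}$) does not exist. Read the proposition with the tacit hypothesis $a_0\in\mathbb C_F^\times$ and your proof of (ii) is already complete.
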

\begin{proof}  For $i\geq 0,$ we set :
$$F_i=\frac{a_i}{D_{i,T}}.$$ Let $r\leq 0.$ We have:
$$v_{r}(F_i) \geq iq^i +cq^i +rq^ic'.$$ 
Therefore :
$$\lim_{i\rightarrow +\infty} \frac{v_{r}(F_i)}{q^i}=+\infty.$$
Thus, by Lemma \ref{ATlemma2}, for all $r\leq 0,$ $F(\mathbb T_{s,r})\subset \mathbb T_{s,r}.$\par
\noindent Let $r\in \mathbb Q, r<0.$  Let $f\in \mathbb T_{n,r} $  such that $F(f)\in \mathbb E_n.$ We have to prove that  $f\in \mathbb E_n.$ Assume that this is not the case. Then we could find $r'<r<0$ such that $f\in \mathbb T_{n,r'}\setminus \mathbb T_{n, qr'}.$ But observe that :
$$\sum_{i\geq 1}F_i\tau^i(f) \in \mathbb T_{n,qr'}.$$
Thus, we would get :
$$f= F(f)-\sum_{i\geq 1}F_i\tau^i(f) \in \mathbb T_{n,qr'},$$
which is absurd.
\end{proof}
\noindent We finish this section by the following Lemma which will be used in the sequel :
\begin{lemma}\label{ATlemma3}
Let $a\in A_T[\underline{t}_n]$ which is monic as a polynomial in $T.$ Then, there exists $r\in \mathbb Q, r<0,$ such that $\frac{1}{a}\in \mathbb T_{n,r}.$
\end{lemma}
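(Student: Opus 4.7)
Since $a \in A_T[\underline{t}_n]=\mathbb F_q[T,\underline{t}_n]$ is monic of some degree $d$ in $T$, I would first expand
$$a = T^d + c_{d-1}(\underline{t}_n)T^{d-1} + \cdots + c_0(\underline{t}_n), \qquad c_i \in \mathbb F_q[\underline{t}_n],$$
and reduce the trivial cases: if $d=0$, then $a=1$; if $d\geq 1$ but all $c_i=0$, then $1/a=T^{-d}\in \mathbb C_F$; and if $d\geq 1$ with all $c_i \in \mathbb F_q$, then $1/a\in F$. In each of these cases $1/a$ is a constant power series in $\underline{t}_n$ and thus belongs to every $\mathbb T_{n,r}$.

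In the remaining case, set $D = \max\{\deg_{\underline{t}_n} c_i : 0\leq i \leq d-1\}\geq 1$, factor $a = T^d(1+b)$ with
$$b = c_{d-1}(\underline{t}_n)T^{-1} + c_{d-2}(\underline{t}_n)T^{-2}+\cdots+c_0(\underline{t}_n)T^{-d},$$
and expand $1/a = T^{-d}\sum_{k\geq 0}(-b)^k$ formally. The two key observations are: (i) every coefficient (in $\underline{t}_n$) of $b$ has the form $\lambda T^{-i}$ with $\lambda\in \mathbb F_q$ and $1\leq i\leq d$, hence has $v$-valuation $\geq 1$; (ii) the total $\underline{t}_n$-degree of $b$ is $\leq D$. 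Multiplicativity of $v$ and additivity of degree under multiplication then give that for each $k\geq 0$, every coefficient of $(-b)^k$ has $v$-valuation $\geq k$ and its $\underline{t}_n$-degree is $\leq kD$.

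Writing $1/a = \sum_{\underline{j}\in \mathbb N^n} f_{\underline{j}}\underline{t}_n^{\underline{j}}$, the coefficient $f_{\underline{j}}$ is $T^{-d}$ times a convergent sum of contributions from $(-b)^k$ with $kD\geq |\underline{j}|$, so by the ultrametric inequality
$$v(f_{\underline{j}}) \geq d + \Bigl\lceil \frac{|\underline{j}|}{D}\Bigr\rceil \geq \frac{|\underline{j}|}{D}.$$
Consequently, for any rational $r$ with $-1/D < r < 0$ (say $r = -1/(2D)$),
$$v(f_{\underline{j}}) + r|\underline{j}| \geq \Bigl(\frac{1}{D}+r\Bigr)|\underline{j}| \longrightarrow +\infty$$
as $|\underline{j}|\to +\infty$, which is precisely the condition for $1/a \in \mathbb T_{n,r}$. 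There is no real obstacle here; the only delicate point is bookkeeping the valuation growth against the $\underline{t}_n$-degree growth, and choosing $r$ close enough to $0$ to beat the degree expansion rate $1/D$.
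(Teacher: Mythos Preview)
Your proof is correct and follows essentially the same geometric-series approach as the paper: write $a = T^d(1+b)$, expand $1/a$ as $T^{-d}\sum_{k\geq 0}(-b)^k$, and choose $r$ with $-1/D < r < 0$ so that the valuation growth (linear in $k$) beats $|r|$ times the $\underline{t}_n$-degree growth (also linear in $k$). The only minor imprecision is your claim that each $\underline{t}_n$-coefficient of $b$ ``has the form $\lambda T^{-i}$'' --- in general it is an $\mathbb F_q$-linear combination of $T^{-1},\ldots,T^{-d}$ --- but this does not affect the conclusion that its valuation is $\geq 1$.
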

\begin{proof}   Write :
$$a=T^k +b,$$
where $k\in \mathbb N, $ and $b\in A_T[\underline{t}_n]$ and $v(b)>-k.$ Let $d\geq 0$ be the total degree of $b$ in $\underline{t}_n.$ Let $r\in \mathbb Q, r<0.$ Then for $m\geq 0:$
$$v_{r}(\frac{b^m}{T^{km}})\geq m+rmd.$$
Let's choose now $\frac{-1}{d}< r< 0.$ Then :
$$\lim_{m\rightarrow +\infty} v_{r}(\frac{b^m}{T^{km}})=+\infty.$$
Therefore $\frac{1}{a}= \sum_{m\geq 0} (-1)^m \frac{b^m}{T ^{k(m+1)}}$ converges in $\mathbb T_{n,r}.$
\end{proof}

\subsection{ Gauss-Thakur sums}${}$\par
Let $\mathbb F_q$ be a finite field having $q$ elements, $\theta$ be an indeterminate over $\mathbb F_q,$ $A=\mathbb F_q[\theta],$ $A_{+}$ be the set of monic elements in $A,$ $A_{+,k}$ be the set of elements in $A_{+}$ of degree $k\geq 0,$ $K=\mathbb F_q(\theta).$ Let $\infty$ be the unique place of $K$ which is a pole of $\theta.$ Let $K_{\infty}= \mathbb F_q((\frac{1}{\theta}))$ and let $\mathbb C_{\infty}$ be the completion of a fixed algebraic closure of $K_{\infty}.$ Let $v_{\infty}: \mathbb C_{\infty}\rightarrow  \mathbb Q\cup \{ +\infty\}$ be the valuation on $\mathbb C_{\infty}$ normalized such that $v_{\infty} (\theta) =-1.$ Let $\overline{K}$ be the algebraic closure of $K$ in $\mathbb C_{\infty},$ and  let $\overline{\mathbb F_q}$ be the algebraic closure of $\mathbb F_q$ in $\mathbb C_{\infty}.$\par 
\noindent Let $\exp_C $ be the Carlitz exponential (see \cite{GOS}, chapter 3). For every $f\in A\setminus \{0\},$ we set $\lambda_f=\exp_C(\frac{\widetilde{\pi}}{f})\in \overline{K},$ where :
$$\widetilde{\pi} = ^{q-1}\sqrt{-\theta}\,  \theta \prod_{j\geq 1}(1- \theta^{1- q^j})^{-1} \in \mathbb C_{\infty}.$$
We set $\Delta_f= {\rm Gal}(K(\lambda_f)/K).$ We refer the reader to \cite{ROS}, chapter 12, for the basic properties of the abelian extension $K(\lambda_f)/K.$ For $a\in A,$ $a$ relatively prime to $f,
$ we denote by $\sigma_a$ the element in $\Delta_f$ such that :
$$\sigma_a(\lambda_f)= C_a(\lambda_f),$$
where $C$ is the Carlitz module (see \cite{GOS}, chapter 3).\par

Let $P$ be a prime of $A$ (i.e. a monic irreducible element in $A$) of degree $d\geq 1.$ Let $K_P$ be the $P$-adic completion of $K$ and $A_P$ be the valuation ring of $K_P.$ Let $\mathbb C_P$ be the completion of a fixed algebraic closure of $K_P$ and we fix once and for all a $K$-embedding of $\overline{K}$ in $ \mathbb C_P.$ Let $v_P$ be the $P$-adic valuation on $\mathbb C_P$ normalized such that $v_P(P)=1.$ Let $O_{\mathbb C_P}=\{ x\in \mathbb C_P, v_P(x)\geq 0\}.$ \par

\noindent Let $sgn= sgn_P : O_{\mathbb C_P}\rightarrow \overline{\mathbb F_q}\subset O_{\mathbb C_P}$ be the morphism of $\overline{\mathbb F_q}$-algebras  such that :
$$\forall x\in O_{\mathbb C_P}, v_P(sgn(x)-x)>0.$$
If $x\in O_{\mathbb C_P}^\times,$ we write $x=sgn(x)<x>,$ where $v_P(1-<x>)>0.$
We set  $\zeta_P= sgn (\theta).$ Observe that :
$$P(\zeta_P)=0.$$
A Dirichlet character, $\chi,$  is a morphism :
$$\chi: (\frac{A}{bA})^{\times}\rightarrow \overline{\mathbb F_q}\subset O_{\mathbb C_P},$$
for some squarefree $b\in A_{+}.$ Let $\mathbb F_q(\chi) $ be the field obtained by adjoining to $\mathbb F_q$ the values of $\chi.$ If we view $\chi$ as a primitive character of conductor $f,$ we extend $\chi$ into a map $\chi :A\rightarrow\mathbb F_q(\chi),$ by sending $a$ to $0$ if $a$ is not prime to $f,$ and $\chi( a) = \chi (a \pmod{f}) $ otherwise.  We say that $\chi$ is of type $1,$ if $\chi :A\rightarrow \mathbb F_q(\chi)$ is a ring homomorphism.
Observe that if $\chi$ is of type $1,$ then its conductor is a prime of $A.$ Let $\chi_P$ be  the Dirichlet character of type $1$ and conductor $P$ such that $\chi_P(\theta)=\zeta_P.$\par

\noindent Let $f$ be a prime of $A$  and let $\chi$ be a Dirichlet character of type $1$ and conductor $f,$  recall that the Gauss-Thakur sum associated to $\chi$ is defined by (see \cite{THA}) :
$$\forall i\geq 0, g(\chi)= -\sum_{ \sigma \in \Delta_f} \chi^{-1}(\sigma) \sigma (\lambda_f),$$
where  we recall that $\Delta_f={\rm Gal}(K(\lambda_f)/K).$ Now let $\chi$ be a primitive Dirichlet character of conductor $P_1\cdots P_r,$ where $P_1, \ldots, P_r$ are distinct primes of $A.$ For $i=1, \ldots, r,$ let $\chi_i$ be a Dirichlet character of type $1$ and conductor $P_i,$ then :
$$\chi= \chi_{1}^{N_1}\cdots \chi_{r}^{N_r},$$ where $0\leq N_j\leq q^{deg P_j}-2.$ Write, for $j=1, \ldots , r,$ $N_j =\sum_{i=0}^{deg_{P_j}-1} n_{i,j} q^i , $ $n_{i,j}\in \{ 0\ldots, q-1\}.$ The Gauss-Thakur sum associated to $\chi$ is (see for example \cite{ANG&PEL2}) :
$$g(\chi)=\prod_{j=1}^r \prod_{i=0}^{deg P_j-1} g(\chi_{j}^{q^i})^{n_{i,j}}.$$
The type of $\chi$ (see also \cite{APTR}) is defined to be $\sum_{j=1}^r\sum_{i= 0}^{deg P_j-1} n_{i, j}.$ \par
\noindent If $\chi$  is a Dirichlet character of conductor $b,$ we have (see \cite{ANG&PEL2}):
$$g(\chi)g(\chi^{-1})= (-1)^{deg b}b.$$

\subsection{ $L$-functions}${}$\par
In this section, we recall the functional identities for certain $s$-variable $L$-series obtained in \cite{ANG&PEL2}, \cite{PEL}. Let $s\geq 0$ be an integer and  $Z, t,y_1, \ldots,y_s$ be $s+2$ variables over $\mathbb F_q.$ We set for $d\geq 1$:
$$\mathcal L(\mathbb F_{q^d};  y_1, \ldots, y_s; Z)= \sum_{ a\in \mathbb F_{q^d}[\theta]_{+}}\frac{a(y_1) \cdots a(y_s)}{a(\frac{1}{Z})} \in \mathbb F_{q^d}[y_1, \ldots, y_s][[Z]]^\times.$$
We also set :
$$\omega(\mathbb F_{q^d}; t; Z)= \prod_{j\geq 0} (1- tZ^{q^{dj}})^{-1} \in \mathbb F_{q^d}[t][[Z]]^\times ,$$
and :
$$\pi(\mathbb F_{q^d}; Z)=  \prod_{j\geq 1} (1- Z^{q^{dj}-1})^{-1} \in \mathbb F_{q^d}[[Z]]^\times .$$
\begin{proposition}\label{Preltheorem1}${}$\par
\noindent Let $d\in \mathbb N\setminus\{0\}.$\par
\noindent 1) For $s=1,$ set $y=y_1.$ Then:
$$\frac{\mathcal L(\mathbb F_{q^d};  y; Z)\, \omega(\mathbb F_{q^d}; y; Z)}{\pi(\mathbb F_{q^d}; Z)}= \frac{1}{1-yZ}.$$
\noindent 2) Let $s\equiv 1\pmod{q^d-1}, $  $s\geq q^d.$ Then there exists $B(\mathbb F_{q^d}; y_1, \ldots, y_s; Z)\in \mathbb F_{q^d}[y_1, \ldots, y_s, Z]\cap (1+Z\mathbb F_{q^d}[y_1, \ldots, y_s][[Z]] )$ such that :
$$\frac{\mathcal L(\mathbb F_{q^d};  y_1, \ldots, y_s; Z)\, \omega(\mathbb F_{q^d}; y_1; Z)\cdots \omega(\mathbb F_{q^d}; y_s; Z)}{\pi(\mathbb F_{q^d}; Z)}= B(\mathbb F_{q^d}; y_1, \ldots, y_s; Z) .$$
\end{proposition}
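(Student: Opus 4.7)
The plan is to obtain both parts by a formal substitution $\theta\leftrightarrow 1/Z$ which identifies this proposition with the Pellarin formula \cite{PEL} (for part 1) and its $s$-variable extension from \cite{ANG&PEL2} (for part 2), both applied over the base field $\mathbb F_{q^d}$. For monic $a\in\mathbb F_{q^d}[\theta]$ of degree $k$, the leading term of $a(1/Z)$ is $Z^{-k}$, so $1/a(1/Z)\in Z^k\mathbb F_{q^d}[[Z]]^\times$; consequently $\mathcal L(\mathbb F_{q^d};y_1,\ldots,y_s;Z)$ is precisely the Pellarin $L$-series $L_{\mathbb F_{q^d}}(y_1,\ldots,y_s):=\sum_a a(y_1)\cdots a(y_s)/a$ with $\theta$ replaced by $1/Z$. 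Fixing $\eta:=\sqrt[q^d-1]{-\theta}$, a direct comparison of infinite products yields
$$\omega_{\mathbb F_{q^d}}(y)=\eta\cdot\omega(\mathbb F_{q^d};y;Z),\qquad \widetilde\pi_{q^d}=\eta\cdot Z^{-1}\cdot\pi(\mathbb F_{q^d};Z)$$
under the same substitution, where $\omega_{\mathbb F_{q^d}}$ and $\widetilde\pi_{q^d}$ are the Anderson--Thakur function and Carlitz period attached to $\mathbb F_{q^d}[\theta]$.

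Part 1 then follows immediately from the formula $L_{\mathbb F_{q^d}}(y)\omega_{\mathbb F_{q^d}}(y)=\widetilde\pi_{q^d}/(\theta-y)$ of \cite{PEL}: substituting $\theta=1/Z$ and cancelling the common factor $\eta$ leaves
$$\mathcal L(\mathbb F_{q^d};y;Z)\,\omega(\mathbb F_{q^d};y;Z)=\frac{Z^{-1}\pi(\mathbb F_{q^d};Z)}{Z^{-1}-y}=\frac{\pi(\mathbb F_{q^d};Z)}{1-yZ}.$$
For part 2, the $s$-variable analogue \cite{ANG&PEL2} supplies, under the hypotheses $s\equiv 1\pmod{q^d-1}$ and $s\geq q^d$, an equality $L_{\mathbb F_{q^d}}(y_1,\ldots,y_s)\prod_i\omega_{\mathbb F_{q^d}}(y_i)=\epsilon\,\mathbb B(y_1,\ldots,y_s)\,\widetilde\pi_{q^d}$ with $\epsilon\in\{\pm 1\}$ and $\mathbb B$ polynomial in $y_1,\ldots,y_s,\theta$. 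The congruence $s\equiv 1\pmod{q^d-1}$ is exactly what is needed so that $\eta^s/\eta=\eta^{s-1}$ becomes a power of $\eta^{q^d-1}=-\theta=-1/Z$; after substitution and tracking the $Z$-powers one finds $B(y_1,\ldots,y_s;Z)=\pm Z^{(s-1)/(q^d-1)-1}\mathbb B(y_1,\ldots,y_s;1/Z)$, which lies in $\mathbb F_{q^d}[y_1,\ldots,y_s,Z]$ because the $\theta$-degree of $\mathbb B$ is bounded by $(s-1)/(q^d-1)-1$ (this bound is precisely what requires $s\geq q^d$, and is read off from the explicit construction of $\mathbb B$ in \cite{ANG&PEL2}). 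The normalization $B\in 1+Z\mathbb F_{q^d}[y_1,\ldots,y_s][[Z]]$ is immediate by specializing the left-hand side at $Z=0$: each $\omega$-factor and $\pi$ equal $1$, and only the $a=1$ term of $\mathcal L$ survives, also equal to $1$.

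The main obstacle is the bookkeeping with the auxiliary $(q^d-1)$-th root $\eta$: one must carry it consistently through the substitution, verify that it cancels exactly under the congruence $s\equiv 1\pmod{q^d-1}$, and then translate the polynomial structure of $\mathbb B$ from \cite{ANG&PEL2} into the polynomiality and normalization of $B$. Apart from this, no new analytic input is required.
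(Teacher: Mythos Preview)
Your proposal is correct and follows essentially the same route as the paper: both arguments reduce the identity to Pellarin's formula \cite{PEL} and its $s$-variable extension \cite{ANG&PEL2}, \cite{APTR} via the substitution $\theta\leftrightarrow 1/Z$, with your auxiliary root $\eta=\sqrt[q^d-1]{-\theta}$ playing exactly the role of the paper's $\lambda_\theta$ in the relations $\widetilde\pi=\lambda_\theta\,\theta\,\pi_\infty$ and $\omega(t)=\lambda_\theta\,\omega_\infty(t)$. The paper is terser (reducing to $d=1$ and citing the references without unpacking the degree bound or the normalization at $Z=0$), while you carry out those details explicitly, but the underlying argument is the same.
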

\begin{proof}  It is enough to prove the result for $d=1.$ Let $t, t_1, \cdots, t_s$ be $s+1$ indeterminates over $\mathbb C_{\infty},$ then  set :
$$L(t_1, \ldots , t_s)= \mathcal L(\mathbb F_q;  t_1, \ldots, t_s; \frac{1}{\theta}) \in K_{\infty}[[t_1, \ldots, t_s]].$$
This is the $L$-function introduced in \cite{PEL} and studied in  \cite{ANG&PEL1},  \cite{ANG&PEL2}, \cite{APTR}, \cite{GOS2}, \cite{PER1}, \cite{PER2}, \cite{PER3}. Recall that $L(t_1, \ldots, t_s)$ defines an entire function on $\mathbb C_{\infty}^s$ (\cite{ANG&PEL2} Proposition 6). Let's set :
$$\pi_{\infty}= \pi(\mathbb F_q; \frac{1}{\theta}) \in K_{\infty}^\times,$$
and :
$$\omega_{\infty} (t) =\omega(\mathbb F_q; t; \frac{1}{\theta})\in K_{\infty}[[t]].$$
Then :
$$\widetilde{\pi} = \lambda_{\theta}\, \theta\,  \pi_{\infty},$$
and the Anderson-Thakur function (see for example \cite{ANG&PEL1}), $\omega(t),$ is equal to :
$$\omega(t) =\lambda_{\theta}\, \omega_{\infty}(t).$$
Then assertion 1) is a consequence of \cite{PEL} Theorem 1, and assertion 2) is a consequence of \cite{ANG&PEL2} Theorem 1 and  \cite{APTR} section 7.1.1. .
\end{proof}
\noindent Recall that we have (see for example  \cite{PEL}):
$$\omega(t) =\sum_{i\geq 0} \lambda_{\theta^{i+1}} t^i.$$

\noindent We will need the following Lemma in the sequel :
\begin{lemma}\label{Prellemma1}
Let $\mathbb T$ be the Tate algebra in the variable $t$ with coefficients in $\mathbb C_{\infty},$ (see \cite{PUT}, chapter 3),  we have the following equality in $\mathbb T:$
$$    \omega(t) =\sum_{j\geq 0}\lambda_{(\theta+1)^{j+1}} (t+1)^j           .$$
\end{lemma}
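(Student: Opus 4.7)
The plan is to obtain both sides as the image under the Carlitz exponential (extended $\mathbb{F}_q[[t]]$-linearly, hence $\mathbb{F}_q[[t+1]]$-linearly, to $\mathbb{T}$ via the convention $\tau(\sum a_n t^n)=\sum a_n^q t^n$) of the single rational function $\widetilde{\pi}/(\theta-t)\in\mathbb{T}$, merely expanded around two different base points. The identity will then follow from Pellarin's formula (1), which is equivalent to $\omega(t)=\exp_C(\widetilde{\pi}/(\theta-t))$ in $\mathbb{T}$, together with the geometric expansion around $t=-1$.

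First I would verify that $\sum_{j\geq 0}\lambda_{(\theta+1)^{j+1}}(t+1)^j$ converges in $\mathbb{T}$. Since $\exp_C$ is an isometry on the disk $v_{\infty}(x)>v_{\infty}(\widetilde{\pi})=-q/(q-1)$, one has $v_{\infty}(\lambda_{(\theta+1)^{j+1}})=v_{\infty}(\widetilde{\pi}/(\theta+1)^{j+1})=(j+1)-q/(q-1)$, which tends to $+\infty$; combined with $\|(t+1)^j\|_{\mathrm{Gauss}}=1$, the general term tends to $0$ in Gauss norm. Next I would write, in $\mathbb{T}$,
$$\frac{\widetilde{\pi}}{\theta-t}=\frac{\widetilde{\pi}}{(\theta+1)-(t+1)}=\sum_{j\geq 0}\frac{\widetilde{\pi}}{(\theta+1)^{j+1}}(t+1)^j,$$
the geometric series being valid because $\|(t+1)/(\theta+1)\|_{\mathrm{Gauss}}=|\theta|^{-1}<1$. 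Applying the coefficient-wise $\exp_C$ and using that $t+1\in\mathbb{F}_q[t]$ is fixed by $\tau$, so that each $(t+1)^j$ factors out, yields
$$\exp_C\!\left(\tfrac{\widetilde{\pi}}{\theta-t}\right)=\sum_{j\geq 0}\exp_C\!\left(\tfrac{\widetilde{\pi}}{(\theta+1)^{j+1}}\right)(t+1)^j=\sum_{j\geq 0}\lambda_{(\theta+1)^{j+1}}(t+1)^j.$$
Comparing with Pellarin's identity $\omega(t)=\exp_C(\widetilde{\pi}/(\theta-t))$ (an immediate consequence of (1) since $\exp_C\circ\log_C=\mathrm{id}$ on $\omega(t)$) finishes the argument.

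The only non-cosmetic point is the justification that $\exp_C$ commutes with the infinite sum in the Gauss topology; this follows routinely from the coefficient-wise definition of $\tau$ and the continuity of $\exp_C=\sum_{i\geq 0}\tau^i/D_i$ on $\mathbb{T}$. Note also that this is the exact same mechanism that produces the standard expansion $\omega(t)=\sum_{i\geq 0}\lambda_{\theta^{i+1}}t^i$ recalled just before the lemma, applied to the expansion of $\widetilde{\pi}/(\theta-t)$ at $t=-1$ rather than at $t=0$.
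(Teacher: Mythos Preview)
Your proof is correct and follows essentially the same approach as the paper: both identify $\omega(t)=\exp_C\bigl(\widetilde{\pi}/(\theta-t)\bigr)$ in $\mathbb{T}$ and then expand $\widetilde{\pi}/(\theta-t)=\widetilde{\pi}/\bigl((\theta+1)-(t+1)\bigr)$ as a geometric series in $(t+1)$ before applying $\exp_C$ term by term. Your version is simply more explicit about the convergence estimates and the $\mathbb{F}_q[t]$-linearity of $\exp_C$ on $\mathbb{T}$, whereas the paper compresses the argument into two lines.
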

\begin{proof} Set :
$$F(t)=\sum_{j\geq 0}\lambda_{(\theta+1)^{j+1}} (t+1)^j     \in \mathbb T^\times.$$
Let $\tau :\mathbb T\rightarrow \mathbb T$ be the continuous morphism of $\mathbb F_q[t]$-algebras such that :
$$\forall x\in \mathbb C_{\infty}, \tau (x) =x^q.$$
Recall that $$\exp_C=\sum_{i\geq 0} \frac{1}{D_i} \tau ^i,$$
where $D_0=1,$ and for $i\geq 1: D_i=(\theta^{q^i}-\theta) D_{i-1}^q.$ Then (see for example \cite{PEL}):
$$\omega(t) =\exp_C (\frac{\widetilde{\pi}}{\theta-t}).$$
Thus :
$$\omega(t)= \exp_C(\frac{\widetilde{\pi}}{(\theta+1)-(t+1)})= F(t).$$
\end{proof}

\subsection{ $P$-adic interpolation}${}$\par
Let $P$ be a prime of $A$ of degree $d\geq 1,$ and let $z, t_1, \ldots, t_s$ be $s+1$ indeterminates over $\mathbb C_P.$ We set :
$$L_P(t_1, \ldots, t_s; z) =\sum_{k\geq 0} \sum_{a\in A_{+,k}, a\not \equiv 0\pmod{P}} \frac{a(t_1)\cdots a(t_s)}{a} z^k \in A_P[t_1, \ldots, t_s][[z]].$$
\begin{theorem}\label{Preltheorem2}
$L_P(t_1, \ldots, t_s; z)$ defines an entire function on $\mathbb C_P^{s+1}.$
\end{theorem}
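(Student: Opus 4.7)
The plan is to verify the defining condition of the algebra of entire functions $\mathbb{E}_{s+1}$ from Subsection 1.1. Writing
$$L_P(t_1,\ldots,t_s;z) = \sum_{k \geq 0} c_k(t_1,\ldots,t_s)\, z^k, \qquad c_k := \sum_{\substack{a \in A_{+,k} \\ a \not\equiv 0 \pmod{P}}} \frac{a(t_1)\cdots a(t_s)}{a} \in A_P[t_1,\ldots,t_s],$$
each coefficient $c_k$ is a polynomial of degree at most $k$ in each $t_j$, so any monomial $z^k t_1^{i_1} \cdots t_s^{i_s}$ appearing in $L_P$ has total weight $k + |\underline{i}| \leq (s+1)k$. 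Consequently it suffices to prove that $v_P(c_k)/k \to +\infty$, where $v_P(c_k)$ is the $v_P$-Gauss valuation of the polynomial $c_k$.

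To bound $v_P(c_k)$ from below, I would fix an auxiliary integer $n \geq 1$ with $nd \leq k$ and partition the summation by residue classes $r \in (A/P^n)^{\times}$, lifting each such $r$ to $A$ of $\theta$-degree $< nd$. Every $a$ in the class $r$ is uniquely of the form $a = r + P^n m$ with $m$ running through $A_{+, k-nd}$; expanding with the geometric series one gets
$$\frac{a(\underline t)}{a} = \frac{1}{r}\Bigl(\sum_{j \geq 0}\bigl(-P^n m/r\bigr)^j\Bigr)\prod_{i=1}^{s}\bigl(r(t_i) + P(t_i)^n m(t_i)\bigr),$$
so a generic term has the shape $P^{nj}\cdot m^j \prod_{i \in S} m(t_i)\cdot(\text{polynomial in } r \text{ and } \underline t)$ for some $S \subseteq \{1,\ldots,s\}$. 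The crucial observation is that upon summing over $m \in A_{+, k-nd}$, parametrized by $(m_0,\ldots,m_{k-nd-1}) \in \mathbb{F}_q^{k-nd}$ (the top coefficient being fixed equal to $1$), every term with $j + |S| < (q-1)(k-nd)$ vanishes: indeed $m^j \prod_{i \in S} m(t_i)$ has total degree at most $j + |S|$ in the free parameters $m_\ell$, and since $\sum_{c \in \mathbb{F}_q} c^e$ vanishes unless $e \geq 1$ and $(q-1) \mid e$, a nonzero contribution forces each parameter to appear with exponent $\geq q-1$, so total degree at least $(q-1)(k-nd)$.

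Assembling this yields $v_P(c_k) \geq n\bigl[(q-1)(k-nd) - s\bigr]$ as soon as the bracket is positive; choosing $n = \lfloor k/(2d) \rfloor$ produces $v_P(c_k) \geq c\, k^2$ for a positive constant $c$ depending on $q$, $d$ and $s$, and therefore $v_P(c_k)/k \to +\infty$, establishing $L_P \in \mathbb{E}_{s+1}$. The main technical hurdle is the combinatorial bookkeeping in the expansion of $a(\underline t)/a$, together with the uniform verification of the $\mathbb{F}_q$-character-sum vanishing across the polynomial-in-$\underline t$ factors; once this vanishing is in hand, the super-linear lower bound for $v_P(c_k)$ and the entirety itself follow at once.
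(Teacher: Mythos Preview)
Your argument is correct and follows a genuinely different route from the paper's. The paper approximates $a^{-1}$ via Fermat's little theorem: for $a$ coprime to $P$ one has $v_P\bigl(a^{-1}-a^{q^{dn}(q^d-1)-1}\bigr)\geq q^{dn}$, so $V_k$ is congruent modulo $P^{q^{dn}}$ to the polynomial power sum $\sum_{a\in A_{+,k}} a(t_1)\cdots a(t_s)\,a^{N}$ with $N=q^{dn}(q^d-1)-1$; the latter is then killed by the known vanishing of $S_k$ in sufficiently many variables (Lemma~4 of \cite{ANG&PEL2}), giving the exponential bound $v_P(V_k)\geq q^{dn}$ as soon as $k(q-1)$ exceeds roughly $s+nd(q-1)$. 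You instead write $a=r+P^nm$ and expand $1/a$ as a geometric series, then apply the $\mathbb F_q$-character-sum vanishing directly to the sum over $m\in A_{+,k-nd}$; this is more self-contained (no external lemma is invoked) but produces only the quadratic bound $v_P(c_k)\gtrsim k^2$ after optimising $n\approx k/(2d)$. Both bounds are far more than sufficient for entirety, and both ultimately rest on the same elementary fact that $\sum_{c\in\mathbb F_q}c^{e}=0$ unless $e$ is a positive multiple of $q-1$; the distinction is whether one polynomialises $a^{-1}$ globally by Fermat or locally by a $P$-adic Taylor expansion.
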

\begin{proof} The proof of the above Theorem is essentially a consequence of  \cite{ABBL} section 3.3 and the proof of \cite{ANG&PEL2} Proposition 6. Set :
$$V_k(t_1, \ldots, t_s)= \sum_{a\in A_{+,k}, a\not \equiv 0\pmod{P}} \frac{a(t_1)\cdots a(t_s)}{a} ,$$
and :
$$S_k(t_1, \dots, t_s)=\sum_{a\in A_{+,k}} a(t_1)\cdots a(t_s) .  $$
Let $v_P$ be the $P$-adic Gauss valuation on $K_P(t_1, \ldots t_s).$ By \cite{ANG&PEL2} Lemma 4, if $s< k(q-1),$ we have $S_k(t_1, \ldots, t_s)=0.$
Now, let $n\geq 1$ be an integer, then :
$$\forall a \in A\setminus PA, v_P(\frac{1}{a} -a^{q^{dn}(q^d-1)-1}) \geq q^{dn}.$$
Thus:
$$v_P(V_k(t_1, \ldots t_s)-\sum_{a\in A_{+,k}} a(t_1)\cdots a(t_s) a^{q^{dn}(q^d-1)-1})\geq q^{dn}.$$
But , if $k(q-1)> s+nd(q-1)+q^d-2,$ we have :
$$\sum_{a\in A_{+,k}} a(t_1)\cdots a(t_s) a^{q^{dn}(q^d-1)-1}=0.$$
Thus if $k(q-1)> s+nd(q-1)+q^d-2,$ we have :
$$v_P(V_k(t_1, \ldots, t_s))\geq q^{nd}.$$
\end{proof}
\begin{proposition}\label{Prelproposition1}
Let $s\geq 1.$ We have $L_P(t_1, \ldots, t_s;z)\mid_{z=1}= 0$ if and only if $s\equiv 1\pmod{q-1}.$
\end{proposition}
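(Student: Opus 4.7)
My plan hinges on the formal Euler-product identity
\[
L_P(t_1,\dots,t_s; z) \;=\; \Bigl(1 - \tfrac{P(t_1)\cdots P(t_s)\, z^d}{P}\Bigr) L(t_1,\dots,t_s; z) \qquad\text{in } K(t_1,\dots,t_s)[[z]],
\]
obtained by stripping off the Euler factor at $P$ from the product representation of $L(t_1,\dots,t_s; z)$ given in the introduction. I would treat the two directions of the ``iff'' separately.

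\emph{Only-if direction.} Specialize at $(t_1,\dots,t_s) = (\theta,\dots,\theta)$, so $a(\theta,\dots,\theta) = a^s$. Splitting the monic $a$'s according to divisibility by $P$ (writing $a = Pb$, $b\in A_+$, when $P\mid a$) yields
\[
L_P(\theta,\dots,\theta; 1) \;=\; \sum_{a\in A_+,\, P\nmid a} a^{s-1} \;=\; (1-P^{s-1})\sum_{a\in A_+} a^{s-1}.
\]
The Carlitz vanishing $\sum_{a\in A_{+,k}} a^M = 0$ for $k(q-1)>M$ (used already in the proof of Theorem \ref{Preltheorem2}) shows the inner sum is a \emph{finite} sum, namely the Carlitz zeta value at $1-s$, which by Carlitz's classical trivial-zero computation vanishes iff $s\ge 2$ and $(q-1)\mid(s-1)$; the factor $1-P^{s-1}$ vanishes iff $s=1$. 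Hence $L_P(\theta,\dots,\theta; 1) = 0$ iff $s\equiv 1\pmod{q-1}$, so contrapositively when $s\not\equiv 1\pmod{q-1}$, $L_P(t_1,\dots,t_s;1)$ is not identically zero.

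\emph{If direction and main obstacle.} Assume $s\equiv 1\pmod{q-1}$. The $P$-adic entire function $L_P(t_1,\dots,t_s;1) \in \mathcal{A}$ is identically zero iff it vanishes on any bounded infinite subset of $\mathbb{C}_P^s$ (Lemma \ref{ATlemma1}); I would use $\overline{\mathbb{F}_q}^{\,s}$. For $(\zeta_1,\dots,\zeta_s)$ in this set, with each $\zeta_i$ a root of a prime $Q_i$ of $A$, $a(\zeta_i)$ is a type-$1$ Dirichlet character value, so
\[
L_P(\zeta_1,\dots,\zeta_s;1) \;=\; \sum_{a\in A_+,\,P\nmid a}\chi(a)/a,
\]
where $\chi$ is a Dirichlet character of type $s$. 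The case $s=1$ is the classical vanishing of Angl\`es--Pellarin (\cite{ANG&PEL1}) coming from Pellarin's formula (1); for general $s\equiv 1\pmod{q-1}$, the multi-variable Pellarin formula (Proposition \ref{Preltheorem1}(2)) expresses $L(\zeta_1,\dots,\zeta_s;1)\prod_i\omega(\zeta_i)/\widetilde{\pi}$ as an $\overline{K}$-algebraic quantity, and combining with the factor $(1 - P(\zeta_1)\cdots P(\zeta_s)/P)$ produces the desired trivial zero. The main obstacle is transferring the $\infty$-adic identity (which involves the transcendental period $\widetilde{\pi}$) to the $P$-adic setting; I would handle this by working pointwise at $(\zeta_1,\dots,\zeta_s)$, where the $\widetilde{\pi}$ and Anderson--Thakur factors combine — thanks to $s\equiv 1\pmod{q-1}$ — into an $\overline{K}$-algebraic expression that can be compared with the $P$-adic sum via the fixed embedding $\overline{K}\hookrightarrow \mathbb{C}_P$.
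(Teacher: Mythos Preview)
Your only-if direction is fine and matches the paper's approach: evaluate at $(\theta,\dots,\theta)$ and use the non-vanishing of the Bernoulli--Goss number $\beta(s-1)$ when $s\not\equiv 1\pmod{q-1}$.

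The if direction, however, has a genuine gap. The Euler-product relation
\[
L_P(t_1,\dots,t_s;z)=\Bigl(1-\tfrac{P(t_1)\cdots P(t_s)\,z^d}{P}\Bigr)L(t_1,\dots,t_s;z)
\]
is an identity of \emph{formal} power series in $z$. After specializing the $t_i$ to $\zeta_i\in\overline{\mathbb F_q}$, the left-hand side at $z=1$ is a $P$-adic limit in $\mathbb C_P$, while $L(\zeta_1,\dots,\zeta_s;1)$ on the right is an $\infty$-adic limit in $\mathbb C_\infty$. These live in different completions of $\overline K$, and the formal identity gives no relation between them unless both values happen to be algebraic. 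Your proposed fix --- that ``the $\widetilde\pi$ and Anderson--Thakur factors combine into an $\overline K$-algebraic expression'' --- does not work: the value $L(\zeta_1,\dots,\zeta_s)$ is a special value of a Goss abelian $L$-series at $1$, and such values are generically transcendental over $K$ (this is precisely why logarithms enter in Theorem \ref{theorem2}). So you cannot push the $\infty$-adic Pellarin identity across the embedding $\overline K\hookrightarrow\mathbb C_P$.

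The paper sidesteps this completely by evaluating instead on the set $\{\theta^{q^i}:i\geq 0\}^s$. At such a point the exponent of $a$ becomes $q^{i_1}+\cdots+q^{i_s}-1$, and by the Carlitz vanishing lemma (already invoked in the proof of Theorem \ref{Preltheorem2}) the sums $\sum_{a\in A_{+,k}}a^{q^{i_1}+\cdots+q^{i_s}-1}$ are zero for $k$ large. Hence both sides are \emph{finite} sums lying in $K$, and the formal Euler identity specializes literally. When $s\equiv 1\pmod{q-1}$ one has $q^{i_1}+\cdots+q^{i_s}-1\equiv 0\pmod{q-1}$, so the trivial zeros of the Carlitz--Goss zeta function (\cite{GOS}, \S 8.13) give $F(\theta^{q^{i_1}},\dots,\theta^{q^{i_s}})=0$ for all choices of $i_1,\dots,i_s$. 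Since $\{\theta^{q^i}\}$ is a bounded infinite subset of $\mathbb C_P$, Lemma \ref{ATlemma1} forces the entire function $F$ to vanish identically. This is the missing idea: pick evaluation points that make the sums terminate, so the $\infty$-adic/$P$-adic comparison becomes vacuous.
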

\begin{proof}  Let's set :
$$F(t_1, \ldots , t_s)= L_P(t_1, \ldots, t_s;z)\mid_{z=1}.$$
Then by Theorem \ref{Preltheorem2}, $F(t_1, \ldots, t_s)$ defines an entire function on $\mathbb C_P^{s}.$  Let $i_1, \ldots, i_s \in \mathbb N.$
Then, if $s\not \equiv 1\pmod{q-1},$ we have :
$$F(\theta^{q^{i_1}}, \ldots, \theta^{q^{i_s}})= (1-P^{q^{i_1}+\cdots +q^{i_s}-1})\beta (q^{i_1}+\cdots + q^{i_s}-1) \not =0,$$
where $\beta (i)$ denotes the $i$th Bernoulli-Goss number (see for example  \cite{GOS3}, \cite{IRE&SMA}). Now, if $s\equiv 1\pmod{q-1},$ we have :
$$F(\theta^{q^{i_1}}, \ldots, \theta^{q^{i_s}})= (1-P^{q^{i_1}+\cdots +q^{i_s}-1})(\sum_{k\geq 0}\sum_{a\in A_{+,k}} a^{q^{i_1}+\cdots + q^{i_s}-1}).$$
But, since $q^{i_1}+\cdots + q^{i_s}\equiv 1\pmod{q-1},$ we get (\cite{GOS}, section 8.13.) :
$$F(\theta^{q^{i_1}}, \ldots, \theta^{q^{i_s}})=0.$$
But $F(t_1, \ldots, t_s)$ is an entire function, thus by Lemma \ref{ATlemma1},  if $s\equiv 1\pmod{q-1},$ we get that $F(t_1, \ldots, t_s)=0.$
\end{proof}
If $s\not \equiv 1\pmod{q-1},$ we set :
$$L_P(t_1, \ldots, t_s)=L_P(t_1, \ldots, t_s, z)\mid_{z=1},$$
and if $s\equiv 1\pmod{q-1} :$
$$L_P(t_1, \ldots, t_s)= \frac{d}{dz}L_P(t_1, \ldots, t_s, z)\mid_{z=1}.$$
Note that, by Theorem \ref{Preltheorem2}, $L_P(t_1, \ldots, t_s)$ defines an entire function on $\mathbb C_P^{s}.$

\begin{proposition}\label{Prelproposition2} Let $s\geq 1$ be an integer. Let $v_P$ be the Gauss $P$-adic valuation on $A_P[[t_1, \ldots, t_s]].$ Then :
$$v_P(L_P(t_1, \ldots, t_s))=0.$$
\end{proposition}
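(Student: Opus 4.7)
The plan is to prove $v_P(L_P) \geq 0$ and $v_P(L_P) \leq 0$ separately.

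\textbf{Lower bound $v_P(L_P) \geq 0$:} This is immediate from the containment $L_P(t_1, \ldots, t_s) \in A_P[[t_1, \ldots, t_s]]$, itself a direct consequence of the $P$-adic estimates carried out during the proof of Theorem \ref{Preltheorem2}: every Taylor coefficient of $L_P$ in the $t_i$'s lies in $A_P$.

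\textbf{Upper bound $v_P(L_P) \leq 0$:} I would exhibit a specialization $\underline{t}_0 \in A^s$ with $v_P(L_P(\underline{t}_0)) = 0$. Since the Gauss $v_P$-valuation on $A_P[[\underline{t}]]$ satisfies $v_P(f(\underline{t}_0)) \geq v_P(f)$ whenever $v_P(t_{0,i}) \geq 0$, this would give the matching upper bound. The natural candidate is $\underline{t}_0 = (\theta^{q^{i_1}}, \ldots, \theta^{q^{i_s}})$ for non-negative integers $i_j$. Indeed, Frobenius gives $a(\theta^{q^{i_j}}) = a^{q^{i_j}}$, so the substitution collapses $L_P(\underline{t}_0; z)$ to a single-variable series
\[ L_P(\underline{t}_0; z) = \sum_{k \geq 0} \Bigg( \sum_{\substack{a \in A_{+,k}\\ P \nmid a}} a^{N-1} \Bigg) z^k, \qquad N := q^{i_1} + \cdots + q^{i_s}. \]

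In the case $s \not\equiv 1 \pmod{q-1}$, the computation already made in the proof of Proposition \ref{Prelproposition1} gives
\[ L_P(\underline{t}_0) = (1 - P^{N-1})\,\beta(N-1). \]
For $N \geq 2$, the factor $1 - P^{N-1}$ reduces to $1$ mod $P$, hence is a $P$-adic unit, and one is reduced to choosing $(i_1, \ldots, i_s)$ so that $\beta(N-1)$ is a $P$-adic unit. By the Carlitz-type vanishing $\sum_{a \in A_{+,k}} a(y_1)\cdots a(y_m) = 0$ whenever $k(q-1) > m$ (which was used in the proof of Theorem \ref{Preltheorem2}), the Bernoulli--Goss quantity $\beta(N-1) = \sum_a a^{N-1}$ collapses to an explicit finite polynomial in the $\theta^{q^{i_j}}$. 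Taking $i_1 = 0$ and the other $i_j$'s distinct, one checks that for $s \leq q-1$ only the $k=0$ term survives, yielding $\beta(N-1) = 1$; for larger $s$ the few surviving small-$k$ contributions can be steered by further tuning the $i_j$'s so that the result has nonzero image in $\mathbb{F}_{q^d} = A/P$.

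In the case $s \equiv 1 \pmod{q-1}$, the trivial zero $\beta(N-1) = 0$ forces $L_P(\underline{t}_0; 1) = 0$, so one instead analyses the derivative $L_P(\underline{t}_0) = \frac{d}{dz} L_P(\underline{t}_0; z)|_{z=1}$ by factoring out the simple zero $L_P(\underline{t}_0; z) = (1-z)G(z)$ and performing the same Carlitz/Frobenius reduction on $G(1)$. The hardest step in both cases is the combinatorial choice of $(i_1, \ldots, i_s)$ ensuring that the resulting reduced Bernoulli--Goss-type value is a $P$-adic unit, especially when $s$ is large compared to $q$; this requires a careful bookkeeping of which small-$k$ terms survive the Carlitz vanishing modulo $P$.
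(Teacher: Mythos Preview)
Your lower bound and your overall strategy of specialization match the paper. The gap is in the upper bound for $s\geq q$: you specialize \emph{all} variables to powers of $\theta$, land in $A$, and then must show that the resulting Bernoulli--Goss value $\beta(N-1)$ (or its derivative-analogue) is a $P$-adic unit. Your claim that the exponents $i_j$ ``can be steered'' so that $\beta(N-1)$ has nonzero image in $A/P$ is not proved; for a given $P$ there is no a priori reason why some choice of $N\equiv s\pmod{q-1}$ with $\ell_q(N)\le s$ should make $\beta(N-1)$ prime to $P$, and the bookkeeping you allude to is genuinely nontrivial. The same issue recurs in the odd case, where you would additionally need the zero of $L_P(\underline t_0;z)$ at $z=1$ to be simple.

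The paper avoids this entirely by specializing \emph{only one} variable, $t_s=\theta$. The point is that $a(\theta)=a$, so the factor $1/a$ cancels and
\[
L_P(t_1,\ldots,t_s)\big|_{t_s=\theta}=\sum_{k}\, c_k \sum_{\substack{a\in A_{+,k}\\ P\nmid a}} a(t_1)\cdots a(t_{s-1})\ \in\ \mathbb F_q[t_1,\ldots,t_{s-1}],
\]
with $c_k=1$ (resp.\ $c_k=k$) in the even (resp.\ odd) case; the sum is finite by the Carlitz vanishing. A nonzero element of $\mathbb F_q[t_1,\ldots,t_{s-1}]$ automatically has Gauss $v_P$-valuation $0$, so one only has to check \emph{nonvanishing}, not $P$-integrality of an inverse. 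Nonvanishing is then obtained by the further specialization $t_1=\cdots=t_{s-1}=\theta$ (giving $(1-P^{s-1})\beta(s-1)\neq 0$ in the even case). For $s=1$ one instead takes $t=\theta^q$ and computes $L_P(\theta^q)=P^{q-1}-1$. This single-variable specialization is the idea you are missing; it converts the hard ``$P$-adic unit'' condition into a soft ``nonzero over $\mathbb F_q$'' condition.
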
 
\begin{proof} We  prove the assertion for $s\not \equiv 1\pmod{q-1}.$ 
Since $L_P(t_1, \dots, t_s)$ is an entire function, we can evaluate it at $t_s= \theta$, we get 
$$ L_P(t_1, \dots, t_s)\mid_{t_s=\theta} = \sum_{k\geq 0} \sum_{a\in A_{+,k}, a\not \equiv 0\pmod{P} }a(t_1)\cdots a(t_s)  \in \mathbb F_{q}[t_1, \ldots, t_s]$$
But $L_P(t_1, \dots, t_s)\mid_{t_1=\theta, \dots, t_s=\theta}  = (1-P^{s-1})\beta(s-1)$ where $\beta(s-1)\neq 0$ is the $s-1$th Bernoulli-Goss number (see \cite{IRE&SMA}). 
Thus, $ L_P(t_1, \dots, t_s)\mid_{t_s=\theta} \in \mathbb F_{q}[t_1, \ldots, t_s] \backslash \{0\}$, so that $v_P(L_P(t_1, \ldots, t_s))=0.$

The proof of the Theorem is similar for $s\geq 2, $ $s\equiv 1\pmod{q-1}.$ It remains to treat the case $s=1.$ Recall that by Theorem \ref{Preltheorem2}, $L_P(t)$ is an entire function. Since $\beta(q-1)=1,$ we have :
$$L_P(\theta^q)=\sum_{k\geq 1}\sum_{a\in A_{+,k}, a\not \equiv 0\pmod{P}} a^{q-1} = P^{q-1}-1.$$
 \end{proof}


\section{Anderson's $P$-adic logarithm}${}$\par
Let $P$ be a prime of $A$ of degree $d\geq 1.$ In this section, we recall the construction of the $P$-adic logarithm of Anderson (\cite{AND}, proposition 11) and we extend it to  $O_{\mathbb C_P}.$\par
Let $\tau : \mathbb C_P \rightarrow \mathbb C_P$ be the isomorphism of $\mathbb F_q$-algebras given by :
$$\forall x\in \mathbb C_P, \tau (x)=x^q.$$
Let $C: A\rightarrow A\{ \tau \}$ be the Carlitz module, i.e. $C$ is the morphism of $\mathbb F_q$-algebras given by :
$$C_{\theta}= \tau +\theta.$$
For $a\in A\setminus\{ 0\},$ let  $\Lambda_a\subset \overline{K}$ be the set of $a$-torsion points for the Carlitz module and set $\Lambda =\cup_{a\in A\setminus\{0\}} \Lambda_a.$\par
\begin{lemma}
\label{lemma1}${}$\par
\noindent i) Let $n\geq 1,$ then $K_P(\Lambda_{P^n-1})=K_P(\mathbb F_{q^{dn}}).$ Furthermore, let $\varphi\in {\rm Gal}(K_P(\mathbb F_{q^{dn}})/K_P)$ be the Frobenius morphism then :
$$\forall \lambda \in \Lambda_{P^n-1}, \varphi (\lambda)=C_P(\lambda).$$
ii) $K_P(\Lambda)$ is the maximal abelian extension of $K_P.$
\end{lemma}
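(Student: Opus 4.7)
The plan is to prove (i) by comparing the Carlitz torsion with its reduction modulo $P$, and then to deduce (ii) from (i) together with Lubin--Tate theory applied to the Carlitz module at $P$.

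For (i), I start from the standard fact that the cyclotomic extension $K(\Lambda_{P^n-1})/K$ is unramified at $P$ (since $P\nmid P^n-1$), so $K_P(\Lambda_{P^n-1})/K_P$ is an unramified extension of $K_P$. Writing $C_P=\sum_{k=0}^{d}(P,k)\tau^{k}$ with $(P,0)=P$, $(P,d)=1$ and $(P,k)\in PA$ for $0<k<d$, the polynomial $C_P(x)$ reduces modulo $P$ to $\bar C_P(x)=x^{q^{d}}$. Iterating gives $\bar C_{P^{n}}(x)=x^{q^{dn}}$ and hence $\bar C_{P^{n}-1}(x)=x^{q^{dn}}-x$, whose $q^{dn}$ distinct roots form exactly $\mathbb F_{q^{dn}}$. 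Because $\bar C_{P^{n}-1}$ is separable (derivative $-1$), the reduction map $\Lambda_{P^n-1}\to\mathbb F_{q^{dn}}$ is injective, and since both sides have cardinality $q^{dn}$ it is bijective. It follows that $K_P(\Lambda_{P^n-1})$ and $K_P(\mathbb F_{q^{dn}})$ are unramified extensions of $K_P$ with the same residue field $\mathbb F_{q^{dn}}$, hence they coincide. For the Frobenius assertion, $\varphi$ acts on the residue field as $x\mapsto x^{q^{d}}$, which is precisely the reduction of $C_P$; by injectivity of reduction on $\Lambda_{P^n-1}$ one concludes $\varphi(\lambda)=C_P(\lambda)$ for every $\lambda\in\Lambda_{P^n-1}$.

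For (ii), I combine two observations. Taking the union over $n$ in (i) gives $\bigcup_{n\geq 1} K_P(\Lambda_{P^n-1}) = K_P^{\mathrm{nr}}$, the maximal unramified extension of $K_P$. On the other hand, the computation $C_P(x)\equiv Px+x^{q^{d}}\pmod{P^{2}}$ coming from the factorization $C_P=\sum_k(P,k)\tau^k$ shows that $C_P$, viewed as a power series over $A_P$, is a Lubin--Tate series for the uniformizer $P$ of $K_P$. Therefore the Carlitz module induces on the maximal ideal of $O_{\mathbb C_P}$ the structure of a Lubin--Tate formal $A_P$-module of height $1$, whose torsion tower $\bigcup_n \Lambda_{P^n}$ generates the totally ramified part of $K_P^{\mathrm{ab}}$ associated to $P$, with Galois group isomorphic to $A_P^{\times}$ via the reciprocity map. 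Since every $a\in A\setminus\{0\}$ splits as $a=P^{m}b$ with $\gcd(b,P)=1$ and $\Lambda_a\subseteq\Lambda_{b(P^{m}-1)}\cdot\Lambda_{P^{m}}$ in the appropriate sense, we obtain $K_P(\Lambda)=K_P^{\mathrm{nr}}\cdot K_P(\Lambda_{P^{\infty}})=K_P^{\mathrm{ab}}$, the reverse inclusion being immediate from the fact that each $K(\Lambda_a)/K$ is abelian.

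The main obstacle is checking carefully that the Carlitz module really provides a Lubin--Tate formal $A_P$-module at $P$, i.e.\ verifying that $C_P$ lies in the class of Lubin--Tate series associated to the uniformizer $P$; but this follows once one knows the congruences $(P,0)=P$, $(P,d)=1$ and $(P,k)\in PA$ for $0<k<d$, which are the same congruences already used in part (i). Granted this identification, the final identity $K_P(\Lambda)=K_P^{\mathrm{ab}}$ is a direct application of the classical Lubin--Tate description of the maximal abelian extension of a local field.
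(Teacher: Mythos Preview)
Your argument is correct and follows essentially the same route as the paper: reduce $C_{P^n-1}$ modulo $P$ to $\tau^{dn}-1$, use separability (equivalently, Hensel's lemma) to identify $\Lambda_{P^n-1}$ bijectively with $\mathbb F_{q^{dn}}$ and hence $K_P(\Lambda_{P^n-1})$ with the unramified extension $K_P(\mathbb F_{q^{dn}})$, deduce the Frobenius formula from injectivity of reduction, and then invoke Lubin--Tate theory for $C_P$ together with part (i) to obtain $K_P(\Lambda)=K_P^{\mathrm{ab}}$. One small inaccuracy: the congruence $C_P(x)\equiv Px+x^{q^d}\pmod{P^2}$ is not generally true (the intermediate coefficients $(P,k)$ lie in $PA$ but not in $P^2A$); what you need, and what your coefficient analysis actually gives, are the two Lubin--Tate conditions $C_P(x)\equiv x^{q^d}\pmod{P}$ and $C_P(x)\equiv Px\pmod{x^2}$, which suffice.
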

\begin{proof} This Lemma can be deduced from the work of David Hayes (\cite{HAY}). For the convenience of the reader, we  give a proof of it.\par
\noindent i) We observe that :
$$C_{P^n-1}\equiv \tau^{dn}-1\pmod{PA\{\tau \}}.$$
Thus, by Hensel's Lemma :  $K_P(\Lambda_{P^n-1})=K_P(\mathbb F_{q^{dn}}).$ Furthermore $\varphi (\Lambda_{P^n-1})=\Lambda_{P^n-1}.$ Thus, the second assertion comes from the fact that if $\lambda, \lambda' \in \Lambda_{P^n-1},$ $\lambda\not = \lambda',$ then $v_P(\lambda-\lambda')=0.$\par
\noindent ii) Observe that $C_P$ is a Lubin-Tate polynomial. Thus, assertion ii)  is a consequence of i) and Lubin-Tate theory (see \cite{NEU}, chapter V).
\end{proof}
\noindent Recall that the Carlitz exponential is defined by :
$$\exp_C =\sum_{i\geq 0} \frac{1}{D_i } \tau^i \in K\{ \{ \tau\}\},$$
where $D_0=1$ and for $i\geq 1: D_i= (\theta^{q ^i}-\theta) D_{i-1}^q.$
We have the following equality in $K\{ \{ \tau \}\} :$
$$ \exp_C \theta = C_{\theta} \exp_C.$$
Furthermore, the Carlitz logarithm is defined by :
$$log_C= \sum_{i\geq 0} \frac{1}{\ell_i} \tau ^i \in K\{\{ \tau \}\},$$
where $\ell_0=1,$ and for $i\geq 1:  \ell_i =(\theta -\theta^{q^i})\ell_{i-1}.$ We also have the following equality in $K\{ \{ \tau \}\}:$
$$ \exp_C log_C= log_C \exp_C =1.$$
\begin{lemma}\label{lemma2}${}$\par
\noindent i) $\exp_C$ converges on $\{ x\in \mathbb C_P, v_P(x)>\frac{1}{q^d-1}\}.$  Furthermore, for $x\in \mathbb C_P, $ $v_P(x)>\frac{1}{q^d-1},$ we have $v_P(\exp_C(x))=v_P(x).$\par
\noindent ii) $log_C$ converges on $\frak M_{\mathbb C_P}=\{ x\in \mathbb C_P, v_P(x)>0\}.$ Furthermore, for $x\in \mathbb C_P, $ $v_P(x)>\frac{1}{q^d-1},$ we have $v_P(log_C(x)-x)>v_P(x).$

\end{lemma}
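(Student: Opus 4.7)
The plan is to reduce both parts to explicit formulas for the $P$-adic valuations of $D_i$ and $\ell_i$. For $D_i$ I would invoke the classical identity $D_i = \prod_{a \in A_{+,i}} a$; counting the monic polynomials of degree $i$ divisible by $P^n$ yields
\[
v_P(D_i) = \sum_{n=1}^{\lfloor i/d \rfloor} q^{i-nd} = q^r \cdot \frac{q^{kd}-1}{q^d-1},
\]
where $i = kd + r$ with $0 \leq r < d$. For $\ell_i = \prod_{j=1}^i (\theta - \theta^{q^j})$ I would factor $\theta^{q^j} - \theta = \prod_{a \in \mathbb{F}_{q^j}}(\theta - a)$ and observe that among these factors only $\theta - \zeta_P$ has positive $P$-adic valuation (equal to $1$), and it occurs exactly when $\zeta_P \in \mathbb{F}_{q^j}$, i.e.\ when $d \mid j$. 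Hence $v_P(\theta^{q^j} - \theta) = 1$ if $d \mid j$ and $0$ otherwise, giving $v_P(\ell_i) = \lfloor i/d \rfloor$.

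For part i), the key algebraic identity is
\[
\frac{q^i-1}{q^d-1} - v_P(D_i) = \frac{q^r - 1}{q^d - 1} \geq 0.
\]
Setting $c = v_P(x)$, for every $i \geq 1$ I would then bound
\[
v_P\left(\frac{x^{q^i}}{D_i}\right) - v_P(x) = (q^i - 1)c - v_P(D_i) \geq (q^i - 1)\left(c - \frac{1}{q^d - 1}\right) > 0
\]
whenever $c > 1/(q^d - 1)$. This shows both convergence of $\exp_C(x)$ (the left side tends to $+\infty$ as $i \to \infty$) and the identity $v_P(\exp_C(x)) = v_P(x)$, since by the ultrametric inequality the $i=0$ term governs the valuation of the sum.

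For part ii), convergence of $\log_C$ on $\mathfrak{M}_{\mathbb{C}_P}$ is immediate from $v_P(x^{q^i}/\ell_i) = q^i c - \lfloor i/d \rfloor$, which tends to $+\infty$ for any $c > 0$ because $q^i$ grows much faster than $i/d$. The refined estimate $v_P(\log_C(x) - x) > v_P(x)$ for $c > 1/(q^d - 1)$ follows from the Bernoulli-type bound $(q^i - 1)/(q^d - 1) \geq \lfloor i/d \rfloor$ (equivalent to $q^{kd} \geq 1 + k(q^d - 1)$), which gives $(q^i - 1)c - \lfloor i/d \rfloor \geq (q^i - 1)(c - 1/(q^d - 1)) > 0$ for $i \geq 1$.

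There is no deep obstacle here — the argument is careful bookkeeping — but the subtlety I would flag is the computation of $v_P(\theta^{q^j} - \theta)$: even though all $d$ conjugates $\zeta_P, \zeta_P^q, \ldots, \zeta_P^{q^{d-1}}$ lie in $\mathbb{F}_{q^d} \subset K_P$ and are roots of $P$, only the factor $\theta - \zeta_P$ has positive valuation, because $\zeta_P = \mathrm{sgn}(\theta)$ is the unique Teichmüller representative of $\theta \bmod P$ in $K_P$.
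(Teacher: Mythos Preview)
Your proof is correct and follows exactly the approach of the paper: both reduce the lemma to the explicit formulas $v_P(\ell_i)=\lfloor i/d\rfloor$ and $v_P(D_i)=\dfrac{q^i-q^{i-\lfloor i/d\rfloor d}}{q^d-1}$, which agree with your $q^r\dfrac{q^{kd}-1}{q^d-1}$. The paper simply states these two identities and declares that the lemma follows, whereas you supply the derivations (via $D_i=\prod_{a\in A_{+,i}}a$ and the factorization $\theta^{q^j}-\theta=\prod_{a\in\mathbb F_{q^j}}(\theta-a)$) and carry out the ultrametric estimates explicitly; your flagged subtlety about only $\theta-\zeta_P$ having positive valuation is precisely the point, and your argument for it is sound.
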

\begin{proof}
Let $[x]$ denotes the integer part of $x\in \mathbb R.$ The Lemma  comes from the following facts :
$$\forall i\geq 0, v_P(\ell_i)=[\frac{i}{d}],$$
$$\forall i\geq 0, v_P(D_i)=\frac{q^i-q^{i-[\frac{i}{d}]d }}{q^d-1}.$$
\end{proof}
\noindent We set $\Lambda'=\cup_{n\geq 1} \Lambda_{P^n-1}$ and $\Lambda''=\cup_{n\geq 1}\Lambda_{P^n}\subset \frak M_{\mathbb C_P}.$ We have a direct sum of $A$-modules $\Lambda= \Lambda'\oplus \Lambda ''.$ Observe that $O_{\mathbb C_P}$ has  a structure of  $A$-module via the Carlitz module (denoted by $C(O_{\mathbb C_P})$). Thus Lemma \ref{lemma1} implies that we have a direct sum of $A$-modules :
$$C(O_{\mathbb C_P})= \Lambda'\oplus C(\frak M_{\mathbb C_P}).$$
Furthermore one can show that $C(\frak M_{\mathbb C_P})$ is an $A_P$-module but we will not need this fact.
\begin{lemma}\label{lemma3} ${\rm Tor}_AC(\frak M_{\mathbb C_P})=\Lambda ''$ and :
$${\rm Ker}(log_C: C(\frak M_{\mathbb C_P})\rightarrow \mathbb C_P) =\Lambda ''.$$
\end{lemma}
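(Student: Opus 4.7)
My plan is to treat the two equalities in turn, relying on the decomposition $C(O_{\mathbb C_P}) = \Lambda' \oplus C(\frak M_{\mathbb C_P})$ and on the convergence estimates of Lemma \ref{lemma2}. For the torsion equality, I would start by observing that $\Lambda \subset O_{\mathbb C_P}$ (every $\lambda \in \Lambda_a$ is a root of the monic polynomial $C_a(X) \in A[X]$, hence integral over $A$), and that conversely any torsion element of $C(O_{\mathbb C_P})$ belongs to $\Lambda$. Thus ${\rm Tor}_A\, C(O_{\mathbb C_P}) = \Lambda = \Lambda' \oplus \Lambda''$. By Lemma \ref{lemma1} and Hensel's Lemma, any nonzero $\lambda \in \Lambda_{P^n-1}$ reduces modulo $P$ to a nonzero root of $\tau^{dn}-1$, so $v_P(\lambda) = 0$ and $\Lambda' \cap \frak M_{\mathbb C_P} = \{0\}$. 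Intersecting the direct sum with $C(\frak M_{\mathbb C_P})$ then leaves precisely $\Lambda''$.

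For the kernel equality, the easy direction $\Lambda'' \subset {\rm Ker}(log_C)$ is formal: applying the identity $log_C \cdot C_a = a \cdot log_C$ from $K\{\{\tau\}\}$ to $\lambda \in \Lambda_{P^n}$ yields $P^n log_C(\lambda) = log_C(0) = 0$, and since $P \in \mathbb C_P^\times$ this forces $log_C(\lambda) = 0$. For the converse, let $x \in \frak M_{\mathbb C_P}$ satisfy $log_C(x) = 0$. I plan to exhibit some $n$ with $v_P(C_{P^n}(x)) > 1/(q^d-1)$; the same functorial identity then gives $log_C(C_{P^n}(x)) = P^n log_C(x) = 0$, and since $\exp_C$ is a local bijection on that disk by Lemma \ref{lemma2}, we deduce $C_{P^n}(x) = \exp_C(0) = 0$, whence $x \in \Lambda_{P^n} \subset \Lambda''$.

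The main obstacle is therefore to show that $v_P(C_{P^k}(x)) \to +\infty$ for every $x \in \frak M_{\mathbb C_P}$. Writing $C_P = \sum_{k=0}^d (P,k)\tau^k$ with $(P,0) = P$, $(P,d) = 1$, and $P \mid (P,k)$ for $1 \leq k \leq d-1$ (the Lubin-Tate/Eisenstein property of $C_P$, consistent with Lemma \ref{lemma1}), I would establish, for $y \in \frak M_{\mathbb C_P}$ of valuation $\alpha > 0$, the estimate $v_P(C_P(y)) \geq \min(1+\alpha,\, q^d\alpha)$. When $\alpha < 1/(q^d-1)$ the term $y^{q^d}$ strictly dominates, so $v_P(C_P(y)) = q^d\alpha$, giving strict multiplicative growth; once $\alpha \geq 1/(q^d-1)$ the term $Py$ dominates and $v_P(C_P(y)) \geq 1+\alpha$, so the valuation grows by at least one per iterate (the borderline case $\alpha = 1/(q^d-1)$ poses no difficulty, since any cancellation between the two leading terms can only increase the valuation). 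Iterating, $v_P(C_{P^k}(x))$ surpasses $1/(q^d-1)$ after finitely many steps and then tends to $+\infty$, which completes the reduction.
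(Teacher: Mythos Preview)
Your proposal is correct and follows essentially the same route as the paper's proof. The paper is much terser: it declares the torsion statement ``obvious by our previous discussions'' (i.e.\ the decomposition $C(O_{\mathbb C_P})=\Lambda'\oplus C(\frak M_{\mathbb C_P})$ and $\Lambda=\Lambda'\oplus\Lambda''$ already set up), and for the kernel it simply asserts the existence of $n$ with $v_P(C_{P^n}(x))>\tfrac{1}{q^d-1}$ and then reads off $v_P(\log_C(x))=v_P(C_{P^n}(x))-n$ from Lemma~\ref{lemma2}, which is finite exactly when $C_{P^n}(x)\neq 0$. Your Lubin--Tate/Newton-polygon estimate $v_P(C_P(y))\geq \min(1+\alpha,\,q^d\alpha)$ is precisely what justifies the step the paper leaves implicit, and your concluding use of the local bijectivity of $\exp_C$/$\log_C$ on the small disk is just a rephrasing of the paper's valuation computation.
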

\begin{proof} The first assertion is obvious by our previous discussions. Let $x\in \frak M_{\mathbb C_P}.$ Then there exists an integer $n\geq 0$ such that $v_P(C_{P^n}(x))>\frac{1}{q^d -1}.$ Thus, by Lemma \ref{lemma2}, we have  :
$$v_P(log_C(x)) =v_P(\frac{1}{P^n}log_C(C_{P^n}(x)) = v_P(C_{P^n} (x))-n.$$
The Lemma follows easily.
\end{proof}
\noindent Now, let $x\in C(O_{\mathbb C_P}),$ then there exists an integer $n\geq 1,$ such that $v_P(C_{P^n-1}(x))>0.$ We define the $P$-adic Anderson's logarithm of $x$ (this is an analogue construction as that of the Iwasawa logarithm) as follows:
$$log_{C,P}(x)=\frac{1}{P^n-1}log_C(C_{P^n-1}(x)).$$
Note that $log_{C,P}(x)$ is well-defined. We have immediately by Lemma \ref{lemma3}:
\begin{lemma}\label{lemma4} The map $log_{C,P}: C(O_{\mathbb C_P})\rightarrow \mathbb C_P$ is a morphism of $A$-modules and its kernel is exactly $\Lambda.$

\end{lemma}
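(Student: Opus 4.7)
The plan is to deduce both claims from the corresponding statements for $log_C$ on $C(\frak M_{\mathbb C_P})$ established in Lemmas \ref{lemma2} and \ref{lemma3}, via the defining relation
\[
log_{C,P}(x) = \frac{1}{P^n-1}\, log_C\bigl(C_{P^n-1}(x)\bigr),
\]
for any $n$ large enough that $v_P(C_{P^n-1}(x))>0$. Existence of such $n$ for any $x\in C(O_{\mathbb C_P})$ follows from the decomposition $C(O_{\mathbb C_P})=\Lambda'\oplus C(\frak M_{\mathbb C_P})$ recalled just before the lemma, and independence of the choice of $n$ is ensured by the commutation relation $log_C\cdot C_a = a\cdot log_C$ in $K\{\{\tau\}\}$ together with the identity $C_{P^N-1}=C_{(P^N-1)/(P^n-1)}\circ C_{P^n-1}$ whenever $n\mid N$.

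To show that $log_{C,P}$ is an $A$-module morphism, fix $x,y\in C(O_{\mathbb C_P})$ and $a\in A$, and choose a single $n$ which works simultaneously for $x$, $y$ and $C_a(x)$. The $\mathbb F_q$-linearity of $C_{P^n-1}$ combined with the additivity of $log_C$ on $\frak M_{\mathbb C_P}$ yields $log_{C,P}(x+y)=log_{C,P}(x)+log_{C,P}(y)$. For the $A$-action, $C_{P^n-1}C_a=C_aC_{P^n-1}$ together with $log_C\cdot C_a=a\cdot log_C$ give
\[
log_{C,P}(C_a(x))=\frac{1}{P^n-1}log_C\bigl(C_a(C_{P^n-1}(x))\bigr)=\frac{a}{P^n-1}log_C\bigl(C_{P^n-1}(x)\bigr)=a\cdot log_{C,P}(x).
\]

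For the kernel, the inclusion $\Lambda\subseteq\ker log_{C,P}$ is verified on the two summands $\Lambda=\Lambda'\oplus\Lambda''$ and then extended by the additivity just proved: if $x\in\Lambda_{P^n-1}\subset\Lambda'$ then $C_{P^n-1}(x)=0$ makes $log_{C,P}(x)$ vanish by construction, while if $x\in\Lambda''\subset\frak M_{\mathbb C_P}$ then $C_{P^n-1}(x)\in\Lambda''$ (since $\Lambda''$ is $A$-stable) lies in $\ker log_C$ by Lemma \ref{lemma3}. For the converse inclusion, if $log_{C,P}(x)=0$ then $log_C(C_{P^n-1}(x))=0$ with $C_{P^n-1}(x)\in\frak M_{\mathbb C_P}$; Lemma \ref{lemma3} forces $C_{P^n-1}(x)\in\Lambda''$, so some $m\geq 0$ gives $C_{P^m(P^n-1)}(x)=0$ and $x\in\Lambda_{P^m(P^n-1)}\subseteq\Lambda$. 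The main conceptual point, rather than any technical obstacle, is that the Iwasawa-style construction takes us cleanly from the small-disk kernel computation of Lemma \ref{lemma3} to the full kernel on $C(O_{\mathbb C_P})$ precisely because $\Lambda'$ consists of the $(P^n-1)$-torsion for varying $n$ as furnished by Lemma \ref{lemma1}, so nothing outside $\Lambda$ can be killed by the renormalized logarithm.
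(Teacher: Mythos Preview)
Your proof is correct and follows the same line as the paper, which simply asserts that the lemma is immediate from Lemma~\ref{lemma3}; you have carefully spelled out the well-definedness, $A$-linearity, and kernel computation that the paper leaves implicit.
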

\section{Non-vanishing of David Goss $P$-adic Dirichlet $L$-functions}${}$\par
Let $P$ be a prime of $A$ of degree $d.$ Let $\chi$ be a Dirichlet character of type $1$ and conductor $f.$ Recall that :
$$g(\chi)= -\sum_{\sigma \in \Delta_f}\chi^{-1}(\sigma)\sigma (\lambda_f).$$
Let $\sigma =\tau\otimes 1 $ on $A[\lambda_f]\otimes_{\mathbb F_q}\mathbb F_q(\chi),$ then :
$$\sigma (g(\chi))= (\chi (\theta)-\theta) g(\chi).$$
Now, let $\chi$ be a Dirichlet character of type $s\geq 0$ and conductor $f,$  then the above equation and the definition of $g(\chi)$ imply that there exist $\zeta_1, \cdots, \zeta_s \in \overline{\mathbb F_q}$ such that:
$$\sigma (g(\chi))= (\zeta_1-\theta)\cdots (\zeta_s-\theta) g(\chi),$$
where $\sigma =\tau \otimes 1 $ on $A[\lambda_f ]\otimes_{\mathbb F_q} \mathbb F_q(\zeta_1, \ldots, \zeta_s).$ Furthermore, one sees easily that if $a\in A$ is relatively prime to $f,$ then :
$$\chi(a)= \chi (\sigma_a) = a(\zeta_1)\cdots a(\zeta_s).$$
By Theorem \ref{Preltheorem2}, we can set  :
$$L_P(1, \chi)= L_P(t_1, \ldots, t_s) \mid_{t_1= \zeta_1, \ldots, t_s= \zeta_s} \in A_P[\chi].$$
Observe that is $s\not \equiv 1 \pmod{q-1},$ we have :
$$L_P(1, \chi)=\sum_{k\geq 0} \sum_{a\in A_{+,k} , a\not \equiv 0\pmod{P}}\frac{\chi (a)}{a} ,$$
where $\chi$ is viewed as defined modulo its conductor and $\chi (a)=0$ if $a$ is not relatively prime to $f,$ and if $s\equiv 1\pmod{q-1} :$
$$L_P(1, \chi)=\sum_{k\geq 0} \sum_{a\in A_{+,k} , a\not \equiv 0\pmod{P}}k\frac{\chi (a)}{a}  .$$
Our aim in this section is to prove the following Theorem :
\begin{theorem}
\label{theorem1} Let $\chi$ be a Dirichlet character of type $s$ and assume $q\geq 3$ if $s=0.$ Then :
$$L_P(1, \chi)\not =0.$$
\end{theorem}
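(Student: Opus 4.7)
The plan is to reduce the non-vanishing of $L_P(1,\chi)$ to a non-torsion statement about an Anderson--Stark unit attached to $\chi$. By Proposition \ref{Prelproposition2} we already know that $L_P(t_1,\ldots,t_s)$ has Gauss $P$-adic valuation zero, hence is nonzero as a power series; the obstacle is that evaluation at the specific algebraic point $(\zeta_1,\ldots,\zeta_s)$ prescribed by $\chi$ could a priori vanish. I would split into three cases: (a) $s=0$ with $\chi$ trivial and $q\geq 3$; (b) $s\geq 1$ with $s\not\equiv 1\pmod{q-1}$; (c) $s\geq 1$ with $s\equiv 1\pmod{q-1}$.

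For cases (b) and (c) the main input is the explicit formula of Theorem \ref{theorem2} (to be proved in the next section), which I expect to take the shape
\begin{equation*}
L_P(1,\chi)\cdot g(\chi) \;=\; \kappa_\chi \cdot \log_{C,P}(u_\chi)
\end{equation*}
for a nonzero scalar $\kappa_\chi \in \overline{K}^\times$ and an element $u_\chi \in C(O_{\mathbb{C}_P})$ built from the Anderson--Stark unit $u(t_1,\ldots,t_s;z)|_{z=1}$ (respectively from $\tfrac{d}{dz}u(t_1,\ldots,t_s;z)|_{z=1}$ in case (c), which accounts for the trivial zero of Proposition \ref{Prelproposition1}), evaluated at $t_i = \zeta_i$. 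The Gauss--Thakur sum is a unit, since $g(\chi)g(\chi^{-1})=(-1)^{\deg f} f$, so by Lemma \ref{lemma4} the non-vanishing of $L_P(1,\chi)$ reduces to the assertion $u_\chi\notin\Lambda$, i.e.\ that $u_\chi$ is not a Carlitz-torsion element.

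The main obstacle is precisely this non-torsion step, since the stated goal is to avoid Bosser's $v$-adic Baker--Brumer theorem. My strategy is an $\infty$-adic/$P$-adic comparison: the same Anderson--Stark unit admits the global Pellarin-type identity recalled in Proposition \ref{Preltheorem1}, namely $L(t_1,\ldots,t_s)\omega(t_1)\cdots\omega(t_s)/\widetilde\pi = \pm\mathbb{B}(t_1,\ldots,t_s)$, from which one can read off the $\infty$-adic valuation of $u_\chi$ at $(\zeta_1,\ldots,\zeta_s)$ and observe that it is incompatible with the tightly constrained $\infty$-adic valuations forced upon nonzero elements of $\Lambda_a$ (which are rational multiples of $v_\infty(\widetilde\pi)$ determined by $\deg a$). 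Since Carlitz-torsion is a global property of an element of $\overline{K}$, non-torsion at the $\infty$-adic place implies $u_\chi \notin \Lambda$ at the $P$-adic place as well.

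Finally, the degenerate case (a) is handled by direct computation: here there is no Gauss--Thakur sum and no Anderson--Stark unit, and the non-vanishing of $L_P(1,\chi_{\mathrm{triv}})$ amounts to a standard Carlitz--Goss zeta evaluation whose non-triviality is known classically under the hypothesis $q\geq 3$, the excluded case $q=2$ being precisely where the relevant Bernoulli--Goss type identity forces a cancellation.
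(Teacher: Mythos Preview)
Your reduction has a genuine gap. The formula that Theorem \ref{theorem2} actually proves is \emph{not} of the shape $L_P(1,\chi)g(\chi)=\kappa_\chi\,\log_{C,P}(u_\chi)$ with a single logarithm on the right; it is the $\chi$-isotypic projection
\[
L_P(1,\chi)g(\chi)=(1-P^{-1}\chi(P))\,\frac{1}{|\Delta_f|}\sum_{\mu\in\Delta_f}\chi^{-1}(\mu)\,\log_{C,P}\bigl(\mu(u_{[\chi]}(1))\bigr),
\]
a \emph{linear combination} of $P$-adic logarithms of the Galois conjugates of $u_{[\chi]}(1)$. Showing that a single element is non-torsion (hence has nonzero $\log_{C,P}$ by Lemma \ref{lemma4}) does \emph{not} prevent this particular linear combination from vanishing. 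That is precisely the place where the earlier approach in \cite{ANG&TAE1} invoked Bosser's $P$-adic Baker--Brumer theorem, and it is exactly what the present paper is trying to avoid. Your $\infty$-adic valuation idea, even if it established non-torsion of $u_\chi$, would still leave you facing a linear-independence-of-logarithms statement.

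The paper's route around this is a two-step argument that you are missing entirely. First, one inverts the relation and reads it as
\[
\log_{C,P}(u_{[\chi]}(1))=\sum_{\psi\in[\chi]}\Bigl(1-\tfrac{\psi(P)}{P}\Bigr)^{-1}L_P(1,\psi)\,g(\psi),
\]
so that non-torsion of the single element $u_{[\chi]}(1)$ (established via Proposition \ref{proposition1} and a parity/Thakur argument, not via $\infty$-adic valuations) yields $L_P(1,\psi)\neq 0$ only for \emph{some} $\psi$ in the Frobenius orbit $[\chi]=\{\chi^{q^i}\}$. Second, one propagates non-vanishing along the orbit by a change of variables: passing from $K=\mathbb F_q(\theta)$ to $K^{(q)}=\mathbb F_q(\theta^q)$ and using that the restriction isomorphism $\Delta_f\simeq\Delta^{(q)}_{f(\theta^q)}$ gives $L_P^{(q)}(1,\chi)=\bigl(L_P(1,\chi^{1/q})\bigr)^q$, so $L_P(1,\chi)\neq 0\Rightarrow L_P(1,\chi^{1/q})\neq 0$. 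This descent step is the new idea that replaces Baker--Brumer, and it is absent from your plan.
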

Note that the case of characters of conductor $P$ and $s\not \equiv 1\pmod{q-1}$ is treated in \cite{ANG&TAE1} and the   non-vanishing result  uses Vincent Bosser's $P$-adic Baker Brumer Theorem (see the appendix of \cite{ANG&TAE1}). In what follows, we propose a new approach which does not use the $P$-adic Baker-Brumer Theorem. The proof of Theorem \ref{theorem1} is quite  technical and we first need some preliminary results.\par
\noindent Let $s\geq 0$ be an integer. We will now work in $K[t_1, \ldots, t_s][[z]].$ Let 
$$\tau : K[t_1, \ldots, t_s][[z]]\rightarrow K[t_1, \ldots, t_s][[z]]$$
 be the continuous morphism (for the $z$-adic topology) of $\mathbb F_q[t_1,\ldots, t_s][[z]]$-algebras such that $$\forall x\in K, \tau(x)=x^q.$$  Set :
$$log_{s,z}=\sum_{k\geq 0} \frac{b_i(t_1)\cdots b_i(t_s)}{\ell_k}z^k \tau ^k,$$
where for $j=1, \ldots, s,$ $b_0(t_j)=1$ and for $i\geq 1:$ $b_i(t_j)=\prod_{k=0}^{i-1} (t_j-\theta^{q ^k}).$ 
Then, Corollary 5.12  in \cite{APTR} implies that there exists $u(t_1, \ldots, t_s;z) \in A[t_1, \ldots, t_s,z]$ such that we have the following equality in $K[t_1, \ldots, t_s][[z]]:$
$$\sum_{k\geq 0}\sum_{a\in A_{+,k}}\frac{a(t_1)\cdots a(t_s)}{a} z^k = log_{s,z} u(t_1, \ldots, t_s;z).$$
We have the following properties for the "Anderson-Stark units" $u(t_1, \ldots,t_s;z):$
\begin{lemma}\label{lemma5}${}$\par
\noindent i)  If $0\leq s\leq q-1,$ $u(t_1, \ldots, t_s;z)=1.$\par
\noindent ii) If $s\geq q, $ $s\equiv 1 \pmod{q-1},$ $u(t_1, \ldots, t_s;z)\mid_{z=1}= 0.$\par
\noindent iii) If $s\geq 1,$ we have :
$$u(t_1, \ldots, t_s;z)\mid_{t_s=\theta}= \sum_{k\geq 0}\sum_{a\in A_{+,k}}a(t_1)\cdots a(t_{s-1}) z^k.$$
In particular if $s\geq q, s\equiv 1\pmod{q-1}:$
$$\frac{d}{dz}(u(t_1, \ldots, t_s;z))\mid_{z=1}\not =0.$$
\end{lemma}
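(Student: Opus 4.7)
The plan is to prove (iii), then (ii), then the ``in particular'' half of (iii), and finally (i); the common thread is that specializing at $t_s=\theta$ collapses the operator $\log_{s,z}$ to the identity.

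For the first assertion of (iii), I would specialize the defining identity
\[
\sum_{k\geq 0}\sum_{a\in A_{+,k}}\frac{a(t_1)\cdots a(t_s)}{a}\,z^k \;=\; \log_{s,z}\bigl(u(t_1,\dots,t_s;z)\bigr)
\]
at $t_s=\theta$. For every $k\geq 1$ the polynomial $b_k(\theta)=\prod_{j=0}^{k-1}(\theta-\theta^{q^j})$ contains the factor $\theta-\theta=0$, so $\log_{s,z}|_{t_s=\theta}$ collapses to the identity operator. Using $a(\theta)=a$ for $a\in A$ to simplify the left-hand side gives the stated formula.

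For (ii), the key observation is that the specialization $\exp_{s,z}|_{z=1}$ is precisely $\exp_\phi$, the exponential of the module $\phi_\theta=(t_1-\theta)\cdots(t_s-\theta)\tau+\theta$ from the introduction. Set $\Omega=\omega(t_1)\cdots\omega(t_s)$. Using $\tau(\omega(t_i))=(t_i-\theta)\omega(t_i)$, a direct computation yields
\[
\Omega\,\phi_\theta\,\Omega^{-1} \;=\; \tau+\theta \;=\; C_\theta,
\]
hence $\exp_\phi=\Omega^{-1}\exp_C\,\Omega$ and therefore
\[
u(t_1,\dots,t_s;z)|_{z=1} \;=\; \Omega^{-1}\,\exp_C\!\bigl(\Omega\cdot L(t_1,\dots,t_s)\bigr).
\]
The Mazur--Wiles type identity recalled in the introduction (\cite{APTR}, Theorem 7.7) asserts $\Omega\cdot L(t_1,\dots,t_s)=(-1)^{(s-1)/(q-1)}\mathbb{B}(t_1,\dots,t_s)\widetilde\pi$ with $\mathbb{B}\in A[t_1,\dots,t_s]$. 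Since $\exp_C$ is $\mathbb{F}_q[t_1,\dots,t_s]$-linear, annihilates $\widetilde\pi$, and intertwines multiplication by $a\in A$ with the Carlitz action $C_a$, the element $\widetilde\pi\,\mathbb{B}$ lies in the kernel of $\exp_C$ on $\mathbb{T}_s(\mathbb{C}_\infty)$, so $u|_{z=1}=0$.

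For the ``in particular'' part of (iii), (ii) implies $(z-1)\mid u(z)$ in $A[t_1,\dots,t_s,z]$; write $u(z)=(z-1)v(z)$ so that $u'(1)=v(1)$. By the first part of (iii), $u|_{t_s=\theta}(z)=\sum_{k=0}^{d} S_k(t_1,\dots,t_{s-1})\,z^k$ where $d=(s-1)/(q-1)$ and $S_k=\sum_{a\in A_{+,k}} a(t_1)\cdots a(t_{s-1})$, the sum being finite by Lemma~4 of \cite{ANG&PEL2}. A short manipulation gives $u'(1)|_{t_s=\theta}=\sum_{k=1}^d k\,S_k$. Evaluating at $t_1=\cdots=t_{s-1}=0$: for $k\geq 2$, $S_k(0,\dots,0)=q^{k-1}\sum_{c\in\mathbb{F}_q}c^{s-1}\equiv 0\pmod p$, whereas $S_1(0,\dots,0)=\sum_{c\in\mathbb{F}_q}c^{s-1}=-1$ since $s-1\equiv 0\pmod{q-1}$ and $s-1>0$. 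Hence $u'(1)|_{t_s=\theta,\,t_i=0}=-1\neq 0$, forcing $u'(1)\neq 0$ in $A[t_1,\dots,t_s]$.

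For (i), applying (iii) with $s\leq q-1$ yields $u|_{t_s=\theta}=S_0=1$, since $s-1<q-1$ makes every $S_k$ with $k\geq 1$ vanish by \cite{ANG&PEL2} Lemma~4; by the evident symmetry of $u$ in $(t_1,\dots,t_s)$ we also have $u|_{t_i=\theta}=1$ for each $i$. To conclude $u=1$, I would invoke the Carlitz--Pellarin power sum identity
\[
\sum_{a\in A_{+,k}} \frac{a(t_1)\cdots a(t_s)}{a} \;=\; \frac{b_k(t_1)\cdots b_k(t_s)}{\ell_k} \qquad (s\leq q-1),
\]
which is equivalent to $L(t_1,\dots,t_s;z)=\log_{s,z}(1)$ and hence $u(z)=\exp_{s,z}(L(z))=1$. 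The main obstacle is precisely this last step in (i): the divisibility $\prod_i(t_i-\theta)\mid(u-1)$ flowing from the diagonal evaluations does not suffice on its own, so one needs either the explicit power sum identity above or a sharp a priori degree bound on $u$ in $(t_1,\dots,t_s,z)$ to eliminate a nontrivial multiple of $\prod_i(t_i-\theta)$.
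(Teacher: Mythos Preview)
Your treatment of (iii) and (ii) is correct and in places more explicit than the paper's. For the first assertion of (iii) the paper uses the same collapse of the operator at $t_s=\theta$, phrased via $\exp_{s,z}$ rather than $\log_{s,z}$. For the nonvanishing in (iii) the paper evaluates at $t_1=\cdots=t_s=\theta$ to obtain the Bernoulli--Goss number $\beta(s-1)\neq 0$, whereas you evaluate at $t_1=\cdots=t_{s-1}=0$, $t_s=\theta$; both work. One caution on (ii): the identity $\Omega\cdot L(t_1,\dots,t_s)=\pm\mathbb B\,\widetilde\pi$ from \cite{APTR}, Theorem~7.7, is itself derived from the vanishing $u|_{z=1}=0$, so your argument is circular as written; the primitive input from \cite{APTR}, section~7, is the determination of $\ker\exp_\phi$ on $\mathbb T_s(K_\infty)$, from which $u|_{z=1}=0$ follows directly.

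The real gap is (i), and you have located it precisely: the diagonal evaluations only give $\prod_i(t_i-\theta)\mid(u-1)$, and the power sum identity you propose is equivalent to the statement being proved. The paper closes this much more simply than either of your suggested fixes. Working in the Tate algebra in $t_1,\dots,t_s,z$ over $K_\infty$ one has $v_\infty(L_{s,z})=0$ and, for $i\geq 1$,
\[
v_\infty\!\left(\frac{b_i(t_1)\cdots b_i(t_s)}{D_i}\,z^i\right)=iq^i-s\,\frac{q^i-1}{q-1}\;\geq\;(i-1)q^i+1\;\geq\;1
\]
whenever $0\leq s\leq q-1$. Hence $v_\infty\bigl(\exp_{s,z}(L_{s,z})-1\bigr)\geq 1$. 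Since $u\in A[t_1,\dots,t_s,z]$ and any nonzero element of $A[t_1,\dots,t_s,z]$ has Gauss valuation $\leq 0$, this forces $u=1$. This valuation estimate is exactly the ``sharp a priori bound'' you were seeking, obtained by controlling $\infty$-adic size rather than degrees.
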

\begin{proof} Let's set :
$$\exp_{s,z} =\sum_{i \geq 0} \frac{b_i(t_1)\cdots b_i(t_s)}{D_i}z^i \tau ^i.$$
We work in the Tate algebra in the variables $t_1, \ldots, t_s, z$ with coefficients in $K_{\infty}.$
Set :
$$L_{s,z}= \sum_{k\geq 0} \sum_{a\in A_{+,k}}\frac{a(t_1)\cdots a(t_s)}{a} z^k.$$
Then by   \cite{APTR},  Corollary 5.12 , we have   :
$$u(t_1, \ldots, t_s; z)= \exp_{s,z}(L_{s,z}).$$
Furthermore, by \cite{APTR} Lemma 3.9 and \cite{ANG&PEL2} Proposition 6,  this is an equality between entire functions in the variables $t_1, \ldots, t_s,z.$
Thus we get the first assertion of iii) . Fo the second assertion of iii), observe that :
$$\frac{d}{dz}(u(t_1, \ldots, t_s;z))\mid_{z=1, t_1=\theta, \ldots , t_s=\theta }= \beta (s-1)\not =0,$$
where $\beta (i)$ denotes the $i$th Bernoulli-Goss number.\par
\noindent For ii), it is a consequence of \cite{APTR}, section 7. 
For i), observe that :
$$v_{\infty} (\frac{b_i(t_1)\cdots b_i(t_s)}{D_i}z^i)=i q^i-s\frac{q^i-1}{q-1}.$$
Therefore, if $0\leq s\leq q-1,$ we get :
$$v_{\infty} (\exp_{s,z}(L_{s,z}) -1)\geq 1.$$
Since $u(t_1, \ldots, t_s,z)\in A[t_1, \ldots , t_s, z],$ we get i).
\end{proof}
\begin{proposition}\label{proposition1} ${}$\par
\noindent Let $s\geq q.$ Write $s=q+r+\ell (q-1),$ $\ell \in \mathbb N,$ $r\in \{ 0, \ldots, q-2\}.$Then $u(t_1, \ldots, t_s;z)$ viewed as a polynomial in $\theta$ is of degree $s(\frac{q^{\ell}-1}{q-1})-\ell q^{\ell}$ if $r=0$ and $s(\frac{q^{\ell +1}-1}{q-1})-(\ell +1)q^{\ell +1}$ if $r\not =0.$ Furthermore the leading coefficient of $u(t_1,\ldots, t_s;z)$ viewed as a polynomial in $\theta $ is :
$$ {\rm if }\, r=0, (-1)^{\ell}z^{\ell}(1-z).$$
$${\rm if}\, r\not =0, (-1)^{s(\ell +1)}z^{\ell +1}     .$$
\end{proposition}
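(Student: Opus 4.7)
The plan is to expand $u$ using the formula already exploited in the proof of Lemma~\ref{lemma5}:
$$u(t_1,\ldots,t_s;z)=\exp_{s,z}(L_{s,z})=\sum_{i\geq 0} c_i\,z^i\,\tau^i(L_{s,z}),\qquad c_i:=\frac{b_i(t_1)\cdots b_i(t_s)}{D_i},$$
with $L_{s,z}=\sum_{k\geq 0}V_k\,z^k$ and $V_k=\sum_{a\in A_{+,k}}a(t_1)\cdots a(t_s)/a$, and to read off the $\theta$-degree and leading coefficient from the Gauss valuation $v_\infty$ (normalised by $v_\infty(\theta)=-1$ and $v_\infty(t_j)=v_\infty(z)=0$), using $\deg_\theta u=-v_\infty(u)$.

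The first step is a control on $L_{s,z}$: since each $a\in A_{+,k}$ has $v_\infty(a)=-k$ while $a(t_j)\in \mathbb F_q[t_j]$ has $v_\infty(a(t_j))=0$, every summand of $V_k$ has Gauss valuation $k$, whence $v_\infty(V_k)\geq k$. In particular $L_{s,z}=1+R$ with $v_\infty(R)\geq 1$, and since $\tau$ multiplies $v_\infty$ by $q$, one gets $\tau^i(L_{s,z})=1+E_i$ with $v_\infty(E_i)\geq q^i$. The next step is to compute $f(i):=v_\infty(c_i)$: since $b_i(t_j)$ has leading $\theta$-term $(-1)^i\theta^{(q^i-1)/(q-1)}$ and $D_i$ is monic in $\theta$ of degree $iq^i$, one finds $f(i)=iq^i-s(q^i-1)/(q-1)$, and the leading Laurent term of $c_i$ in $\theta^{-1}$ is $(-1)^{is}\theta^{-f(i)}$. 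Writing $s=q+r+\ell(q-1)$, a direct calculation yields
$$f(i+1)-f(i)=q^i\bigl[(i-\ell)(q-1)-r\bigr],$$
so $f$ strictly decreases up to $i=\ell$, then $f(\ell+1)-f(\ell)=-rq^\ell$, and $f(\ell+2)-f(\ell+1)=q^{\ell+1}(q-1-r)>0$. Consequently, if $r\neq 0$ the minimum of $f$ is attained uniquely at $i=\ell+1$, and if $r=0$ it is attained precisely at $i=\ell$ and $i=\ell+1$ with a common value; translating $-f$ back to a $\theta$-degree gives exactly the two formulas stated in the proposition.

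For the leading $\theta$-coefficient, decompose $c_iz^i\tau^i(L_{s,z})=c_iz^i+c_iz^iE_i$; the error term has $v_\infty\geq f(i)+q^i>f(i)$, so the leading $v_\infty$-contribution to $u$ comes only from $\sum_{i\text{ minimising }f} c_iz^i$, whose leading Laurent coefficient is the sum of $(-1)^{is}z^i$ over those $i$. When $r\neq 0$ this is $(-1)^{s(\ell+1)}z^{\ell+1}$, as claimed. When $r=0$ it is $(-1)^{\ell s}z^\ell+(-1)^{(\ell+1)s}z^{\ell+1}$; since $r=0$ forces $s=q+\ell(q-1)$, which is odd whenever $q$ is odd (and in characteristic $2$ all signs equal $1$ while $1-z=1+z$), this simplifies in every characteristic to $(-1)^\ell z^\ell(1-z)$. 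The thing to watch, and the only genuine technicality, is to rule out any accidental $\theta$-cancellation with contributions from other indices; this is precisely where the strict separation $f(i)>f_{\min}$ for every non-minimising $i$ (visible from the difference formula) is used, together with the non-vanishing of the two leading coefficients above as polynomials in $z$, which ensures that the asserted $\theta$-degree is really attained.
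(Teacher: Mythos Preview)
Your proof is correct and follows essentially the same route as the paper's: expand $u=\exp_{s,z}(L_{s,z})$, use $v_\infty(L_{s,z}-1)\geq 1$, compute $v_\infty$ of each term $c_iz^i\tau^i(L_{s,z})$ via $f(i)=iq^i-s(q^i-1)/(q-1)$, locate the minimum of $f$ from the difference formula, and read off the leading $\theta$-term. The paper only writes out the case $r=0$ and leaves $r\neq 0$ to the reader, and it leaves the identification $(-1)^{s\ell}=(-1)^\ell$ implicit; your treatment of both cases and of the parity of $s$ when $r=0$ (including the characteristic-$2$ remark) makes these points explicit but adds nothing new to the underlying argument.
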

\begin{proof} We prove the result for $r=0,$ the proof of the Proposition being similar in the remaining cases. We will use freely the results  in \cite{APTR}. Let $s\geq q,$ $s\equiv 1\pmod{q-1}.$
Let $\phi$ be the Drinfeld module defined on the Tate algebra in the variables $t_1, \ldots, t_s, z$ with coefficients in $K_{\infty}$  such that $\phi_{\theta}= z(t_1-\theta)\cdots (t_s-\theta) \tau +\theta.$
Recall that :
$$u:=u(t_1, \ldots, t_s;z)=\exp_{\phi}L,$$
where :
$$L=\sum_{k\geq 0} \sum_{a\in A_{+,k}} \frac{a(t_1)\cdots a(t_s)}{a} z^k.$$
In particular $v_{\infty}(L-1)\geq 1.$ Recall that :
$$\exp_{\phi}=\sum_{i\geq 0} \frac{b_i(t_1)\cdots b_i(t_s)z^i}{D_i}\tau ^i.$$
We say that an  element $f$ in $\mathbb F_q(t_1, \ldots, t_s,z)((\frac{1}{\theta}))$ is " monic" if there exists  an integer $m\in \mathbb Z$ such that :
$$v_{\infty}(f- \theta^m )> v_{\infty}(f).$$
For $i\geq 0$ observe that $\tau^i(L)$ and $\frac{b_i(t_1)\cdots b_i(t_s)}{D_i}$ are almost monic. Recall that :
$$\forall i\geq 0, v_{\infty}(\frac{b_i(t_1)\cdots b_i(t_s)z^i}{D_i}\tau^i (L))=i q^i-s(\frac{q^i-1}{q-1}) .$$
Set $\alpha_i= (-1)^{s i}\frac{b_i(t_1)\cdots b_i(t_s)}{D_i}\tau^i (L),$  then $\alpha_i$ is  monic. We have:
$$u=\sum_{i\geq 0}(-1)^{s i}\alpha_i z ^i.$$
For $i\geq 0,$ we have :
$$((i+1) q^{i+1}-s(\frac{q^{i+1}-1}{q-1})) -(i q^i-s(\frac{q^i-1}{q-1}))=q^i(q+i(q-1)-s)    .$$
Write $s=q+\ell (q-1), $ $\ell \in \mathbb N.$ We get :
$$v_{\infty}(u-((-1)^{s\ell }\alpha_{\ell} z^{\ell }+ (-1)^{s(\ell+1)}\alpha_{\ell +1}z^{\ell +1}))>v_{\infty}(u).$$
But $\alpha_{\ell}$ and $\alpha_{\ell +1}$ are monic, thus:
$$v_{\infty}(u-(z^{\ell}(1-z)(-1)^{s \ell}\theta^{\ell q^{\ell}-s(\frac{q^{\ell}-1}{q-1})})>v_{\infty}(u).$$
This proves the  assertion. 
\end{proof}
\noindent Let $\chi$ be a Dirichlet character of type $s$ and conductor $f.$ Recall that there exists $\zeta_1, \ldots, \zeta_s\in \overline{\mathbb F_q}$ such that :
$$\sigma (g(\chi))=(\zeta_1-\theta)\cdots  (\zeta_s-\theta) g(\chi),$$
where $\sigma =\tau \otimes 1$ on $A[\lambda_f]\otimes_{\mathbb F_q} \mathbb F_q(\zeta_1, \ldots, \zeta_s).$
We set :
$$u_{\chi}(z) = g(\chi)u(t_1, \ldots, t_s; z)\mid_{t_1=\zeta_1, \ldots , t_s=\zeta_s}\in g(\chi)\mathbb  F_q(\zeta_1, \ldots , \zeta_s)[z].$$
Let $[\chi]= \{ \chi^{q^i}, i\geq 0\}.$ Observe that for $\psi \in [\chi],$ $\psi$ is of type $s$ and conductor $f.$ We set :
$$u_{[\chi]}(z)=\sum_{\psi \in [\chi]} u_{\psi}(z) \in A[\lambda_f][z],$$
and :
$$u^{(1)}_{[\chi]}(z)= \frac{d}{dz} u_{[\chi]}(z).$$

\begin{theorem}\label{theorem2}
Let $\chi$ be a Dirichlet character of type $s$ and conductor $f.$\par
\noindent 1) If $s\not \equiv 1\pmod{q-1},$ then :
$$L_P(1,\chi)g(\chi) =(1-P^{-1}\chi (P))\frac{1}{\mid \Delta_f\mid }\sum_{\mu \in \Delta_f}\chi^{-1}(\mu)log_{C,P} (\mu(u_{[\chi]}(1)))                           .$$
2) If $s\geq q, $ $s\equiv 1\pmod{q-1},$  then :
$$   L_P(1, \chi) g(\chi)=(1-P^{-1}\chi (P))\frac{1}{\mid \Delta_f\mid }\sum_{\mu \in \Delta_f}\chi^{-1}(\mu)log_{C,P} (\mu(u^{(1)}_{[\chi]}(1)))                       .$$
3) If $s=1,$ we have :
$$   L_P(1,\chi)g(\chi )=(1-P^{-1}\chi (P))\frac{1}{\mid \Delta_f\mid }\frac{1}{\theta- \chi(\theta)}\sum_{\mu \in \Delta_f}\chi^{-1}(\mu) log_{C,P}( \mu (\lambda_f^q))                        .$$
\end{theorem}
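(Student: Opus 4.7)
The plan is to derive the three formulas from the ``Stark unit'' identity $L_{s,z}(t_1,\ldots,t_s) = \log_{s,z}(u(t_1,\ldots,t_s;z))$ in $K[t_1,\ldots,t_s][[z]]$ (\cite{APTR}, Corollary 5.12) by specializing $t_i\mapsto\zeta_i$, multiplying by $g(\chi)$, dealing with the Euler factor at $P$, and averaging over $\Delta_f$.

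The central computation, which drives everything else, is a commutation. Iterating $\sigma(g(\chi))=(\zeta_1-\theta)\cdots(\zeta_s-\theta)g(\chi)$ gives $\sigma^i(g(\chi)) = b_i(\zeta_1)\cdots b_i(\zeta_s)\,g(\chi)$, so that for any $f\in A[t_1,\ldots,t_s][[z]]$,
\begin{equation*}
g(\chi)\log_{s,z}(f)\big|_{t_i=\zeta_i} \;=\; \sum_{i\geq 0}\frac{z^i}{\ell_i}\sigma^i\bigl(g(\chi)f|_{t_i=\zeta_i}\bigr).
\end{equation*}
Applied to $f=u$ this turns the Stark-unit identity into a formal equation $g(\chi)L_0(\zeta;z)=\log_C^z(u_\chi(z))$, where $L_0(\zeta;z)$ denotes $L_{s,z}$ at $t_i=\zeta_i$, $u_\chi(z):=g(\chi)u(\zeta_i;z)$, and $\log_C^z:=\sum_{i\ge 0}\ell_i^{-1}z^i\sigma^i$ specializes at $z=1$ to the ordinary Carlitz logarithm $\log_C$. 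Multiplying by the Euler factor $1-\chi(P)z^d/P$ (using $\chi(P)=P(\zeta_1)\cdots P(\zeta_s)$ when $P\nmid f$, and observing that $\chi(P)=0$ in the opposite case) produces the prefactor $1-P^{-1}\chi(P)$.

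Next I specialize at $z=1$, splitting into three cases. In case (1), $u_\chi(1)\ne 0$ and direct evaluation gives $g(\chi)L_P(1,\chi)=(1-\chi(P)/P)\log_C(u_\chi(1))$. In case (2), Lemma~\ref{lemma5}(ii) gives $u_\chi(1)=0$, so I differentiate in $z$ first; the contribution $\sum_i i\,\ell_i^{-1}\sigma^i(u_\chi(1))$ vanishes and one is left with $\log_C(u_\chi^{(1)}(1))$, yielding the analogous formula with $u_\chi^{(1)}(1)$ replacing $u_\chi(1)$. In case (3) ($s=1$), Lemma~\ref{lemma5}(i) gives $u\equiv 1$, so $u_\chi(z)=g(\chi)$ is constant in $z$; the derivative is $\sum_i i\,\ell_i^{-1}\sigma^i(g(\chi))$. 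Expanding $g(\chi)=-\sum_\mu\chi^{-1}(\mu)\mu(\lambda_f)$ and using $\sigma(\lambda_f)=\lambda_f^q$, together with a telescoping in the spirit of the functional equation $\log_C^z\circ\phi_\theta^z=\theta\log_C^z$ satisfied by $\log_C^z$, I expect this sum to rewrite as $-(\theta-\chi(\theta))^{-1}\sum_\mu\chi^{-1}(\mu)\log_C(\mu(\lambda_f^q))$; the factor $(\theta-\chi(\theta))^{-1}$ arising from the Leibniz differentiation of $\log_C^z\phi_\theta^z$ at $z=1$ is precisely what produces the extra denominator in case (3).

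The final step is to reinterpret the formal Carlitz logarithm $P$-adically as Anderson's $\log_{C,P}$: since the arguments $u_\chi(1)$, $u_\chi^{(1)}(1)$, and $\mu(\lambda_f^q)$ all lie in $C(O_{\mathbb C_P})$ via the fixed embedding of $\overline{K}$ into $\mathbb C_P$, the identity $\log_C\circ C_{P^n-1}=(P^n-1)\log_C$ together with Lemma~\ref{lemma2} lets one reinterpret $\log_C$ as $\log_{C,P}$ throughout (a key ingredient in case (3) being $\log_{C,P}(\lambda_f)=0$ for torsion elements, which kills the undifferentiated $\log_C(g(\chi))$ term). Galois-averaging then closes the argument: because $\mu(g(\psi))=\psi(\mu)g(\psi)$ and $\mu$ fixes $A$, one has $\mu(u_\psi)=\psi(\mu)u_\psi$, so character orthogonality gives $u_\chi(\bullet)=|\Delta_f|^{-1}\sum_\mu\chi^{-1}(\mu)\mu(u_{[\chi]}(\bullet))$, and the $A$-linearity together with Galois-equivariance of $\log_{C,P}$ yield the three formulas. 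I expect the hardest part to be this last step: rigorously promoting the formal identity to a convergent $P$-adic equation through the Iwasawa-style construction of $\log_{C,P}$, and carrying out cleanly the explicit telescoping in case (3) that produces the factor $(\theta-\chi(\theta))^{-1}$.
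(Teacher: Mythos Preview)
Your proposal is correct and follows essentially the same route as the paper's proof: start from the Stark-unit identity in $K[t_1,\ldots,t_s][[z]]$, specialize $t_i\mapsto\zeta_i$, multiply by $g(\chi)$ to convert $\log_{s,z}$ into a $\sigma$-twisted Carlitz logarithm, insert the Euler factor, and then evaluate or differentiate at $z=1$. The only notable difference is in the bookkeeping: the paper sums over the Frobenius orbit $[\chi]$ \emph{before} the analytic step so as to land in $K[\lambda_f][[z]]$ (where $\sigma=\tau$ and the twisted logarithm becomes the honest $\log_{C,z}$), and then projects by $\chi^{-1}$; you instead keep $u_\chi$ and average at the very end, which is equivalent but forces you to carry the $\sigma$-version of $\log_C$ until the averaging identifies it with $\log_{C,P}$ on elements of $A[\lambda_f]$. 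For case~(3), the paper's concrete device is to reindex by $\sigma_{\theta+\lambda}$ for a suitable $\lambda\in\mathbb F_q$ prime to $f$ and then use the operator identity $\log_{C,z}^{(1)}(z\tau+\theta)-\theta\,\log_{C,z}^{(1)}=-\log_{C,z}\tau$, which is precisely the ``telescoping'' you anticipated and is what produces the factor $(\theta-\chi(\theta))^{-1}$.
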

\begin{proof}  We first  work in $K[t_1, \ldots, t_s][[z]].$ Let $\tau : K[t_1, \ldots, t_s][[z]]\rightarrow K[t_1, \ldots, t_s][[z]]$ be the continuous morphism (for the $z$-adic topology)  of $\mathbb F_q[t_1,\ldots, t_s][[z]]$-algebras such that $\forall x\in K, \tau(x)=x^q.$  Set :
$$log_{s,z}=\sum_{k\geq 0} \frac{b_i(t_1)\cdots b_i(t_s)}{\ell_k}z^k \tau ^k.$$
Then recall  that  we have the following equality in $K[t_1, \ldots, t_s][[z]]:$
$$\sum_{k\geq 0}\sum_{a\in A_{+,k}}\frac{a(t_1)\cdots a(t_s)}{a} z^k = log_{s,z} u(t_1, \ldots, t_s;z).$$
Let $\sigma =\tau \otimes 1$ on $K[\lambda_f] \otimes_{\mathbb F_q}\mathbb F_q (\zeta_1, \ldots, \zeta_s).$  And set :
$$  log_{\sigma, z}=\sum_{k\geq 0}\frac{1}{\ell_k} z^k \sigma^k.                                                            $$
If we specialize $t_i$ in $\zeta_i$ and if we multiply the formula by $g(\chi),$ we have the following  equality in $K[\lambda_f]\otimes_{\mathbb F_q}\mathbb F_q(\zeta_1, \ldots, \zeta_s))[[z]]:$
$$\sum_{k\geq 0}\sum_{a\in A_{+,k}}\frac{\chi(a)}{a} g(\chi)z^k = log_{\sigma,z} u_{\chi}(z).$$
Recall that $\Delta_f={\rm Gal}(K(\lambda_f)/K),$ and  that for $\mu \in \Delta_f,$ we have :
$$\mu (g(\chi))=\chi(\mu)g(\chi).$$
Now, set :
$$log_{C,z}=\sum_{k\geq 0}\frac{1}{\ell_k} z^k \tau^k.$$
We get the following equality in $K[\lambda_f][[z]]$ ($\tau$ acts trivially on $z$):
$$\sum_{\psi \in  [\chi]} \sum_{k\geq 0}\sum_{a\in A_{+,k}}\frac{\psi(a)}{a} g(\psi)z^k= log_{C,z} u_{[\chi]}(z).$$
By the properties of the Gauss-Thakur sums, we deduce that :
$$\mid \Delta_f\mid \sum_{k\geq 0}\sum_{a\in A_{+,k}}\frac{\chi(a)}{a} g(\chi)z^k= \sum_{\mu \in \Delta_f}\chi^{-1}(\mu)log_{C,z} \mu(u_{[\chi]}(z)).$$
Therefore:
$$\mid \Delta_f\mid  \, g(\chi)  \sum_{k\geq 0}\sum_{a\in A_{+,k}}\frac{\chi(a)}{a} z^k =\sum_{\mu \in \Delta_f}\chi^{-1}(\mu)log_{C,z} \mu(u_{[\chi]}(z)).$$
And finally :
$$g(\chi ) \sum_{k\geq 0}\sum_{\substack{a\in A_{+,k} \\ a\not \equiv 0\pmod{P}}}\frac{\chi(a)}{a} z^k = (1-P^{-1}\chi (P)z^d)\frac{1}{\mid \Delta_f\mid }\sum_{\mu \in \Delta_f}\chi^{-1}(\mu)log_{C,z} \mu(u_{[\chi]}(z)).$$ 
Recall that :
$$\theta log_{C, z}= log_{C,z} (z\tau +\theta).$$
Now, since $u_{[\chi]}(1)$ can be a $P$-adic unit, using the functional equation of $log_{C,z}$ if necessary, we get if $\chi$ is even (i.e. $s\not \equiv 1\pmod{q-1}$) :
$$L_P(1,\chi)g(\chi) =(1-P^{-1}\chi (P))\frac{1}{\mid \Delta_f\mid }\sum_{\mu \in \Delta_f}\chi^{-1}(\mu)log_{C,P} (\mu(u_{[\chi]}(1))).$$ 
Observe that $u_{[\chi]}(1) \in \exp_C(K(\lambda_f)\otimes_KK_{\infty})$, i.e. $u_{[\chi]}(1)$ is a Taelman unit (see \cite{TAE1}, \cite{TAE2}). More precisely, we have the following equality in $K(\lambda_f)\otimes_KK_{\infty}:$
$$\exp_C(\sum_{\psi \in [\chi]} L(1, \psi) g(\psi))= u_{[\chi]}(1),$$
where $\tau : K(\lambda_f)\otimes_KK_{\infty}\rightarrow K(\lambda_f)\otimes_KK_{\infty}, \alpha \mapsto \alpha^q.$ 
Now, let's treat the case where  $\chi$ is odd (i.e. $s\equiv 1\pmod{q-1}$). If $s\geq q,$ by Lemma \ref{lemma5} we have  $u_{[\chi]}(1)=0.$  Using again the same technique, we get in this case:
$$L_P(1, \chi) g(\chi)=(1-P^{-1}\chi (P))\frac{1}{\mid \Delta_f\mid }\sum_{\mu \in \Delta_f}\chi^{-1}(\mu)log_{C,P} (\mu(u^{(1)}_{[\chi]}(1))).$$ 
Thus, it remains to treat the case $s=1$ . First, by \cite{THA} Proposition I   and Lemma \ref{lemma5}:
$$u_{[\chi]}(z)=\lambda_f.$$
Therefore, in this case :
$$g(\chi) \sum_{k\geq 0}\sum_{\substack{a\in A_{+,k}\\ a\not \equiv 0\pmod{P}}}\frac{\chi(a)}{a} z^k = (1-P^{-1}\chi (P)z^d)\frac{1}{\mid \Delta_f\mid }\sum_{\mu \in \Delta_f}\chi^{-1}(\mu)log_{C,z} \mu(\lambda_f).$$ 
Now, select $\lambda \in \mathbb F_q$ such that $f$ is relatively prime to $\theta +\lambda.$ Then :
$$\sum_{\mu \in \Delta_f}\chi^{-1}(\mu)log_{C,z} \mu(\lambda_f) = \chi ^{-1}(\theta+\lambda)
\sum_{\mu \in \Delta_f}\chi^{-1}(\mu)log_{C,z}C _{\theta+\lambda}(\mu(\lambda_f)).$$
Thus $g(\chi) (\chi(\theta+\lambda)-(\theta +\lambda))\sum_{k\geq 0}\sum_{a\in A_{+,k}, a\not \equiv 0\pmod{P}}\frac{\chi(a)}{a} z^k$ is equal to:
$$ (1-P^{-1}\chi (P)z^d)\frac{1}{\mid \Delta_f\mid }\sum_{\mu \in \Delta_f}\chi^{-1}(\mu)(log_{C,z} C_{\theta+\lambda }(\mu(\lambda_f))- (\theta +\lambda) log_{C,z} \mu(\lambda_f)).$$ 
But $s=1,$ thus  $\chi (\theta +\lambda)= \chi (\theta) +\lambda.$ Thus, 
$$g(\chi)(\chi(\theta)-\theta)\sum_{k\geq 0}\sum_{a\in A_{+,k}, a\not \equiv 0\pmod{P}}\frac{\chi(a)}{a} z^k$$
 is equal to :
$$ (1-P^{-1}\chi (P)z^d)\frac{1}{\mid \Delta_f\mid }\sum_{\mu \in \Delta_f}\chi^{-1}(\mu)(log_{C,z} C_{\theta }(\mu(\lambda_f))- \theta  log_{C,z} \mu(\lambda_f)).$$ 
Set :
$$log^{(1)}_{C,z}= \sum_{k\geq 0}\frac{k}{\ell_k} z^{k-1} \tau^k.$$
Then  :
$$log^{(1)}_{C,z} (z\tau +\theta) -\theta log^{(1)}_{C,z}= -log_{C,z} \tau.$$
Therefore, for $s=1,$ we get :
$$g(\chi) L_P(1,\chi)=(1-P^{-1}\chi (P))\frac{1}{\mid \Delta_f\mid }\frac{1}{\theta- \chi(\theta)}\sum_{\mu \in \Delta_f}\chi^{-1}(\mu) log_{C,P}( \mu (\lambda_f^q)) .$$
\end{proof}

${}$\par
\noindent {\sl Proof of Theorem \ref{theorem1} :}
We first treat the case where $\chi$ is even (i.e. $s\not \equiv 1\pmod{q-1}$). Note that  by Proposition \ref{proposition1}, we have :
$$u_{[\chi]}(1)\not =0.$$
Since $\chi$ is even this implies that $u_{[\chi]}(1)$ is not a torsion point for the Carlitz module, in particular :
 $$log_{C,P}(u_{[\chi]}(1))\not =0.$$
 But, by the proof of Theorem \ref{theorem2}, we have  :
 $$log_{C,P}(u_{[\chi]}(1))=\sum_{\psi \in [\chi]}(1-\frac{\psi(P)}{P})^{-1}L_P(1, \psi) g(\psi) .$$
 This implies that there exists $\psi \in [\chi]$ such that :
 $$L_P(1, \psi)\not =0.$$
 Thus, we have to prove that if $L_P(1, \chi) \not =0,$ then $L_P(1, \chi^{\frac{1}{q}})\not =0.$ We are going to prove it by performing a change of variables.\par 
 \noindent 
 \noindent Set ${K^{(q)}}=\mathbb F_q(\theta^q), $ observe that $K/K^{(q)} $ is totally ramified at every place of $K^{(q)}.$ Let ${C^{(q)}}$ be the Carlitz module for $A^{(q)}:=A^q=\mathbb F_q[\theta^q], $ i.e. ${C^{(q)}}_{\theta^q}= \tau +\theta ^q.$ Let $\widetilde{\lambda_f}=\exp_{{C^{(q)}}} (\frac{\widetilde{\pi }^{(q)}}{f(\theta^q)}).$ Then :
 $$K(\lambda_f) =K(\lambda_{f(\theta^q)}^{(q)}).$$
 Furthermore, the restriction map induces an isomorphism of groups :
 $$\Delta_f \simeq \Delta_{f(\theta^q)}^{(q)}.$$
 This implies that :
 $$\forall \sigma \in \Delta_f, \chi (\sigma) =\chi(\sigma^{(q)}).$$
 Now $L_P(1, \chi)\not =0$ implies that $L_P^{(q)}(1, \chi)\not =0.$
 But :
 $$L_P^{(q)}(1, \chi)= (L_P(1, \chi^{\frac{1}{q}}))^q.$$
 Thus :
 $$L_P(1,\chi^{\frac{1}{q}})\not =0.$$
Now, we turn to the case where $\chi$ is odd of type $\geq q.$ Again, by Proposition \ref{proposition1}, we get :
$$u^{(1)}_{[\chi]}(1) \not =0.$$
Since $\chi$ is not of type $1,$ by \cite{THA} Proposition I,  $u^{(1)}_{[\chi]}(1)$ is not a torsion point,  in particular:
 $$log_{C,P}(u^{(1)}_{[\chi]}(1))\not =0.$$
Now by the proof of Theorem \ref{theorem2}:
$$log_{C,P}(u^{(1)}_{[\chi]}(1))= \sum_{\psi \in [\chi]} (1-\frac{\psi (P)}{P})^{-1}L_P(1, \psi) g(\psi) .$$
We can conclude as in the even case.
It remains to treat the case $s=1.$ Observe that $\lambda_f^q$ is not a torsion point for the Carlitz module. Furthermore by Theorem \ref{theorem2}, we have :
$$     log_{C,P}(\lambda_f^q)= \sum_{\psi\in [\chi]}(\theta-\psi(\theta))(1-\frac{\psi(P)}{P})^{-1} L_P(1, \psi)g(\psi).$$                                                      
Again, we can conclude as in the even case. This achieves the proof of Theorem \ref{theorem1}.


\section{The special function $\omega_P$}
Let $P$ be a prime of $A$ of degree $d\geq 1.$  In this section, we extend \cite{ANG&PEL1}, Theorem 2.9, to the $P$-adic case. Let's consider $\mathbb C_P[t,\frac{1}{P(t)}]$ equipped with the usual Gauss $P$-adic valuation $v_P$ and let $\mathcal A$ be the completion of $\mathbb C_P[t,\frac{1}{P(t)}]$ for $v_P.$ Instead of working in $P$-adic Tate algebras as in \cite{APTR}, we will work in the affinoid algebra $\mathcal A$  (see \cite{PUT} chapter 3). Let $\tau : \mathcal A\rightarrow \mathcal A$ be the continuous morphism of $\mathbb F_q[t, \frac{1}{P(t)}]$-algebras such that :
$$\forall x \in \mathbb C_P, \tau (x)=x^q.$$
For $j=0, \ldots, d-1,$ we set  (see section 2.2.  for the definition of $\omega(\mathbb F_{q^d}; y; Z)$):
$$\omega_{j,P}(t) =\tau^j(\omega(\mathbb F_{q^d}; \frac{1}{t-\zeta_P}; {\theta-\zeta_P}))\in \mathbb F_{q^d}[t, \frac{1}{P(t)}][[P]]^\times\subset \mathcal A.$$
We also set :
$$\omega_P(t) =\prod_{j=0}^{d-1} \omega_{j,P}(t).$$
Observe that :
$$\omega_P(t) =\prod_{j\geq 0}(1-\frac{(\theta-\zeta_P)^{q^j}}{t-\zeta_P^{q^j}})^{-1}.$$
And also :
$$\tau (\omega_P(t))= (\frac{t-\theta}{t-\zeta_P})\, \omega_P(t).$$
We fix $\gamma \in \mathbb C_P$ such that :
$$\gamma ^{q^d-1}= \zeta_P-\theta .$$
\begin{theorem}\label{theorem3}
Let $\chi$ be a Dirichlet character of type $1.$ Let $f$ be the conductor of $\chi.$\par
\noindent 1) If $f\not = P.$ Then :
$$ g(\chi)= sgn(g(\chi)) (\omega_P\mid_{t=\chi(\theta)}).$$
Furthermore :
$$sgn(g(\chi))^{q^{deg f}-1}= \prod_{j=0}^{deg f-1} (\chi(\theta) -\zeta_P^{q^j}).$$
In particular :
$$sgn(g(\chi^{-1}))g(\chi)= (-1)^{\rm deg f}f(\zeta_P)(\omega_P\mid_{t=\chi(\theta)}).$$
2) If $f=P,$ write $\chi =\chi_P^{q^i},$ $i\in \{ 0, \ldots, d-1\}.$ Then:
$$   g(\chi)=  \gamma ^{q^i} sgn (\frac{g(\chi)}{\gamma ^{q ^i}}){P'(\zeta_P)^{-q^i}} (\prod_{j=0, j\not = i}^{d-1} \omega_{j,P})\mid_{t= \chi (\theta)}                                        .$$
Furthermore :
$$sgn (\frac{g(\chi)}{\gamma ^{q ^i}})^{q^d -1}=P'(\zeta_P^{q^i}).$$
\end{theorem}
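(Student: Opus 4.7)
The proof combines two Frobenius identities. First, by expanding $g(\chi)^q = -\sum_\sigma (\chi^q)^{-1}(\sigma)\sigma(\lambda_f^q)$ and using $\lambda_f^q = \sigma_\theta(\lambda_f) - \theta\lambda_f$ (from the Carlitz relation $C_\theta = \tau + \theta$), one obtains
\[
g(\chi)^q = \bigl(\chi(\theta)^q - \theta\bigr)\,g(\chi^q).
\]
Second, the functional equation $\tau(\omega_P(t)) = \tfrac{t-\theta}{t-\zeta_P}\omega_P(t)$ in $\mathcal A$, together with the elementary identity $f(\alpha)^q = (\tau f)(\alpha^q)$ in $\mathbb C_P$, gives
\[
\omega_P(\alpha)^q = \frac{\alpha^q-\theta}{\alpha^q-\zeta_P}\,\omega_P(\alpha^q)
\]
for any $\alpha\in\mathbb C_P$ where $\omega_P$ is regular.

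\emph{Part 1 ($f\neq P$).} Form $\alpha_\chi := g(\chi)/\omega_P(\chi(\theta))\in\mathbb C_P^\times$. The two formulas above combine to give $\alpha_\chi^q = (\chi(\theta)^q-\zeta_P)\,\alpha_{\chi^q}$. Iterating $d':=\deg f$ times and using $\chi^{q^{d'}}=\chi$ and $\chi(\theta)^{q^{d'}}=\chi(\theta)$, the recursion telescopes to
\[
\alpha_\chi^{q^{d'}-1} = \prod_{k=0}^{d'-1}\bigl(\chi(\theta)-\zeta_P^{q^k}\bigr)\in\overline{\mathbb F_q}^\times.
\]
Algebraic closedness of $\overline{\mathbb F_q}$ then forces $\alpha_\chi\in\overline{\mathbb F_q}^\times$. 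The product expansion of $\omega_P$ shows each factor at $t=\chi(\theta)$ is congruent to $1$ modulo the maximal ideal of $O_{\mathbb C_P}$, so $\mathrm{sgn}(\omega_P(\chi(\theta)))=1$ and therefore $\alpha_\chi = \mathrm{sgn}(g(\chi))$. This is the first identity; the second is the displayed telescoped formula; and the third follows by multiplying by $\mathrm{sgn}(g(\chi^{-1}))$ and invoking $g(\chi)g(\chi^{-1}) = (-1)^{\deg f}\,f$ together with $\mathrm{sgn}(f) = f(\zeta_P)$.

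\emph{Part 2 ($f=P$).} Here $\chi(\theta) = \zeta_P^{q^i}$ is a pole of $\omega_{i,P}$, so the truncated product $W_\chi := \prod_{j\neq i}\omega_{j,P}(\chi(\theta))$ replaces $\omega_P(\chi(\theta))$, and $\gamma$ with $\gamma^{q^d-1}=\zeta_P-\theta$ absorbs the ramification. A Newton-polygon computation gives $v_P(g(\chi)) = q^i/(q^d-1) = v_P(\gamma^{q^i})$, so $\beta_\chi := g(\chi)/\gamma^{q^i}\in O_{\mathbb C_P}^\times$. Using the cyclic shift $\tau\omega_{j,P} = \omega_{j+1,P}$ for $j<d-1$, the wrap-around $\tau\omega_{d-1,P} = \tfrac{t-\theta}{t-\zeta_P}\omega_{0,P}$, and $\gamma^{q^d} = \gamma(\zeta_P-\theta)$, I would derive the piecewise recursion $\beta_\chi^q = (\zeta_P^{q^{i+1}}-\theta)\,\beta_{\chi^q}$ for $i<d-1$ and $\beta_{\chi_P^{q^{d-1}}}^q = \beta_{\chi_P}$. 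Reducing modulo the maximal ideal and iterating $d$ times gives the second identity $\mathrm{sgn}(\beta_\chi)^{q^d-1} = P'(\zeta_P^{q^i})$. For the first identity, I would check that $\beta_\chi$ and the proposed right-hand side satisfy the same $\tau$-recursion, so their ratio is $\tau^d$-invariant and hence lies in $\mathbb F_{q^d}$; matching residues modulo the maximal ideal via the identity $\mathrm{sgn}((\theta-\zeta_P)/P) = P'(\zeta_P)^{-1}$ explains the prefactor $P'(\zeta_P)^{-q^i}$ and forces the ratio to equal $1$.

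The main obstacle is the bookkeeping in Part~2: one must simultaneously coordinate (i) the cyclic $\tau$-action on the $d$ factors $\omega_{j,P}$, including the wrap-around $\omega_{d-1,P}\mapsto \tfrac{t-\theta}{t-\zeta_P}\omega_{0,P}$, (ii) the Kummer-type normalization $\gamma^{q^d-1}=\zeta_P-\theta$, and (iii) the residue correction $P'(\zeta_P)^{-q^i}$, so that all contributions cancel cleanly to pin the scalar in $\overline{\mathbb F_q}^\times$.
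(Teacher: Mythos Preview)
Your argument for Part~1 is correct and is in fact cleaner than the paper's. The paper iterates $\tau^{\deg f}$ on $g(\chi)$ alone to obtain $g(\chi)^{q^{\deg f}-1}=b_{\deg f}(\zeta)$, then chooses a large multiple $n$ of $d\deg f$, uses the $P$-adic limit $\lim_{k\to\infty}g(\chi)^{q^{kn}}=\mathrm{sgn}(g(\chi))$, and separately expands $b_{nk}(\zeta)=P(\zeta)^{nk/d}\prod_{i,j}(1-\tfrac{(\theta^{q^j}-\zeta_P^{q^j})^{q^{di}}}{\zeta-\zeta_P^{q^j}})$ to recognise the $\omega_{j,P}(\zeta)$. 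Your device of forming the ratio $\alpha_\chi=g(\chi)/\omega_P(\chi(\theta))$ up front and showing its $(q^{d'}-1)$-th power already lies in $\overline{\mathbb F_q}^\times$ (whence $\alpha_\chi\in\overline{\mathbb F_q}^\times$ and then $\alpha_\chi=\mathrm{sgn}(\alpha_\chi)=\mathrm{sgn}(g(\chi))$) bypasses the limit and the factorisation of $b_{nk}$ entirely; the paper's route, on the other hand, makes explicit how $\omega_P$ arises from the polynomials $b_n(t)$.

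For Part~2 your sketch is sound but more laborious than necessary. The paper does not track single $\tau$-steps through the cycle $\omega_{0,P}\to\omega_{1,P}\to\cdots$; instead it applies $\tau^d$ directly to $g(\chi)$ to get $\tau^d(g(\chi))=b_d(\chi(\theta))g(\chi)$, and then factors
\[
b_d(\zeta_P^{q^i})=P'(\zeta_P^{q^i})\,(\zeta_P-\theta)^{q^i}\prod_{j\neq i}\Bigl(1-\tfrac{(\theta-\zeta_P)^{q^j}}{\zeta_P^{q^i}-\zeta_P^{q^j}}\Bigr),
\]
so that dividing by $\gamma^{q^i}$ (with $\gamma^{q^d-1}=\zeta_P-\theta$) removes the ramified factor and immediately gives $\tau^d(\beta_\chi)=P'(\zeta_P^{q^i})\prod_{j\neq i}(\cdots)\,\beta_\chi$; both identities then follow by the same limit argument as in Part~1. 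This avoids the piecewise recursion and wrap-around you describe, and in particular makes the appearance of $P'(\zeta_P^{q^i})=\prod_{m\neq i}(\zeta_P^{q^i}-\zeta_P^{q^m})$ transparent rather than requiring the telescoping computation your route entails.
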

\begin{proof}${}$\par
\noindent 1)  Let $\zeta=\chi(\theta).$
We have :
$$\tau(g(\chi))= (\zeta^q -\theta)g(\chi^q).$$
Thus:
$$\tau^{deg f}(g(\chi))= b_{deg f}(\zeta) g(\chi).$$
Since  $f\not = P,$  $g(\chi)$ is a $P$-adic unit and this gives the second assertion of 1). Let $n= (q^{deg f}-1)d\, deg f$ then  (see for example \cite{ROS}, Theorem 12.10.) :
$$\mathbb F_{q^d}(\zeta, sgn(\lambda_f))\subset \mathbb F_{q^n}.$$  We get for any $k\geq 0 :$
$$g(\chi)^{q^{kn}}= \tau^{kn}(g(\chi))= b_{nk}(\zeta) g(\chi).$$
Now, observe that :
$$\lim_{k\rightarrow +\infty} g(\chi)^{q^{kn}}= sgn(g(\chi)).$$
Now we turn our attention to $b_{nk}(t):$
$$b_{nk}(t)=\prod_{j=0}^{nk-1} (t-\theta^{q^k}).$$
Since $d$ divides $n,$ we can write the above product :
$$b_{nk}(t)=\prod_{i=0}^{nk/d-1}\prod_{j=0}^{d-1}(t-\zeta_P^{q^j}- (\theta^{q^j}-\zeta_P^{q^j})^{q^{di}}).$$
Thus:
$$b_{nk}(t) =P(t)^{nk/d}\prod_{i=0}^{nk/d-1}\prod_{j=0}^{d-1}(1- \frac{(\theta^{q^j}-\zeta_P^{q^j})^{q^{di}}}{t-\zeta_P^{q^j}}).$$
Observe that  $P(\zeta) \in \mathbb F_{q^{deg f}}^\times.$ By our choice of $n:$
$$P(t)^{nk/d}\mid_{t=\zeta}=1.$$ Now, for $j=0, \ldots, d-1,$ we have  :
$$\omega_{j,P}(t)=\prod_{k\geq 0}(1- \frac{(\theta^{q^j}-\zeta_P^{q^j})^{q^{dk}}}{t-\zeta_P^{q^j}})^{-1}.$$
We get the  first formula of 1). For the last assertion of 1), use the fact that :
$$g(\chi)g(\chi^{-1})= (-1)^{deg f} f.$$
\noindent 2) Now let $\chi =\chi_P^{q^i}, $ $i\in \{ 0, \ldots, d-1\}.$ Then as in the proof of part 1):
$$\tau^d(g(\chi))= b_d(\chi(\theta))g(\chi).$$
But:
$$b_d(\chi(\theta))=P'(\zeta_P^{q^i})( \zeta_P-\theta)^{q^i}\prod_{j=0, j\not = i}^{d-1}(1-\frac{(\theta-\zeta_P)^{q^j}}{\zeta_P^{q^i}-\zeta_P^{q^j}}).$$
Thus :
$$\tau^d(\frac{g(\chi)}{\gamma ^{q ^i}})= P'(\zeta_P^{q^i})(\prod_{j=0, j\not = i}^{d-1}(1-\frac{(\theta-\zeta_P)^{q^j}}{\zeta_P^{q^i}-\zeta_P^{q^j}}))\frac{g(\chi)}{\gamma ^{q ^i}}.$$
We conclude as in the proof of part 1).
\end{proof}

\section{The Spiegelungssatz}
Let $P$ be a prime of $A$ of degree $d\geq 1.$ Our aim in this section is to show that the $P$-adic $L$-functions $L_P(t_1,\ldots, t_s), s\geq 1,$ verify a particular kind of functional identity which can be seen as a Spiegelungssatz for $P$-adic $L$-series.

\subsection{The case $s=1$}${}$\par
Let $\varepsilon_P =1$ if $P=\theta$ and zero otherwise. Let $\mathcal B\subset \mathbb C_P[[t+\varepsilon_P]]$ be the set of power series in $t+\varepsilon_P$ with bounded coefficients and let $v_P$ be the Gauss $P$-adic valuation on $\mathcal B.$ Let $\tau :\mathcal B\rightarrow \mathcal B$ be the continuous morphism of $\mathbb F_q[[t+\varepsilon_P]]$-algebras such that:
$$\forall x\in \mathbb C_P, \tau(x)=x^q.$$
We set $\mathcal B^0= O_{\mathbb C_P}[[t+\varepsilon_P]]\subset \mathcal B.$ We extend the function $sgn : O_{\mathbb C_P} \rightarrow \overline{\mathbb F_q}$ into a continuous morphism of $\overline{\mathbb F_q}[[t+\varepsilon_P]]$-algebras :
$$sgn : \mathcal B^0 \rightarrow \overline{\mathbb F_q}[[t+\varepsilon_P]].$$
We set :
$$\mathcal B^{00} ={\rm Ker}\,  sgn .$$
Observe also that :
$$\mathcal A\subset \mathcal B.$$
We set :
$$\mu(t) =\sum_{j\geq0}sgn(\lambda_{(\theta+\varepsilon_P)^{j+1}})(t+\varepsilon_P)^j.$$
Note that :
$$\mu(t) \in \mathcal B^0.$$
We have :
$$\tau (\mu(t)) = (t-\zeta_P) \mu(t).$$
Therefore :
$$\tau (\omega_P(t)\mu(t))=(t-\theta)\omega_P(t)\mu(t),$$
and :
$$\omega_P(t)\mu(t)\in \mathcal B^0
.$$
\begin{lemma}\label{lemma6}
$\mu(t)$ is transcendental over ${\rm Frac}(\mathcal A).$ In particular $\omega_P(t)\mu(t)$ is transcendental over ${\rm Frac}(\mathcal A).$
\end{lemma}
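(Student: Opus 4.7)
The plan is to assume for contradiction that $\mu(t)$ is algebraic over $\mathrm{Frac}(\mathcal{A})$ of some minimal degree $n\geq 1$, with minimal polynomial $X^n+a_{n-1}X^{n-1}+\cdots+a_0$ (and $a_0\neq 0$ since $\mu\neq 0$). Applying $\tau$ to the relation and using the functional equation $\tau(\mu)=(t-\zeta_P)\mu$, then dividing by $(t-\zeta_P)^n$ and invoking the uniqueness of the minimal polynomial, I would extract for each $i$ the identity $\tau(a_i)=(t-\zeta_P)^{n-i}a_i$; in particular
$$\tau(a)=(t-\zeta_P)^n\, a\quad\text{with}\quad a:=a_0\in\mathrm{Frac}(\mathcal{A})^\times.$$
It therefore suffices to show that no nonzero element of $\mathrm{Frac}(\mathcal{A})$ can satisfy such a relation for any $n\geq 1$.

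To handle this, I would view $a$ as a meromorphic function on the affinoid $X=\Spec(\mathcal{A})$, whose rigid points are $\{\alpha\in\mathbb C_P:v_P(\alpha)\geq 0,\, v_P(P(\alpha))\leq 0\}$ — a closed unit disk with open disks of radius $1$ removed around each $\zeta_P^{q^j}$. The key observation is that for $h\in\mathrm{Frac}(\mathcal{A})^\times$ the Frobenius $\sigma:x\mapsto x^q$ on $X$ satisfies $\Div(\tau(h))=\sigma_*\Div(h)$. Since $(t-\zeta_P)\in\mathcal{A}^\times$ (its unique zero lies outside $X$, because $1/(t-\zeta_P)=\prod_{j\geq 1}(t-\zeta_P^{q^j})/P(t)\in\mathcal{A}$), the divisor of $(t-\zeta_P)^n$ is trivial and the equation forces $\sigma_*\Div(a)=\Div(a)$. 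As the support is finite and every nonfinite $\sigma$-orbit on rigid points is infinite, $\mathrm{supp}(\Div(a))\subset\overline{\mathbb F_q}\cap X$. Writing $\Div(a)=\sum_i m_i[\alpha_i]$ and setting $g=\prod_i(t-\alpha_i)^{m_i}\in\mathrm{Frac}(\mathcal{A})^\times$, the quotient $f:=a/g$ has trivial divisor on $X$ and thus lies in $\mathcal{A}^\times$; the $\sigma$-invariance of the multi-set $\{(\alpha_i,m_i)\}$ gives $\tau(g)=g$, whence $\tau(f)=(t-\zeta_P)^n f$.

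The final step is a reduction modulo $\mathcal{A}^{00}$. Combining the equation with $v_P(\tau(f))=q\,v_P(f)$ yields $v_P(f)=0$, so $f\in\mathcal{A}^0$ reduces to a unit $\bar f\in(\mathcal{A}^0/\mathcal{A}^{00})^\times=\overline{\mathbb F_q}[t,1/P(t)]^\times$ still satisfying $\tau(\bar f)=(t-\zeta_P)^n\bar f$. Because $P(t)=\prod_{j=0}^{d-1}(t-\zeta_P^{q^j})$ in $\overline{\mathbb F_q}[t]$, the units of $\overline{\mathbb F_q}[t,1/P(t)]$ are exactly $c\prod_{j=0}^{d-1}(t-\zeta_P^{q^j})^{k_j}$ with $c\in\overline{\mathbb F_q}^\times$ and $k_j\in\mathbb Z$. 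Writing $\bar f$ in this form and comparing the exponent of each irreducible factor on the two sides of $\tau(\bar f)=(t-\zeta_P)^n\bar f$ — noting that $\tau$ cyclically shifts these factors via $j\mapsto j+1\pmod d$ — forces the relations $k_0=k_1=\cdots=k_{d-1}$ together with $k_{d-1}=n+k_0$, whence $n=0$, a contradiction.

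I expect the main obstacle to lie in the rigid-analytic step: verifying cleanly that $\Div(\tau h)=\sigma_*\Div h$ on $X$ and that a meromorphic function with trivial divisor on $X$ automatically lies in $\mathcal{A}^\times$. Both rest on the one-dimensionality and smoothness of the affinoid $X$ and deserve careful justification. The concluding assertion on $\omega_P(t)\mu(t)$ is immediate: since $\omega_P(t)\in\mathcal{A}^\times\subset\mathrm{Frac}(\mathcal{A})^\times$, multiplication by $\omega_P(t)$ preserves transcendence over $\mathrm{Frac}(\mathcal{A})$.
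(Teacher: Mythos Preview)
Your argument is sound (modulo the rigid--analytic facts you flag), but it takes a substantially longer route than the paper's. The paper exploits a feature you do not use: $\mu(t)$ is itself $sgn$-fixed, since it already lies in $\overline{\mathbb F_q}[[t+\varepsilon_P]]$. So rather than extracting the equation $\tau(a_0)=(t-\zeta_P)^n a_0$ and solving it in $\mathrm{Frac}(\mathcal A)$ via divisors on the affinoid, the paper simply clears denominators in a putative algebraic relation so that the coefficients lie in $\mathcal A^0$ with at least one of valuation zero, and then applies $sgn$ to the whole relation. This yields a nontrivial algebraic relation for $\mu(t)$ over $\overline{\mathbb F_q}[t,1/P(t)]$, reducing the problem to transcendence over $\overline{\mathbb F_q}(t)$, which follows easily from $\tau^{dn}(\mu)=P(t)^n\mu$ (e.g.\ by the same norm/unit comparison you carry out at the end, but now in the elementary ring $\overline{\mathbb F_q}(t)$ rather than in $\mathcal A$). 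In effect your final reduction modulo $\mathcal A^{00}$ \emph{is} the $sgn$ step; the paper just applies it at the outset, so the divisor analysis on $X$ and the passage through $\mathcal A^\times$ become unnecessary. Your route has the virtue of being more self-contained and would adapt to situations lacking a convenient section like $sgn$, at the cost of the affinoid input (principality, trivial divisor $\Rightarrow$ unit, $\Div(\tau h)=\sigma_*\Div h$) that you rightly identify as needing care.
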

\begin{proof} Observe that, for all $n\geq 0,$ we have :
$$\tau^{dn} (\mu(t))= P(t)^n \mu(t).$$
This easily implies that $\mu(t)$ is transcendental over $\overline{\mathbb F_q}(t).$
Set $ \mathcal A^{0}= \mathcal B^{0}\cap \mathcal A,$  then we have :
$$sgn(\mathcal A^{0}) = \overline{\mathbb F_q}[t,\frac{1}{P(t)}].$$
If $\mu(t)$ was algebraic over ${\rm Frac}(\mathcal A),$ there would exist $f(X) \in \mathcal A^{0}[X]$ with at least one coefficient of $P$-adic valuation zero such that $f(\mu(t))=0.$
Since $sgn (\mu(t))=\mu(t)$ we would get that $\mu(t)$ is algebraic over $\overline{\mathbb F_q}(t)$ which is a contradiction.
\end{proof}
\begin{lemma}\label{lemma7}${}$\par
\noindent Let $\omega(t) =\sum_{j\geq 0}\lambda_{(\theta+\varepsilon_P)^{j+1}}(t+\varepsilon_P)^j \in \mathcal B^0.$ Then we have the following equality in $\mathcal B^0 :$
$$\omega_P(t) \mu(t) = \omega(t).$$
\end{lemma}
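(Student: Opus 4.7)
The plan is to show that both $\omega(t)$ and $\omega_P(t)\mu(t)$ satisfy the same $\tau$-difference equation $\tau(f)=(t-\theta)f$ in $\mathcal B^0$ and agree under $sgn$, then invoke a rigidity argument to force their difference, which lies in $\mathcal B^{00}$, to vanish.

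First I would confirm that $\omega(t)\in\mathcal B^0$: since $\theta+\varepsilon_P$ is coprime to $P$ in $A$ (if $P=\theta$ this uses $\varepsilon_P=1$, otherwise $\varepsilon_P=0$ and $\theta$ is already coprime to $P$), a Newton-polygon computation on the Carlitz polynomial $C_{(\theta+\varepsilon_P)^{j+1}}$ shows that $\lambda_{(\theta+\varepsilon_P)^{j+1}}\in O_{\mathbb C_P}^\times$ for every $j\ge 0$. Next I would verify $\tau(\omega(t))=(t-\theta)\omega(t)$: writing $t-\theta=(t+\varepsilon_P)-(\theta+\varepsilon_P)$ and comparing coefficients of $(t+\varepsilon_P)^j$ reduces the identity to
\[
C_{\theta+\varepsilon_P}(\lambda_{(\theta+\varepsilon_P)^{j+1}}) = \lambda_{(\theta+\varepsilon_P)^j}
\]
(with the convention $\lambda_1=0$), which is immediate from $\lambda_a=\exp_C(\widetilde{\pi}/a)$ combined with the $A$-module structure of $\exp_C$. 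The text already records the corresponding equation $\tau(\omega_P(t)\mu(t))=(t-\theta)\omega_P(t)\mu(t)$, so both sides satisfy the same $\tau$-difference equation.

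Second, I would apply $sgn$ to both sides. By the very definition of $\mu$, one has $sgn(\omega(t))=\mu(t)$. For the right-hand side, the key point is that $sgn(\omega_P(t))=1$: each factor $(1-(\theta-\zeta_P)^{q^j}/(t-\zeta_P^{q^j}))^{-1}$ in the product defining $\omega_P(t)$ has $sgn$ equal to $1$, because the numerator has $v_P=q^j\ge 1$ (so $sgn((\theta-\zeta_P)^{q^j})=0$), while the denominator $t-\zeta_P^{q^j}$, expanded around $t=-\varepsilon_P$, has unit constant term $-\varepsilon_P-\zeta_P^{q^j}\in\overline{\mathbb F_q}^\times$ and hence is a unit in $\overline{\mathbb F_q}[[t+\varepsilon_P]]$. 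Thus $sgn(\omega_P(t)\mu(t))=\mu(t)=sgn(\omega(t))$, so $F(t):=\omega(t)-\omega_P(t)\mu(t)\in\mathcal B^{00}$.

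Third, I would show that any $F\in\mathcal B^{00}$ satisfying $\tau(F)=(t-\theta)F$ is zero. Writing $F=\sum_{n\ge 0}c_n(t+\varepsilon_P)^n$ with $v_P(c_n)>0$, the functional equation yields the recursion $c_0^q+(\theta+\varepsilon_P)c_0=0$ and $c_n^q+(\theta+\varepsilon_P)c_n=c_{n-1}$ for $n\ge 1$. By induction on $n$, once $c_{n-1}=0$ we get $c_n(c_n^{q-1}+(\theta+\varepsilon_P))=0$; a nonzero $c_n$ would force $(q-1)v_P(c_n)=v_P(\theta+\varepsilon_P)=0$ (using that $\theta+\varepsilon_P$ is a $P$-adic unit in both cases), contradicting $v_P(c_n)>0$. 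Hence $F=0$ and the lemma follows.

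The main conceptual obstacle is that the $\tau$-difference equation alone does not admit a unique solution in $\mathcal B^0$ -- the constant term is determined only up to a $(q-1)$-th root of $-(\theta+\varepsilon_P)$ -- so matching functional equations is not enough. The $sgn$-computation is precisely what pushes the difference into $\mathcal B^{00}$, where the equation does become rigid.
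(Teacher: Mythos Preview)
Your proof is correct. It differs from the paper's in the final rigidity step: the paper argues multiplicatively, forming the ratio $\frac{\omega_P(t)\mu(t)}{\omega(t)}$, observing that it is $\tau$-fixed and hence lies in $\mathbb F_q[[t+\varepsilon_P]]$, and then noting that an element of $\mathbb F_q[[t+\varepsilon_P]]$ congruent to $1$ modulo $\mathcal B^{00}$ must equal $1$. To make the ratio well-defined the paper first checks (via Lemma~\ref{lemma1}) that $\omega(t)/\mu(t)\in(\mathcal B^0)^\times$, which amounts to the constant term of $\mu(t)$ being a nonzero element of $\overline{\mathbb F_q}$. Your additive route---taking the difference and using the coefficient recursion $c_n^q+(\theta+\varepsilon_P)c_n=c_{n-1}$ together with $v_P(\theta+\varepsilon_P)=0$ to kill each $c_n$---bypasses the invertibility check entirely and gives a slightly more self-contained argument. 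Both approaches rest on the same two inputs (the common $\tau$-difference equation and the $sgn$-match), so the distinction is one of packaging rather than depth.
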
 
\begin{proof} First observe that :
$$\tau (\omega(t))= (t-\theta) \omega(t),$$
and :
$$\mu(t)=sgn(\omega(t)).$$
Thus by Lemma \ref{lemma1} :
$$\omega(t) =\mu(t) (1+Pf),$$
for some $f\in \mathcal B^0.$
Therefore :
$$\frac{\omega(t)}{\mu(t)} \in \mathcal (B^0)^\times.$$
We have :
$$\tau (\frac{\omega_P(t)\mu(t)}{ \omega(t)})= \frac{\omega_P(t)\mu(t)}{ \omega(t)}.$$
Thus :
$$\frac{\omega_P(t)\mu(t)}{ \omega(t)}\in \mathbb F_q[[t+\varepsilon_P]].$$
But :
$$v_P(\frac{\omega_P(t)\mu(t)}{ \omega(t)}-1)>0.$$
The Lemma follows.
\end{proof}
\noindent Observe that by Lemma \ref{Prellemma1}, $\omega(t)$ is the usual Anderson-Thakur function viewed as an element in $\mathcal B.$ 

\noindent Let $C^{\dag}: A\rightarrow \mathcal B^{0} \{ \tau\}$ be the morphism of $\mathbb F_q$-algebras given by :
$$C^{\dag} _{\theta} =(t-\zeta_P) \tau +\theta.$$
Then we have the following equality in $\mathcal B^{0}\{ \tau \}:$
$$ C_{\theta}\mu(t)= \mu(t) C^{\dag }_{\theta}.$$
We set :
$$log_{C^{\dag}}= \sum_{i\geq 0} \frac{sgn(b_i(t))}{\ell_i }\tau ^i \in \mathcal B\{ \{ \tau \}\}.$$
Then, we have the following equality in $\mathcal B\{ \{ \tau \}\} :$
$$  log_{C^{\dag}} C^{\dag}_{\theta} = \theta log_{C^{\dag}}                           ,$$
and :
\begin{equation}\label{logCdag}   \mu(t) log_{C^{\dag}}=  log_C \mu(t)                                .\end{equation}
Now, we extend $log_{C,P}: O_{\mathbb C_P}\rightarrow \mathbb C_P$ into a morphism of $\mathbb F_q[[t+\varepsilon_P]]$-algebras $log_{C, P}: \mathcal B^{0} \rightarrow \mathbb C_P[[t+\varepsilon_P]],$ by setting :
$$log_{C,P}(\sum_{i\geq 0} \alpha_i (t+\varepsilon_P)^i )=\sum_{i\geq 0} log_{C,P}( \alpha_i) (t+\varepsilon_P)^i,\,  \alpha_i \in O_{\mathbb C_P}.$$
Observe that by Lemma \ref{lemma7}, we have :
$$log_{C,P}(\omega_P(t) \mu(t))=0.$$

Let $\mathcal M_{\mathcal A}=\{ x\in \mathcal A, v_P(x)>0\}.$ Observe that :
$$\mathcal A^0=\{ x\in \mathcal A, v_P(x)\geq 0\}= \overline{\mathbb F_q}[t, \frac{1}{P(t)}]\oplus \mathcal M_{\mathcal A}.$$
We notice that $log_{C^{\dag}}$ converges on $\mathcal M_{\mathcal A}.$ We have :
$$C^{\dag}_P\equiv P(t) \tau^d \pmod{PA_P[t]\{ \tau\}}.$$
Thus, if $\alpha \in \overline{\mathbb F_q},$ there exists an integer $n\geq 1$ such that :
$$v_P(C^{\dag}_{P^n}(\alpha) -P(t)^n \alpha)\geq 1.$$
Therefore, as in section 3, we set :
$$log_{C^{\dag}, P}(\alpha )= \frac{1}{P^n-P(t)^n}log_{C^{\dag}}(  C^{\dag}_{P^n}(\alpha) -P(t)^n \alpha          )\in \mathcal A .$$
We obtain a map: $log_{C^{\dag}, P}: \mathcal A^0\rightarrow \mathcal A.$ 
\begin{lemma}\label{lemma8}
We have :
$$log_{C^{\dag},P}(\omega_P(t)) =0.$$
In fact, we have :
$$\forall x\in \mathcal A^0, \mu(t) log_{C^{\dag}, P}(x)=log_{C,P}(x\mu(t)).$$
\end{lemma}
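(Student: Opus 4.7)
The plan is first to prove the general identity $\mu(t) log_{C^{\dag},P}(x) = log_{C,P}(x\mu(t))$ for all $x \in \mathcal A^0$, and then to deduce $log_{C^{\dag},P}(\omega_P(t)) = 0$ as an immediate corollary via Lemma \ref{lemma7} and the kernel description of $log_{C,P}$ (Lemma \ref{lemma4}).

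For the general identity, fix $x \in \mathcal A^0$ and choose $n \geq 1$ large enough so that $y := C^{\dag}_{P^n}(x) - P(t)^n x$ satisfies $v_P(y) > \frac{1}{q^d-1}$; this ensures $log_{C^{\dag}}$ converges on $y$ and that, by definition, $log_{C^{\dag},P}(x) = (P^n - P(t)^n)^{-1} log_{C^{\dag}}(y)$. Multiplying both sides by $\mu(t)$ and using the intertwining identity \eqref{logCdag}, valid on this convergence region, yields
$$\mu(t) log_{C^{\dag},P}(x) = \frac{1}{P^n - P(t)^n} log_C(\mu(t) y).$$
The relation $\mu(t) C^{\dag}_\theta = C_\theta \mu(t)$ in $\mathcal B\{\tau\}$, which follows from $\mu(t)^{q-1} = t - \zeta_P$, iterates to $\mu(t) C^{\dag}_{P^n} = C_{P^n} \mu(t)$, so that $\mu(t) y = C_{P^n}(\mu(t) x) - P(t)^n \mu(t) x$.

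To recognise this expression as $(P^n - P(t)^n) log_{C,P}(\mu(t) x)$, I use the coefficient-wise extension $log_{C,P}: \mathcal B^0 \to \mathbb C_P[[t+\varepsilon_P]]$ defined just before the lemma. This extension is $\mathbb F_q[t]$-linear (since $t$ acts on coefficient indices only) and $A$-linear in the sense $log_{C,P} \circ C_a = a \cdot log_{C,P}$, inherited from the pointwise Carlitz identity. Moreover $\mu(t) y$ has valuation $> \frac{1}{q^d-1}$ since $\mu(t)$ is a unit in $\mathcal B$ (its constant term $sgn(\lambda_{\theta+\varepsilon_P}) \in \overline{\mathbb F_q}^\times$ is nonzero), so $log_C$ and $log_{C,P}$ agree on it. Therefore
$$log_C(\mu(t) y) = log_{C,P}(C_{P^n}(\mu(t) x) - P(t)^n \mu(t) x) = (P^n - P(t)^n) \, log_{C,P}(\mu(t) x),$$
and dividing by $P^n - P(t)^n$ gives the asserted identity.

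Finally, taking $x = \omega_P(t)$, Lemma \ref{lemma7} gives $\mu(t) \omega_P(t) = \omega(t) = \sum_{j \geq 0} \lambda_{(\theta+\varepsilon_P)^{j+1}} (t+\varepsilon_P)^j$, whose coefficients all lie in the Carlitz torsion $\Lambda$, hence in $\ker log_{C,P}$ by Lemma \ref{lemma4}. Therefore $log_{C,P}(\omega(t)) = 0$, and the general identity forces $\mu(t) \, log_{C^{\dag},P}(\omega_P(t)) = 0$. Since $\mu(t)$ is a unit in $\mathcal B$, we conclude $log_{C^{\dag},P}(\omega_P(t)) = 0$. The main technical point will be tracking convergence carefully so that \eqref{logCdag} is legitimately applied to $y$ and $\mu(t) y$ simultaneously, and confirming that the coefficient-wise extension of $log_{C,P}$ is compatible with the formal $log_C$ on the relevant domain; both checks are routine adaptations of the constructions in Section 3.
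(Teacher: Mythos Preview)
Your proof is correct but takes a different route from the paper. The paper proves the first assertion directly and independently of the second: it observes that $C^{\dag}_\theta(\omega_P(t)) = t\,\omega_P(t)$ (an immediate consequence of $\tau(\omega_P(t)) = \frac{t-\theta}{t-\zeta_P}\omega_P(t)$), so by $A$-linearity and $\mathbb F_q[t,\frac{1}{P(t)}]$-linearity of $log_{C^{\dag},P}$ one has
\[
(\theta - t)\,log_{C^{\dag},P}(\omega_P(t)) = log_{C^{\dag},P}\bigl(C^{\dag}_\theta(\omega_P(t)) - t\,\omega_P(t)\bigr) = log_{C^{\dag},P}(0) = 0,
\]
and divides by $\theta - t$. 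The general identity is then left to the reader with the remark that it ``can be proved the same way.'' Your approach reverses the logic: you establish the intertwining identity first via \eqref{logCdag} and the compatibility of $log_C$ with the coefficient-wise extension of $log_{C,P}$, and then deduce the vanishing from Lemma~\ref{lemma7} together with the fact (noted just before the lemma) that $log_{C,P}(\omega(t)) = 0$ because the coefficients of $\omega(t)$ are Carlitz torsion. The paper's argument is shorter and avoids $\mu(t)$ entirely for the first claim; your argument has the merit of actually supplying the details of the general identity that the paper omits, and of making the link to the Carlitz-side kernel explicit. One small remark: you only need $v_P(y) > 0$ (not $> \tfrac{1}{q^d-1}$) for the convergence of $log_{C^{\dag}}$, $log_C$, and for $log_{C,P}$ to coincide with $log_C$ coefficient-wise on $\mu(t)y$.
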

\begin{proof} Observe that :
$$C^{\dag}_{\theta} (\omega_P(t))= t\omega_P(t).$$
By the definition of $log_{C^{\dag}, P},$ we have :
$$log_{C^{\dag},P}(\omega_P(t))= \frac{1}{\theta-t}log_{C^{\dag}, P}( C^{\dag}_{\theta}(\omega_P(t))-t\omega_P(t))=0.$$
The second assertion can be proved  the same way and is left to the reader.
\end{proof}

\noindent Now, we set :
$$\mathcal L_P(t)= \prod_{j=0}^{d-1} \tau ^j (\mathcal L (\mathbb F_{q^d}; \frac{1}{t-\zeta_P}; \theta-\zeta_P))\in \mathbb F_{q^d}[t, \frac{1}{P(t)}][[P]].$$
Observe that $\mathcal L_p(t)$ is convergent for $t\in \mathbb C_P\setminus \{ \zeta_P, \ldots , \zeta_p^{q^d-1}\}.$

\begin{theorem}\label{theorem4}${}$\par
 \noindent Set $\pi_P=\prod_{i=0}^{d-1}\prod_{j\geq 1}(1- (\theta-\zeta_P)^{q^{dj+i}-q^i})^{-1}.$ We have the following equality in $\mathbb F_{q^d}[t, \frac{1}{P(t)}]((P)):$
 $$\frac{b_d(t)\mathcal L_P(t)}{\pi_P}(  \frac{1}{P}log_{C^{\dag},P}  ( \sum_{k=1}^{d} k(P,k)b_k(t) \omega_P(t)) )                                      =  P(t)L_P(t),$$
 where $C_P=\sum_{k=0}^d (P,k) \tau^k, $ $(P,k)\in A, k=0, \ldots, d.$
\end{theorem}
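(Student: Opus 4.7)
The plan is to reduce the theorem to the product of two simpler identities, each proved separately:
\begin{equation}\label{Pnorm}
\mathcal L_P(t)\,\omega_P(t) = \pi_P\,\frac{P(t)}{b_d(t)},
\end{equation}
\begin{equation}\label{Ppel}
P\,L_P(t)\,\omega_P(t) = \log_{C^{\dag},P}\!\left(\sum_{k=1}^d k\,(P,k)\,b_k(t)\,\omega_P(t)\right).
\end{equation}
Granting these, substituting \eqref{Ppel} into the left-hand side of the theorem gives $\tfrac{b_d(t)\mathcal L_P(t)}{\pi_P}L_P(t)\omega_P(t)$, which by \eqref{Pnorm} collapses to $P(t)L_P(t)$.

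For \eqref{Pnorm}, I would apply Proposition \ref{Preltheorem1}(1) over $\mathbb F_{q^d}$ with the substitutions $y\mapsto\tfrac{1}{t-\zeta_P}$ and $Z\mapsto\theta-\zeta_P$, obtaining
\[
\mathcal L(\mathbb F_{q^d};\tfrac{1}{t-\zeta_P};\theta-\zeta_P)\,\omega(\mathbb F_{q^d};\tfrac{1}{t-\zeta_P};\theta-\zeta_P) = \pi(\mathbb F_{q^d};\theta-\zeta_P)\cdot\frac{t-\zeta_P}{t-\theta}.
\]
Applying the Frobenius twists $\tau^j$ for $j=0,\ldots,d-1$ and multiplying, the $\pi$-factors assemble into $\pi_P$ by direct comparison with its definition, the rational functions telescope to $\prod_{j=0}^{d-1}\tfrac{t-\zeta_P^{q^j}}{t-\theta^{q^j}}=\tfrac{P(t)}{b_d(t)}$, and the $\omega$- and $\mathcal L$-factors collapse to $\omega_P(t)$ and $\mathcal L_P(t)$ respectively, yielding \eqref{Pnorm}.

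The main obstacle is \eqref{Ppel}, the $P$-adic analogue of Pellarin's identity $L(t)\omega(t)=\tfrac{\widetilde\pi}{\theta-t}$ with the trivial zero at $z=1$ accounted for. My strategy is to prove it by specialization at an infinite bounded set of algebraic points and to invoke the rigidity of entire functions. Specifically, for each Dirichlet character $\chi$ of type $1$ and conductor $f\neq P$, I would specialize both sides of \eqref{Ppel} at $t=\chi(\theta)$: Theorem \ref{theorem2}(3) expresses $L_P(1,\chi)g(\chi)$ as an Anderson $P$-adic logarithm $\log_{C,P}(\mu(\lambda_f^q))$, while Theorem \ref{theorem3}(1) identifies $\omega_P(\chi(\theta))$ with $g(\chi)$ up to an explicit unit in $\overline{\mathbb F_q}$. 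On the right, I convert $\log_{C^{\dag},P}$ into $\log_{C,P}$ using the intertwining $\mu(t)\log_{C^{\dag},P}(x)=\log_{C,P}(x\mu(t))$ from Lemma \ref{lemma8} together with $\omega_P(t)\mu(t)=\omega(t)$ from Lemma \ref{lemma7}. Matching the two sides at each $\chi(\theta)$, which ranges over an infinite bounded subset of $\overline{\mathbb F_q}\subset O_{\mathbb C_P}$, and observing that both sides of \eqref{Ppel} lie in $\mathcal A$ and extend to entire objects, Lemma \ref{ATlemma1} then forces equality globally.

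The technical crux will be twofold: first, verifying that $\sum_{k=1}^d k\,(P,k)\,b_k(t)\,\omega_P(t)$ belongs to the domain $\mathcal A^0$ where $\log_{C^{\dag},P}$ is defined; and second, aligning the appearance of the coefficient sum $\sum_{k=1}^d k(P,k)b_k(t)$ with the $z$-derivative at $z=1$ of $\sum_{k=0}^d (P,k)b_k(t)z^k$. Here the auxiliary identity $\sum_{k=0}^d(P,k)b_k(t)=P(t)$, which is a consequence of $C_P(\omega(t))=P(t)\omega(t)$ combined with the eigenvector relation $\tau^k(\omega(t))=b_k(t)\omega(t)$, underlies the very form of the inner argument in \eqref{Ppel}.
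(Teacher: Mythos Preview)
Your two-step decomposition --- first the norm identity $\mathcal L_P(t)\,\omega_P(t)=\pi_P\,P(t)/b_d(t)$, then the $P$-adic Pellarin-type formula $P\,L_P(t)\,\omega_P(t)=\log_{C^\dag,P}\bigl(\sum_{k=1}^d k(P,k)b_k(t)\omega_P(t)\bigr)$ --- is exactly the structure of the paper's proof, and your derivation of the first identity from Proposition~\ref{Preltheorem1} matches the paper.

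The gap is in your proposed proof of the second identity by specialization. The intertwining tools you cite, $\mu(t)\log_{C^\dag,P}(x)=\log_{C,P}(x\mu(t))$ from Lemma~\ref{lemma8} and $\omega_P(t)\mu(t)=\omega(t)$ from Lemma~\ref{lemma7}, live in $\mathcal B^0=O_{\mathbb C_P}[[t+\varepsilon_P]]$. The coefficients of $\mu(t)$ and $\omega(t)$ in this expansion are $P$-adic units, so these formal series do \emph{not} converge $P$-adically at $t=\chi(\theta)\in\overline{\mathbb F_q}$; the intertwining therefore cannot be specialized, and you are left having to match $\log_{C^{\dag},P}$ of an explicit element against the $\Delta_f$-averaged expression of Theorem~\ref{theorem2}(3) directly at each point --- a nontrivial identity you have not established. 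A secondary issue is that neither side of your second identity is entire ($\omega_P(t)$ has poles at the zeros of $P(t)$), so Lemma~\ref{ATlemma1} is not the relevant rigidity statement; one would need a separate argument in $\mathcal A$.

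The paper instead proves the second identity by a direct $z$-derivative computation carried out in $\mathcal B[[z]]$, with no specialization in $t$. One writes $Log_{P,z}=\log_{C,z}-\tfrac{z^d}{P}\log_{C,z}C_P$ via Anderson's log-algebraicity, applies it to $\omega(t)=\omega_P(t)\mu(t)$, differentiates in $z$, and simplifies using $\log_{C,z}C_{z,P}=P\log_{C,z}$ (where $C_{z,P}=\sum_k z^k(P,k)\tau^k$) to obtain $\tfrac{d}{dz}Log_{P,z}\big|_{z=1}=\tfrac{1}{P}\log_C\bigl(\sum_{k=1}^d k(P,k)\tau^k\bigr)-d\log_C$. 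Evaluating at $\omega(t)$ and using the intertwining of Lemma~\ref{lemma8} (now legitimately, inside $\mathcal B$) together with $\log_{C^\dag,P}(\omega_P(t))=0$ yields the second identity in $\mathcal A$.
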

\begin{proof}
\noindent Set :
$$Log_{P,z}=\sum_{a\in A_{+}\setminus PA_{+}} \frac{z^{deg a}}{a} C_a \in A_P\{ \tau \}[[z]],$$
$$log_{C,z}= \sum_{k\geq 0} \frac{z^k}{\ell_k} \tau^k \in K \{ \tau \}[[z]].$$
Here $\tau z= z \tau .$ By Anderson' s log algebraicity Theorem (see \cite{AND}):
$$log_{C,z}=\sum_{a\in A_{+}} \frac{z^{deg a}}{a} C_a.$$
Therefore :
$$Log_{P,z}= log_{C,z}-\frac{z^d}{P}  log_{C,z} C_P.$$
We get in $\mathcal B[[z]]:$
\begin{equation}\label{*} Log_{P,z}({ \omega_P(t) }{\mu(t)})=(\sum_{a\in A_{+}\setminus PA_{+}}\frac{a(t)z^{deg a }}{a}) ({\omega_P(t) }{\mu(t)}).\end{equation}
We have :
$$ \frac{d}{dz}Log_{P,z}= \frac{d}{dz} log_{C,z}-d\frac{z^{d-1}}{P}log_{C,z}C_P - \frac{z^d}{P}(\frac{d}{dz} log_{C,z}) C_P.$$
Taking $z=1$ we get :
$$  \frac{d}{dz}Log_{P,z}\mid_{z=1}= \frac{d}{dz} log_{C,z}\mid_{z=1}-dlog_C -\frac{1}{P}\frac{d}{dz} log_{C,z}\mid_{z=1} C_P \in K\{ \{ \tau \}\}.$$
But, using the fact that :
$$log_{C,z} C_{z,P} = Plog_{C,z},$$
where if $C_P=\sum_{k=0}^d (P,k) \tau ^k,$ we have :
$$C_{z,P}= \sum_{k=0}^d z^k (P,k)\tau^k .$$
We get :
$$ \frac{d}{dz}Log_{P,z}\mid_{z=1}= \frac{1}{P} log_C ((\sum_{k=1}^{d} k(P,k)\tau ^k) - d log_C  \in K\{ \{ \tau \}\}.$$
Therefore, evaluating at $\mu(t)\omega_P(t)$ and using \eqref{*} and \eqref{logCdag}, we get :
$$          \frac{1}{P}log_{C^{\dag}, P}  ( \sum_{k=1}^{d} k(P,k)b_k(t) \omega_P(t))-dlog_{C^{\dag},P}(\omega_P(t))=                                               L_P(t)\omega_P(t) .$$
But  by Proposition \ref{Preltheorem1} :
$$\frac{\mathcal L_P(t) \omega_P(t)}{\pi_P}= \frac{P(t)}{b_d(t)}.$$
We get the result by Lemma \ref{lemma8}.
\end{proof}


\subsection{The case $s\geq 2$}
Let $t_1,\ldots, t_s,z$ be $s+1$ indeterminates over $\mathbb C_P$ and let $\mathcal B_s$ be the set of elements in $\mathbb C_P[[t_1+\varepsilon_P, \ldots, t_s+\varepsilon_P]]$ with bounded coefficients equipped with the usual Gauss $P$-adic valuation. Let $\tau : \mathcal B_s\rightarrow \mathcal B_s$ be the continuous morphism of $\mathbb F_q[[t_1+\varepsilon_P, \ldots, t_s+\varepsilon_P]]$-algebras given by :
$$\forall x\in \mathbb C_p, \tau (x)=x^q.$$
Let $sgn :\mathcal B_s^0:=\{ x\in \mathcal B_s, v_p(x)\geq 0\} \rightarrow \overline{\mathbb F_q}[[t_1+\varepsilon_P, \ldots, t_s+\varepsilon_P]]$ be the continuous morphism of  $\overline{\mathbb F_q}[[t_1+\varepsilon_P, \ldots, t_s+\varepsilon_P]]$-algebras  that extends $sgn :O_{\mathbb C_P}\rightarrow \overline{\mathbb F_q}.$ Let also $\mathcal B_s^{00}={\rm ker} sgn.$
 
Let $\mathcal A_s\subset \mathcal B_s$ be the $P$-adic completion of $\mathbb C_P[t_1, \ldots, t_s, \frac{1}{P(t_1)\cdots P(t_s)}].$ We also set $\mathcal A_s^0=\{ x\in \mathcal A_s, v_P(x)\geq 0\}$ and $\mathcal A_s^{00}=\{ x\in \mathcal A_s, v_P(x)>0\}.$
Let $\phi: A\rightarrow \mathcal A_s\{ \tau\}$ be the morphism of $\mathbb F_q$-algebras given by :
$$\phi_{\theta}= (t_1-\zeta_P)\cdots (t_s-\zeta_P)\tau +\theta.$$
If $s=1$ then $\phi =C^{\dag}.$ Let :
$$log_{\phi}=\sum_{i\geq 0}\frac{sgn(b_i(t_1)\cdots b_i(t_s))}{\ell_i} \tau ^i\in \mathcal A_s\{ \{ \tau \}\}.$$
We have in $\mathcal B\{ \{ \tau \}\} :$
$$\mu(t_1) \cdots \mu(t_s) \phi_{\theta}= C_{\theta} \mu(t_1) \cdots \mu(t_s),$$
$$\mu(t_1) \cdots \mu(t_s)log_{\phi} = log_C\mu(t_1) \cdots \mu(t_s),$$
$$log_{\phi} \phi_{\theta}= \theta log_{\phi}.$$
As in the previous section let $log_{C,P}: \mathcal B_s^0\rightarrow \mathbb C_P[[t_1+\varepsilon_P, \ldots, t_s+\varepsilon_P]]$ be the continuous morphism of $\mathbb F_q[[t_1+\varepsilon_P, \ldots, t_s+\varepsilon_P]]$-algebras induced by $log_{C,P}: O_{\mathbb C_P}\rightarrow \mathbb C_P.$
Now, observe that $log_{\phi}$ converges on $\mathcal A_s^{00}$ and :
$$\phi_P\equiv P(t_1)\cdots P(t_s)\tau ^d \pmod {P\mathcal A^0\{ \tau\}}.$$
Thus, again as in the previous section, we can extend $log_{\phi}$ to a continuous morphism of $\mathbb F_q[t_1, \ldots, t_s, \frac{1}{P(t_1)\cdots P(t_s)}]$-algebras $log_{\phi, P}: \mathcal A_s^0 \rightarrow \mathcal A.$ We have :
$$\forall x\in \mathcal A^0, \mu(t_1)\cdots \mu(t_s) log_{\phi, P}(x) =log_{C,P}(x\mu(t_1)\cdots \mu(t_s)).$$
We will follow the proof of Theorem \ref{theorem2}. We work in $\mathcal B_s[[z]]$ where $\tau z =z\tau.$ We have in $\mathcal B_s[[z]]:$
$$\sum_{k\geq 0}\sum_{a\in A_{+,k}}\frac{a(t_1)\cdots a(t_s)}{a} z^k = log_{s,z} u(t_1, \ldots, t_s;z),$$
where :
$$log_{s,z}=\sum_{i\geq 0}\frac{b_i(t_1)\cdots b_i(t_s)z^i}{\ell_i}\tau ^i.$$
Thus:
$$\sum_{k\geq 0}\sum_{\substack{a\in A_{+,k} \\ a\not \equiv 0\pmod{P}}}\frac{a(t_1)\cdots a(t_s)}{a} z^k=(1-\frac{P(t_1)\cdots P(t_s)z^d}{P})  log_{s,z} u(t_1, \ldots, t_s;z).$$
We multiply the above equality by $\omega(t_1)\cdots \omega(t_s) \in \mathcal B^0,$ we get :
\begin{align*}
&\omega(t_1)\cdots \omega(t_s)\sum_{k\geq 0}\sum_{a\in A_{+,k}, a\not \equiv 0\pmod{P}}\frac{a(t_1)\cdots a(t_s)}{a} z^k\\
&=  (1-\frac{P(t_1)\cdots P(t_s)z^d}{P})  log_{C,z}\omega(t_1)\cdots \omega(t_s)  u(t_1, \ldots, t_s;z), 
\end{align*}
where we recall that $log_{C,z}= \sum_{i\geq 0} \frac{z^i}{\ell_i}\tau ^i.$ Observe that :
$$\omega(t_1)\cdots \omega(t_s)  u(t_1, \ldots, t_s;z)\in \mu(t_1)\cdots \mu(t_s) \mathcal A_s^0.$$
Now, we divide by $\mu(t_1)\cdots \mu(t_s),$ we have (in    $\mathcal A[[z]] $): 
\begin{align*}
&\omega_P(t_1)\cdots \omega_P(t_s)\sum_{k\geq 0}\sum_{a\in A_{+,k}, a\not \equiv 0\pmod{P}}\frac{a(t_1)\cdots a(t_s)}{a} z^k \\ &= (1-\frac{P(t_1)\cdots P(t_s)z^d}{P})  log_{\phi,z}\omega_P(t_1)\cdots \omega_P(t_s)  u(t_1, \ldots, t_s;z),\end{align*}
where :
$$log_{\phi,z}=\sum_{i\geq 0}\frac{sgn(b_i(t_1)\cdots b_i(t_s))z^i}{\ell_i}\tau ^i.$$
Now, as in the proof of Theorem \ref{theorem2}, 
if  $s\not \equiv 1\pmod{q-1},$ we have :
\begin{align*}
&\omega_P(t_1)\cdots \omega_P(t_s)L_P(t_1, \ldots, t_s)\\
&= (1-\frac{P(t_1)\cdots P(t_s)}{P})log_{\phi,P}(\omega_P(t_1)\cdots \omega_P(t_s)u(t_1, \ldots, t_s;1)),
\end{align*}
and if  $s \equiv 1\pmod{q-1},$ we get :
\begin{align*}
&\omega_P(t_1)\cdots \omega_P(t_s)L_P(t_1, \ldots, t_s)\\&= (1-\frac{P(t_1)\cdots P(t_s)}{P})log_{\phi,P}(\omega_P(t_1)\cdots \omega_P(t_s)\frac{d}{dz}u(t_1, \ldots, t_s;z)\mid_{z=1}).
\end{align*}
Therefore, for $s\not \equiv 1\pmod{q-1},$ if we set :
\begin{eqnarray*}
U(t_1, \ldots, t_s)&=&\omega_P(t_1)\cdots \omega_P(t_s)u(t_1, \ldots, t_s;1)\\
&\in& \mathbb F_{q^d}[t_1,\ldots, t_s, \frac{1}{P(t_1)\cdots P(t_s)}][[P]],
\end{eqnarray*}
and for $s\equiv 1\pod{q-1}:$
\begin{eqnarray*}
U(t_1, \ldots, t_s)&=&\omega_P(t_1)\cdots \omega_P(t_s)\frac{d}{dz}u(t_1, \ldots, t_s;z)\mid_{z=1}\\
&\in& \mathbb F_{q^d}[t_1,\ldots, t_s, \frac{1}{P(t_1)\cdots P(t_s)}][[P]],
\end{eqnarray*}
We get for $s\geq 2:$
\begin{align*}
&P(t_1)\cdots P(t_s)L_P(t_1, \ldots, t_s)\\ &= b_d(t_1)\cdots b_d(t_s)\frac{\mathcal L_P(t_1)\cdots \mathcal L_P(t_s)}{\pi_P^s}(1-\frac{P(t_1)\cdots P(t_s)}{P})log_{\phi,P}(U(t_1, \ldots,t_s)).
\end{align*}
An interesting case is when $s\geq 2,$ $s\equiv 1\pmod{q^d-1}.$ Let's set :
\begin{eqnarray*}
B_P(t_1, \ldots, t_s)&=&\prod_{j=0}^{d-1} \tau ^j (B(\mathbb F_{q^d}; \frac{1}{t_1-\zeta_P}, \ldots, \frac{1}{t_s-\zeta_P}; \theta-\zeta_P))\\
&\in& \mathbb F_{q^d}[t_1, \ldots, t_s, \frac{1}{P(t_1)\cdots P(t_s)}][\theta].
\end{eqnarray*}
We get in this case :
\begin{align*}
& B_P(t_1,\ldots, t_s)L_P(t_1, \ldots, t_s)\\
&= \frac{\mathcal L_P(t_1, \ldots, t_s)}{\pi_P}(1-\frac{P(t_1)\cdots P(t_s)}{P})log_{\phi,P}(U(t_1, \ldots,t_s)),
\end{align*}
where :
$$\mathcal L_P(t_1, \ldots, t_s)=\prod_{j=0}^{d-1}\tau ^j (\mathcal L(\mathbb F_{q^d}; \frac{1}{t_1-\zeta_P}, \ldots, \frac{1}{t_s-\zeta_P}; \theta-\zeta_P)).$$


\section{The case of the place $\infty$}
We are now applying the results in section 6 to get some new functional identities for $L$-series introduced in \cite{PEL}.
\subsection{The case $s=1$}${}$\par
Let $\mathcal A_{\infty}$ be the completion, for the Gauss valuation $v_{\infty},$ of $\mathbb C_{\infty}[t, \frac{1}{t}].$ Let $\mathcal B_{\infty}\subset \mathbb C_{\infty}[[t+1]]$ be the set of power series with bouded coefficients. Then :
$$\mathcal A_{\infty}\subset \mathcal B_{\infty}.$$
Set $\mathcal B_{\infty}^0=\{ x\in \mathcal B_{\infty}, v_{\infty} (x)\geq 0\}=O_{\mathbb C_{\infty}}[[t+1]].$ 
Let $sgn=sgn_{\infty}: \mathcal B_{\infty}^0\rightarrow\overline{\mathbb F_q}[[t+1]]$ be the continuous morphism of $\overline{\mathbb F_q}[[t+1]]$-algebras such that:
$$\forall x\in \mathbb C_{\infty}, v_{\infty} (x-sgn(x))>0.$$
We set :
$$\mathcal B_{\infty}^{00}={\rm Ker} sgn .$$
Set $\mathcal A_{\infty}^0=\mathcal A_{\infty}\cap \mathcal B_{\infty}^0$ and $\mathcal A_{\infty}^{00}= \mathcal A_{\infty}\cap \mathcal B_{\infty}^{00}= \{ x\in \mathcal A_{\infty}, v_{\infty}(x)>0\}.$ Observe that :
$$\mathcal A_{\infty}^0=\overline{\mathbb F_q}[t, \frac{1}{t}]\oplus \mathcal A_{\infty}^{00}.$$
Recall that :
$$\omega(t)=\lambda_{\theta}\prod_{j\geq 0}(1-\frac{t}{\theta ^{q^j}})^{-1}\in \mathcal A_{\infty}^\times.$$
Let's set :
$$\omega_{\infty}(t) = \frac{\omega(\frac{1}{t})}{\lambda_{\theta}}\in( \mathcal A_{\infty}^0)^\times.$$
We first introduce a new kind of $L$-series.  We set :
$$\mathcal L_{\infty} (t)=\sum_{k\geq 1}\sum_{a\in A_{+,k}, a(0)\not =0}k\frac{a(t)}{a(\frac{1}{\theta})}.$$
then by Theorem \ref{Preltheorem2}, we have :
$$\mathcal L_{\infty}(t) \in \mathcal A_{\infty}^{0}.$$
And in fact $\mathcal L_{\infty}(t)$ defines an entire function on $\mathbb C_{\infty}.$ Also recall that (see \cite{PEL}) :
$$L(t)=\sum_{k\geq 0} \sum_{a\in A_{+,k}}\frac{a(t)}{a} \in (\mathcal A_{\infty}^0)^\times.$$
Let $\tau : \mathcal B_{\infty}\rightarrow \mathcal B_{\infty}$ be the continuous morphism of $\mathbb F_q[[t+1]]$-algebras given by :
$$\forall x\in \mathbb C_{\infty}, \tau (x)=x^q.$$
Now, let $C^{\dag} :A\rightarrow \mathcal A_{\infty}\{ \tau\}$ be the morphism of $\mathbb F_q$-algebras given by :
$$C^{\dag}_{\theta} = t\tau +\frac{1}{\theta}.$$
We set :
$$log_{C^{\dag}}=1+\sum_{j\geq 1}\frac {t^j }{\prod_{k=1}^{j}(\frac{1}{\theta}-\frac{1}{\theta^{q^k}})}\tau ^j \in \mathcal A_{\infty}\{ \{ \tau \}\}.$$
We have in $\mathcal A_{\infty}\{ \{ \tau \}\} :$
$$log_{C^{\dag}}C^{\dag}_{\theta}= \frac{1}{\theta} log_{C^{\dag}}.$$
Observe that $log_{C^{\dag}}$ converges on $\mathcal A_{\infty}^{00}.$ Furthermore, if $\alpha \in \mathbb F_{q^n}, n\geq 1,$ then :
$$v_{\infty} (C^{\dag}_{\theta^n}(\alpha) -t^n \alpha)\geq 1.$$
Therefore we set :
$$log_{C^{\dag}, \infty}(\alpha )=\frac{1}{\theta^{-n}-t^n}log_{C^{\dag}}(C^{\dag}_{\theta^n}(\alpha) -t^n \alpha)\in \mathcal A_{\infty}.$$
Thus, as in section 3, we obtain a continuous morpshism of $\mathbb F_q[t, \frac{1}{t}]$-modules :
$$log_{C^{\dag}, \infty}: \mathcal A_{\infty}^0\rightarrow \mathcal A_{\infty}.$$
Observe that :
$$log_{C^{\dag}, \infty}(\omega_{\infty}(t))=0.$$
The reader can recognize that we are in the situation of section 6 for the local field $\mathbb F_q((\frac{1}{\theta})).$ Therefore if we apply Theorem \ref{theorem4} (change $\theta$ in $\frac{1}{\theta}$ in the formula), we get :
\begin{corollary}\label{corollary1} We have the following equality in $\mathbb F_q[t, \frac{1}{t}]((\frac{1}{\theta})):$
$$   L(\frac{1}{t}) \, log_{C^{\dag}, \infty}((t-\frac{1}{\theta})\frac{\omega(\frac{1}{t})}{\lambda_{\theta}}) =\frac{t\widetilde{\pi} }{\theta \lambda_{\theta}(\theta  t-1) }\, \mathcal L_{\infty}(t)                                      .$$

\end{corollary}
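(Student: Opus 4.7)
The plan is to deduce the identity directly from Theorem \ref{theorem4} by reinterpreting $\infty$ as a degree-one prime of the subring $\mathbb F_q[1/\theta]\subset K$, with local parameter $u:=1/\theta$. I will first set up the dictionary: since $sgn_\infty(u)=0$, the analogue of $\zeta_P$ is $0$, which gives $P(X)=X$ (so $P(t)=t$), $d=1$, and $b_1(t)=t-1/\theta$; and the local Carlitz module $C_u=\tau+u$ produces $(P,0)=1/\theta$, $(P,1)=1$, so the inner expression $\sum_{k=1}^{d}k(P,k)b_k(t)$ collapses to the single term $(t-1/\theta)$. With this dictionary, the $C^\dag$ of Section 6 reads $C^\dag_\theta=(t-0)\tau+u=t\tau+1/\theta$, matching the operator of Section 7.1.

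The second step is to identify the transcendental objects. I will verify that $\omega_P(t)$ becomes $\prod_{j\geq 0}(1-u^{q^j}/t)^{-1}=\omega(1/t)/\lambda_\theta=\omega_\infty(t)$, and that the constant $\pi_P$ becomes $\prod_{j\geq 1}(1-u^{q^j-1})^{-1}=\prod_{j\geq 1}(1-\theta^{1-q^j})^{-1}$, which the factorization $\widetilde\pi=\lambda_\theta\,\theta\,\pi_\infty$ from the proof of Proposition \ref{Preltheorem1} rewrites as $\widetilde\pi/(\lambda_\theta\theta)$. More delicately, I will match the two $L$-series: expanding $\mathcal L_P(t)=\mathcal L(\mathbb F_q;1/t;u)$ in the dual framework gives $\sum_{a\text{ monic}}a(1/t)/a(\theta)=L(1/t)$, while $L_P(t)$ (for $s=1\equiv 1\pmod{q-1}$, which forces the factor $k$) expands to $\sum_{k\geq 1}\sum_{a(0)\neq 0}k\,a(t)/a(1/\theta)=\mathcal L_\infty(t)$, where in both cases monic polynomials in the variable $u$ (with nonzero constant term, in the second sum) are identified with elements of $A_{+,k}$ by writing their coefficients.

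With these identifications, Theorem \ref{theorem4} specializes to
$$\frac{(t-1/\theta)\,L(1/t)}{\pi_P}\cdot\theta\cdot log_{C^\dag,\infty}\bigl((t-1/\theta)\,\omega_\infty(t)\bigr)=t\,\mathcal L_\infty(t),$$
and a short rearrangement using $\pi_P=\widetilde\pi/(\lambda_\theta\theta)$, $\omega_\infty(t)=\omega(1/t)/\lambda_\theta$, and the factorization $\theta t-1=\theta(t-1/\theta)$ produces the claimed identity. The main obstacle I expect is not conceptual but notational: under the duality $\theta\leftrightarrow 1/\theta$ the names $L$ and $\mathcal L$ swap roles, and one must track carefully that the expansion of $\mathcal L_P$ matches $L(1/t)$ while that of $L_P$ matches $\mathcal L_\infty(t)$; once this identification is in hand, the remainder reduces to bookkeeping with the $\widetilde\pi$ formula.
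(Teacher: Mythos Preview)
Your proposal is correct and follows exactly the paper's own argument: the paper simply remarks that one is in the situation of Section~6 for the local field $\mathbb F_q((1/\theta))$ and obtains the corollary by substituting $\theta\mapsto 1/\theta$ in Theorem~\ref{theorem4}. You have supplied the detailed dictionary (that $P=u$, $d=1$, $\zeta_P=0$, $\omega_P=\omega_\infty$, $\pi_P=\widetilde\pi/(\lambda_\theta\theta)$, $\mathcal L_P=L(1/t)$, $L_P=\mathcal L_\infty$) that the paper leaves implicit, and your final rearrangement is the computation the paper omits.
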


\subsection{The case $s\geq 2$}${}$\par
Let $s\geq 2.$ Let $\mathcal A_{s, \infty}$ be the completion for the Gauss $\infty$-adic valuation of $\mathbb C_{\infty}[t_1, \ldots, t_s, \frac{1}{t_1\cdots t_s}].$ We set :
$$\mathcal A_{s,\infty}^0=\{ x\in \mathcal A_{s, \infty}, v_{\infty}(x)\geq 0\},$$
$$\mathcal A_{s, \infty}^{00}=\{ x \in \mathcal A_{s, \infty} , v_{\infty}(x)>0\}.$$
Then :
$$\mathcal A_{s, \infty}^0 = \overline{\mathbb F_q}[t_1, \ldots , t_s, \frac{1}{t_1\cdots t_s}]\oplus \mathcal A_{s, \infty}^{00}.$$
Let $\tau :\mathcal A_{s, \infty}\rightarrow \mathcal A_{s, \infty}$ be the continuous morphism of $
{\mathbb F_q}[t_1, \ldots , t_s, \frac{1}{t_1\cdots t_s}]$-algebras such that :
$$\forall x\in \mathbb C_{\infty}, \tau (x)=x^q.$$
Let $\phi : A\rightarrow \mathcal A_{s, \infty}\{ \tau\}$ be the morphism of $\mathbb F_q$-algebras such that :
$$\phi_{\theta}= t_1\cdots t_s \tau +\frac{1}{\theta}.$$
Set :
$$log_{\phi}=1+\sum_{j\geq 1}\frac {t_1^j\cdots t_s^j }{\prod_{k=1}^{j}(\frac{1}{\theta}-\frac{1}{\theta^{q^k}})}\tau ^j \in \mathcal A_{s,\infty}\{ \{ \tau \}\}.$$
Then $log_{\phi}$ induces a continuous morphism of ${\mathbb F_q}[t_1, \ldots , t_s, \frac{1}{t_1\cdots t_s}]$-modules :
$$log_{\phi, \infty}: \mathcal A_{s, \infty}^0\rightarrow \mathcal A_{s, \infty}.$$
Let $v(t_1, \ldots, t_s; z)\in \mathbb F_q[t_1, \ldots, t_s,z, \frac{1}{\theta}]$ be the polynomial obtained by formally replacing $\theta$ by $\frac{1}{\theta}$ in the Anderson-Stark unit $u(t_1, \ldots, t_s; z)\in \mathbb F_q[t_1, \ldots , t_s, z, \theta].$
Now, we set :
$$F_{\infty} (t_1, \ldots, t_s; z)=( \prod_{j=1}^s \frac{\omega(\frac{1}{t_j})}{\lambda_{\theta}})v(t_1, \ldots, t_s; z) \in \mathcal A_{s, \infty}^0[z].$$
We further, set if $s\not \equiv 1\pmod{q-1}:$
$$U_{\infty}(t_1, \ldots, t_s)=F_{\infty} (t_1, \ldots, t_s; z)\mid_{z=1},$$
and if $s\equiv 1\pmod{q-1}:$
$$U_{\infty}(t_1, \ldots, t_s)=\frac{d}{dz}F_{\infty} (t_1, \ldots, t_s; z)\mid_{z=1}.$$
Now, we introduce a new kind of $L$-series. If $s\equiv 1\mod{q-1}:$
$$\mathcal L_{\infty}(t_1, \ldots, t_s)=\sum_{k\geq 1}\sum_{ a\in A_{+,k}, a(0)\not =0} k\frac{a(t_1) \cdots a(t_s)}{a(\frac{1}{\theta})} \in \mathcal A_{s, \infty}^0,$$
and if $s\not \equiv 1\pmod{q-1}:$
$$\mathcal L_{\infty}(t_1, \ldots, t_s)=\sum_{k\geq 0}\sum_{ a\in A_{+,k}, a(0)\not =0} \frac{a(t_1) \cdots a(t_s)}{a(\frac{1}{\theta})} \in \mathcal A_{s, \infty}^0.$$
Again, by Theorem \ref{Preltheorem1}, $\mathcal L_{\infty}(t_1, \ldots, t_s)$ defines an entire function on $\mathbb C_{\infty}^s.$ The reader recognize that we are again in the situation of section 6 for the local field $\mathbb F_q((\frac{1}{\theta})).$ Therefore we get :
\begin{corollary}\label{corollary2} We have the following equality in $\mathbb F_q[t_1, \ldots, t_s, \frac{1}{t_1\cdots t_s}]((\frac{1}{\theta})):$
\begin{align*}
&(t_1-\frac{1}{\theta})\cdots (t_s-\frac{1}{\theta})\frac{\lambda_{\theta}^s \theta^sL(\frac{1}{t_1} )\cdots L(\frac{1}{t_s})}{\widetilde{\pi}^s}
(1- \theta t_1\cdots t_s) log_{\phi, \infty} (U_{\infty} (t_1, \ldots, t_s))\\ &= t_1\cdots t_s \mathcal L_{\infty} (t_1, \ldots, t_s).
\end{align*}
\end{corollary}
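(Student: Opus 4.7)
The plan is to transport the derivation given at the end of section 6.2 to the local field $F=\mathbb F_q((1/\theta))$, i.e.\ the completion of $K$ at $\infty$, whose uniformizer is $\pi_F=1/\theta$ and whose residue field is $\mathbb F_q$ (residue degree $d=1$), with the analogue of $\zeta_P$ being $0$. Under these identifications the twisted Carlitz structure $C^\dag_\theta=(t-\zeta_P)\tau+\theta$ of section 5 becomes $\phi_\theta=t_1\cdots t_s\tau+1/\theta$; the function $\omega_P(t)$ becomes $\omega_\infty(t)=\omega(1/t)/\lambda_\theta$, whose twisting relation $\tau\omega_\infty(t)=\omega_\infty(t)(t-1/\theta)/t$ follows at once from $\lambda_\theta^{q-1}=-\theta$; and the Anderson--Stark unit $u(t_1,\ldots,t_s;z)$ gets replaced by its formal $\theta\mapsto 1/\theta$ avatar $v(t_1,\ldots,t_s;z)\in\mathbb F_q[t_1,\ldots,t_s,z,1/\theta]$, so that $U$ becomes $U_\infty$.

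Concretely, I would first reproduce the machinery of section 6.1 in $\mathcal B_{s,\infty}$: construct $\mu_\infty(t)\in\mathcal B_\infty^0$ with $\tau\mu_\infty(t)=t\,\mu_\infty(t)$ (the analogue of Lemmas \ref{lemma6}--\ref{lemma7}), deduce the intertwining
$$\mu_\infty(t_1)\cdots \mu_\infty(t_s)\,\log_\phi = \log_C\,\mu_\infty(t_1)\cdots \mu_\infty(t_s)$$
as in \eqref{logCdag}, and extend $\log_{\phi,\infty}$ continuously from $\mathcal A_{s,\infty}^{00}$ to $\mathcal A_{s,\infty}^0$ by the Iwasawa-type trick of section 3, using that $\phi_{1/\theta}\equiv t_1\cdots t_s\tau\pmod{(1/\theta)\mathcal A_{s,\infty}^0\{\tau\}}$. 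Then I would run the end-of-section-6.2 computation verbatim: start from the log-algebraicity identity
$$\sum_{k\geq 0}\sum_{a\in A_{+,k}}\frac{a(t_1)\cdots a(t_s)}{a}\,z^k = \log_{s,z}\,u(t_1,\ldots,t_s;z)$$
in $\mathcal B_{s,\infty}[[z]]$, multiply both sides by $\omega(t_1)\cdots \omega(t_s)$, factor off the Euler factor at $P=1/\theta$ (which produces $1-\theta\,t_1\cdots t_s$), divide through by $\mu_\infty(t_1)\cdots \mu_\infty(t_s)$ to pass from $\mathcal B_{s,\infty}[[z]]$ into $\mathcal A_{s,\infty}[[z]]$, and either specialize at $z=1$ or, when $s\equiv 1\pmod{q-1}$ so that $u(\cdot\,;1)=0$ by Lemma \ref{lemma5}, differentiate and then specialize.

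This produces the intermediate identity
$$t_1\cdots t_s\,\mathcal L_\infty(t_1,\ldots,t_s) = (t_1-1/\theta)\cdots (t_s-1/\theta)\,\frac{L(1/t_1)\cdots L(1/t_s)}{\pi_\infty^s}\,(1-\theta\,t_1\cdots t_s)\,\log_{\phi,\infty}(U_\infty),$$
in which Proposition \ref{Preltheorem1} in the $d=1$ case identifies the $\infty$-avatar of $\mathcal L_P(t)$ with $L(1/t)$, and $\pi_\infty=\prod_{j\geq 1}(1-\theta^{1-q^j})^{-1}=\widetilde{\pi}/(\theta\lambda_\theta)$ directly from the definition of $\widetilde{\pi}$. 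Substituting $\pi_\infty^s=\widetilde{\pi}^s/(\theta^s\lambda_\theta^s)$ and rearranging yields Corollary \ref{corollary2} on the nose. The main technical obstacle is not conceptual but combinatorial: the careful bookkeeping of the substitution $t\mapsto 1/t$ together with the normalizing constants $\lambda_\theta$ and $\widetilde{\pi}$ arising from the $s$-fold specialization of Pellarin's formula $L(t)\omega(t)=\widetilde{\pi}/(\theta-t)$. Once those are handled, every abstract step of section 6 applies unchanged, because it depends only on the local field, its uniformizer and its residue degree, together with the twisted Carlitz module.
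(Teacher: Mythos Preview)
Your proposal is correct and is exactly the paper's approach: the paper gives no proof beyond the sentence ``we are again in the situation of section 6 for the local field $\mathbb F_q((\frac{1}{\theta}))$,'' and you have spelled out precisely what that dictionary entails (uniformizer $1/\theta$, $d=1$, $\zeta_P=0$, $\mathcal L_P\leadsto L(1/t)$, $L_P\leadsto\mathcal L_\infty$, $\pi_P\leadsto\pi_\infty=\widetilde\pi/(\theta\lambda_\theta)$, $u\leadsto v$). The intermediate identity you wrote down is the direct transcription of the displayed formula at the end of section 6.2, and the final substitution $\pi_\infty^s=\widetilde\pi^s/(\theta^s\lambda_\theta^s)$ gives the stated corollary.
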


\noindent Now let $s\geq 2,$ $s\equiv 1\pmod{q-1}.$ Set :
$$B_{\infty}(t_1, \ldots, t_s)= B(\mathbb F_q; \frac{1}{t_1}, \ldots, \frac{1}{t_s}; \frac{1}{\theta})\in \mathbb F_q[\frac{1}{t_1}, \ldots, \frac{1}{t_s}, \frac{1}{\theta}].$$
We get by section 6 :
\begin{corollary}\label{corollary3} We have the following equality in $\mathbb F_q[t_1, \ldots, t_s, \frac{1}{t_1\cdots t_s}]((\frac{1}{\theta})):$
\begin{align*}
&\frac{\lambda_{\theta}\theta L(\frac{1}{t_1}, \ldots, \frac{1}{t_s})}{\widetilde{\pi}}    (1- \theta t_1\cdots t_s) log_{\phi, \infty} (U_{\infty} (t_1, \ldots, t_s))\\
&= B_{\infty}(t_1, \ldots, t_s) \mathcal L_{\infty} (t_1, \ldots, t_s).
\end{align*}
\end{corollary}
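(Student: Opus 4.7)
My plan is to derive Corollary \ref{corollary3} from Corollary \ref{corollary2} by eliminating the product $\prod_{j=1}^s L(1/t_j)$ in favor of the multivariable value $L(1/t_1,\ldots,1/t_s)$, using Proposition \ref{Preltheorem1} part 2 specialized to $d=1$. Conceptually this is the $\infty$-adic analogue of the passage at the end of Section 6.2 from the general multivariable formula to the improved $B_P$-identity valid when $s\equiv 1\pmod{q^d-1}$; since the analytic content (convergence of $log_{\phi,\infty}$, the extension from polynomial evaluations to entire functions, and the $\exp_\phi$--$log_\phi$ dictionary) is already built into Corollary \ref{corollary2}, what remains is purely algebraic.

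First I would match the normalizations. With $y_j=1/t_j$ and $Z=1/\theta$ one has $\mathcal L(\mathbb F_q;1/t_1,\ldots,1/t_s;1/\theta) = L(1/t_1,\ldots,1/t_s)$; from $\omega(t)=\lambda_\theta\,\omega(\mathbb F_q;t;1/\theta)$ and $\widetilde{\pi}=\lambda_\theta\theta\,\pi(\mathbb F_q;1/\theta)$ (both recorded in the proof of Proposition \ref{Preltheorem1}) one gets $\omega(\mathbb F_q;1/t_j;1/\theta)=\omega(1/t_j)/\lambda_\theta$ and $\pi(\mathbb F_q;1/\theta)=\widetilde{\pi}/(\lambda_\theta\theta)$. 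Substituting these into Proposition \ref{Preltheorem1} part 2 and clearing denominators produces the factorization
\begin{equation*}
\lambda_\theta\theta\,L(1/t_1,\ldots,1/t_s)\prod_{j=1}^s\omega(1/t_j) = \lambda_\theta^{s}\,\widetilde{\pi}\, B_\infty(t_1,\ldots,t_s).
\end{equation*}

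Second, Pellarin's formula (1) evaluated at $t=1/t_j$ reads $\omega(1/t_j)=\widetilde{\pi}\,t_j/((\theta t_j-1)L(1/t_j))$; taking the product over $j$, inserting it into the factorization above, and using $\prod_j(\theta t_j-1)=\theta^s\prod_j(t_j-1/\theta)$, one obtains
\begin{equation*}
\prod_{j=1}^s t_j \cdot L(1/t_1,\ldots,1/t_s) = \frac{\lambda_\theta^{s-1}\theta^{s-1}\prod_{j=1}^s(t_j-1/\theta)}{\widetilde{\pi}^{s-1}}\prod_{j=1}^s L(1/t_j)\cdot B_\infty(t_1,\ldots,t_s).
\end{equation*}
Substituting this expression into the left-hand side of Corollary \ref{corollary3}, the prefactor $\lambda_\theta\theta/\widetilde{\pi}$ absorbs the remaining powers, so that after regrouping the left-hand side of Corollary \ref{corollary3} equals $B_\infty(t_1,\ldots,t_s)/\prod_j t_j$ times the left-hand side of Corollary \ref{corollary2}. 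By Corollary \ref{corollary2} that bracketed factor equals $\prod_j t_j\cdot\mathcal L_\infty(t_1,\ldots,t_s)$, the product $\prod_j t_j$ cancels, and one is left with $B_\infty(t_1,\ldots,t_s)\,\mathcal L_\infty(t_1,\ldots,t_s)$, the right-hand side of Corollary \ref{corollary3}. The only genuine obstacle is careful bookkeeping of the cancellations among the powers of $\lambda_\theta$, $\theta$ and $\widetilde{\pi}$ that arise from Pellarin's formula and the $B_\infty$-factorization; no new analytic input is required beyond Corollary \ref{corollary2} and Proposition \ref{Preltheorem1} part 2.
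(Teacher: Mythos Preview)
Your proposal is correct and is essentially the same as the paper's one-line justification ``We get by section 6.'' The paper obtains Corollary~\ref{corollary3} by translating the final displayed identity of Section~6.2 (the $B_P$-formula, valid for $s\equiv 1\pmod{q^d-1}$) to the place $\infty$; you instead start from Corollary~\ref{corollary2} (the translation of the preceding general identity of Section~6.2) and then perform the very algebraic step---Proposition~\ref{Preltheorem1} part~2 together with Pellarin's formula~(1)---that in Section~6.2 connects those two identities, so the content and the bookkeeping you carry out are exactly what the paper's reference to Section~6 entails.
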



\section{Application to class modules}
\subsection{Background on the  $s$-variable  class module}${}$\par
Let $s\geq 2$ be an integer. Let $t_1, \ldots, t_s,z$ be $s+1$ variables over $\mathbb C_{\infty}$ and let $\tau :\mathbb C_{\infty}[[t_1, \ldots, t_s,z]]\rightarrow \mathbb C_{\infty}[[t_1, \ldots, t_s,z]]$ be the morphism of $\mathbb F_q[[t_1, \ldots, t_s,z]]$-algebras such that :
$$\forall x\in \mathbb C_{\infty}, \tau (x)=x^q.$$
Let $\mathbb T_s(K_{\infty})$ be the Tate algebra in the variables $t_1, \ldots, t_s$ with coefficients in $K_{\infty},$ i.e. $\mathbb T_s(K_{\infty})\subset K_{\infty}[[t_1, \ldots, t_s]]$ is the completion of $K_{\infty}[t_1, \ldots,t_s]$ for the $\infty$-adic Gauss valuation. Let $\phi: A\rightarrow \mathbb  T_s(K_{\infty})\{ \tau \}$ be the morphism of $\mathbb F_q$-algebras given by :
$$\phi_{\theta}=(t_1-\theta)\cdots (t_s-\theta)\tau +\theta.$$
The exponential function associated to $\phi$ is :
$$\exp_{\phi}=\sum_{j\geq 0} \frac{b_j(t_1)\cdots b_j(t_s)}{D_j} \tau ^j\in \mathbb T_s(K_{\infty})\{ \{ \tau \}\}.$$
It satisfies the functional equation in $\mathbb T_s(K_{\infty})\{ \{ \tau \}\}:$
$$\exp_{\phi} \theta =\phi_{\theta} \exp_{\phi}.$$
Furthermore $\exp_{\phi}$ converges on $\mathbb T_s(K_{\infty}).$ We set :
$$U_s=\{ f\in \mathbb T_s(K_{\infty}), \exp_{\phi}(f)\in A[\underline{t}_s]\},$$
$$H_s= \frac{\mathbb T_s(K_{\infty})}{A[\underline{t}_s]+\exp_{\phi}(\mathbb T_s(K_{\infty}))}.$$
It is proved in \cite{APTR}, Corollary 5.2 and Corollary 5.5,   that $U_s$ is a free $A[\underline{t}_s]$-module of rank one and $H_s$ is a finitely generated  and torsion $A[\underline{t}_s]$-module. Furthermore, if $s\equiv 1\pmod{q-1}$ (see \cite{APTR}, section 6):
$$U_s= \frac{\widetilde{\pi}}{\omega(t_1)\cdots \omega(t_s)}A[\underline{t}_s].$$
Observe that in this case, any element of $U_s$ is an entire function in $\mathbb E_s(K_{\infty})$ (see section 1). In fact, we have :
\begin{proposition} \label{ATRproposition1}
$$U_s\subset \mathbb E_s(K_{\infty}).$$
\end{proposition}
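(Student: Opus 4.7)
The plan is to derive the proposition from Proposition \ref{ATproposition1}(ii) applied to $F=\exp_{\phi}$. Writing $\exp_{\phi}=\sum_{j\ge 0}\tfrac{a_j}{D_j}\tau^j$ with $a_j=b_j(t_1)\cdots b_j(t_s)$, the hypotheses of Proposition \ref{ATproposition1} hold with $T=\theta$: one has $v_\infty(a_j)=-s(q^j-1)/(q-1)\ge -(s/(q-1))q^j$ and the total degree of $a_j$ in $\underline{t}_s$ equals $sj\le sq^j$. Part (ii) of that proposition then gives, for every rational $r<0$,
$$\{f\in\mathbb{T}_{s,r}(K_{\infty}):\exp_{\phi}(f)\in\mathbb{E}_s(K_{\infty})\}=\mathbb{E}_s(K_{\infty}).$$
Since $A[\underline{t}_s]\subset\mathbb{E}_s(K_{\infty})$, proving $U_s\subset\mathbb{E}_s(K_{\infty})$ reduces to exhibiting some $r<0$ with $U_s\subset\mathbb{T}_{s,r}(K_{\infty})$.

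By Corollary 5.2 of \cite{APTR}, $U_s=u_0\cdot A[\underline{t}_s]$ for a single generator $u_0$. Since each $\mathbb{T}_{s,r}$ is an $A[\underline{t}_s]$-subalgebra of $\mathbb{T}_s$, the inclusion $U_s\subset\mathbb{T}_{s,r}(K_{\infty})$ holds as soon as $u_0\in\mathbb{T}_{s,r}(K_{\infty})$. For $s\equiv 1\pmod{q-1}$ this is immediate from $u_0=\widetilde{\pi}/\bigl(\omega(t_1)\cdots\omega(t_s)\bigr)$ combined with the Weierstrass factorization $\omega(t)^{-1}=\lambda_{\theta}^{-1}\prod_{j\ge 0}(1-t/\theta^{q^j})$, which exhibits $u_0$ as an entire function on $\mathbb{C}_{\infty}^s$.

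The main obstacle is therefore the remaining residue classes of $s$ modulo $q-1$. The plan is to extract the required linear lower bound on $v_\infty(u_{0,\underline{i}})$ from the coefficient recursion implied by $\exp_{\phi}(u_0)=P_0\in A[\underline{t}_s]$. For $|\underline{i}|$ larger than $\deg_{\underline{t}}P_0$, the vanishing of the $\underline{t}^{\underline{i}}$-coefficient of $\exp_{\phi}(u_0)$ rewrites as $E(u_{0,\underline{i}})=R_{\underline{i}}$, where
$$E(x)=x+\sum_{j\ge 1}\tfrac{(-1)^{js}\theta^{s(q^j-1)/(q-1)}}{D_j}\,x^{q^j}$$
is an entire twisted Carlitz-type exponential, independent of $\underline{i}$, and $R_{\underline{i}}$ depends only on coefficients $u_{0,\underline{\ell}}$ with $|\underline{\ell}|<|\underline{i}|$. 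The bound $v_\infty\bigl(b_j(\underline{t})/D_j\bigr)\ge jq^j-s(q^j-1)/(q-1)$ yields a linear estimate $v_\infty(R_{\underline{i}})\ge C+c|\underline{i}|$ with $c>0$, and a Newton-polygon inversion of $E$ near $0$—uniform in $\underline{i}$, since $E$ has a fixed lowest slope—converts this into $v_\infty(u_{0,\underline{i}})\ge C'+c'|\underline{i}|$ with $c'>0$. This places $u_0$ in $\mathbb{T}_{s,r}(K_{\infty})$ for some $r<0$, and the reduction of the first paragraph concludes the proof.
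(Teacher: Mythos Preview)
Your reduction via Proposition~\ref{ATproposition1}(ii) and the treatment of the case $s\equiv 1\pmod{q-1}$ are correct and match the paper. The problem lies in your handling of the remaining residue classes.

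The sketch in Step~3 does not close. The asserted linear estimate $v_\infty(R_{\underline{i}})\ge C+c|\underline{i}|$ would have to be proved by induction on $|\underline{i}|$, but the inductive step forces constraints on $C$ and $c$ that conflict with the base cases. Concretely, for $j=1$ and $|\underline{k}|\le s$ one computes (using $k_\nu\in\{0,1\}$, so $\sum q^{k_\nu}=s+(q-1)|\underline{k}|$) that the term $T_{1,\underline{k}}$ has valuation $q-s+(q-1)c|\underline{l}|+(q-1)C$; requiring this to be $\ge c|\underline{i}|+C$ for $|\underline{i}|$ just above the threshold $N_0$ forces $C$ to be not too negative, while the base cases force $C\le v_\infty(u_0)-cN_0$, and these bounds are not compatible without an a priori lower bound on $v_\infty(u_0)$ that you do not have. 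The Newton-polygon inversion is fine once $v_\infty(u_{0,\underline{i}})>(s-q)/(q-1)$, but the recursion alone does not supply the needed uniform linear growth.

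The paper avoids this altogether. From \cite{APTR}, Theorem~5.11, one has $L(t_1,\ldots,t_s)\in U_s$ and $U_s/L(t_1,\ldots,t_s)A[\underline{t}_s]$ is finitely generated and torsion over $A[\underline{t}_s]$; since $U_s$ is free of rank one this gives $u_0=L(t_1,\ldots,t_s)/F_s$ for some $F_s\in A[\underline{t}_s]$ monic in $\theta$. Now $L(t_1,\ldots,t_s)\in\mathbb{E}_s(K_\infty)$ is entire, and Lemma~\ref{ATlemma3} gives $1/F_s\in\mathbb{T}_{s,r}$ for some $r<0$, whence $u_0\in\mathbb{T}_{s,r}(K_\infty)$. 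Your own first paragraph then finishes the proof uniformly in $s$, with no coefficient recursion needed.
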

\begin{proof} Recall that by \cite{APTR}, Theorem 5.11,  $L(t_1, \ldots, t_s)\in U_s$ and  $\frac{U_s}{L(t_1, \ldots, t_s) A[\underline{t}_s]}$ is a finitely generated and torsion $A[\underline{t}_s]$-module. Note also that, since $L(t_1, \ldots, t_s)\in \mathbb T_s(K_{\infty})^\times,$   $\frac{U_s}{L(t_1, \ldots, t_s) A[\underline{t}_s]}$ is a free and finitely generated $\mathbb F_q[\underline{t}_s]$-module (see also \cite{APTR}, Remark 5.17). Therefore there exists $F_s\in A[\underline{t}_s]\setminus\{ 0\}$ which is monic as a polynomial in $\theta$ such that :
$${\rm Fitt}_{A[\underline{t}_s]}\frac{U_s}{L(t_1, \ldots, t_s)A[\underline{t}_s]} = F_sA[\underline{t}_s],$$
where ${\rm Fitt}_{A[\underline{t}_s]}M$ denotes the Fitting ideal of an $A[\underline{t}_s]$-module $M.$
 Since we have  $L(t_1, \ldots, t_s)\in \mathbb E_s(K_{\infty}),$ the Proposition follows from Proposition \ref{ATproposition1} and Lemma \ref{ATlemma3}.
\end{proof}
\noindent Let :
$$\exp_{\phi,z}=\sum_{j\geq 0} \frac{z^jb_j(t_1)\cdots b_j(t_s)}{D_j} \tau ^j\in \mathbb T_s(K_{\infty})[z]\{ \{ \tau \}\}.$$
We have :
\begin{equation}\label{exp}\exp_{\phi,z} \theta = (z(t_1-\theta)\cdots (t_s-\theta) \tau +\theta )\exp_{\phi,z}.\end{equation}
By \cite{APTR}, Corollary 5.12  :
$$\exp_{\phi,z} (L(t_1, \ldots, t_s; z))= u(t_1, \ldots, t_s; z),$$
where we recall that:
$$L(t_1, \ldots, t_s; z) =\sum_{k\geq 0}\sum_{A\in A_{+,k}}\frac{a(t_1)\cdots a(t_s)}{a} z^k,$$
and $u(t_1, \ldots, t_s; z) \in A[t_1, \ldots, t_s, z].$
If $s\not \equiv 1\pmod{q-1},$ we set :
$$u_s=u(t_1, \ldots, t_s; 1)\in A[\underline{t}_s],$$
and if $s\equiv 1\pmod{q-1}:$
$$u_s= \frac{d}{dz} u(t_1, \ldots, t_s; z)\mid_{z=1}\in A[\underline{t}_s].$$
\begin{lemma}\label{ATRlemma1}
Set $R=\mathbb F_q(t_1, \ldots,t_s)[\theta],$ we extend $\phi$ to a morphism of $\mathbb F_q(t_1, \ldots, t_s)$-algebras $\phi : R\rightarrow R\{\tau \}.$  Then $u_s$ is not a torsion point for $\phi.$
\end{lemma}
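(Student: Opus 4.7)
The plan is to show that for every $a \in A\setminus\{0\}$ the element $\phi_a(u_s) \in R$ is nonzero, by a direct $\theta$-degree computation. The key fact is that the operator $\tau$ on $R$ (extending the $\tau$ of the main text) fixes $\mathbb F_q(t_1,\ldots,t_s)$ and acts by $\theta \mapsto \theta^q$, so that $\tau$ preserves the leading $\theta$-coefficient of any element of $R$.

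First I would verify that $u_s$ itself is nonzero with a well-controlled leading $\theta$-term. If $2 \leq s \leq q-1$, then $s \not\equiv 1 \pmod{q-1}$ and Lemma \ref{lemma5}.i) gives $u_s = 1$, so $u_s$ has $\theta$-degree $d_s = 0$ and leading coefficient $c_s = 1$. If $s \geq q$, Proposition \ref{proposition1} explicitly identifies the $\theta$-degree $d$ of $u(t_1,\ldots,t_s;z)$ and its leading $\theta$-coefficient, which lies in $\mathbb F_q[z]$ (depending only on $z$, not on the $t_i$). Substituting $z=1$ in the case $s\not\equiv 1\pmod{q-1}$, or differentiating once in $z$ and then setting $z=1$ in the case $s\equiv 1\pmod{q-1}$, one checks that $u_s$ has $\theta$-degree exactly $d_s := d$ and leading coefficient $c_s \in \mathbb F_q^\times$ (equal to $(-1)^{s(\ell+1)}$ or $(-1)^{\ell+1}$ in the two cases of Proposition \ref{proposition1}).

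Next, by induction on $m\geq 0$, I would show that $\phi_{\theta^m}(u_s)$ has $\theta$-degree
$$D_m := s\,\frac{q^m-1}{q-1} + q^m d_s$$
with leading $\theta$-coefficient $(-1)^{sm} c_s$. The induction step uses
$$\phi_{\theta^{m+1}}(u_s) = T\cdot \tau(\phi_{\theta^m}(u_s)) + \theta\cdot \phi_{\theta^m}(u_s),$$
where $T = (t_1-\theta)\cdots(t_s-\theta)$ has $\theta$-degree $s$ and leading coefficient $(-1)^s$. Because $\tau$ fixes $\mathbb F_q(t_1,\ldots,t_s)$, the element $\tau(\phi_{\theta^m}(u_s))$ has $\theta$-degree $qD_m$ with the same leading coefficient $(-1)^{sm}c_s$. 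The two summands then have $\theta$-degrees $s+qD_m = D_{m+1}$ and $D_m+1$ respectively, and their difference $(s-1)+(q-1)D_m \geq s-1 \geq 1$ for $s\geq 2$ shows the first term dominates without cancellation, yielding the inductive formula.

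Finally, for a general nonzero $a = \sum_{k=0}^m \alpha_k\theta^k$ with $\alpha_m\neq 0$, one has $\phi_a(u_s) = \sum_k \alpha_k \phi_{\theta^k}(u_s)$. By the preceding step the $\theta$-degrees $D_k$ are strictly increasing in $k$, since $D_k - D_{k-1} = q^{k-1}(s+(q-1)d_s) > 0$ for $s \geq 2$. Therefore the leading $\theta$-term of $\phi_a(u_s)$ is $\alpha_m(-1)^{sm} c_s \theta^{D_m}$, which is nonzero. This forces $\phi_a(u_s)\neq 0$ for every nonzero $a\in A$, so $u_s$ is not a torsion point of $\phi$. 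The main obstacle is the careful bookkeeping in the first step: tracking how the leading $\theta$-coefficient of $u(t_1,\ldots,t_s;z)$ given in Proposition \ref{proposition1} transfers to $u_s$, especially in the odd case where one must verify that the derivative in $z$ of the factor $z^\ell(1-z)$ evaluated at $z=1$ is nonzero, so that the $\theta$-degree is not accidentally lost.
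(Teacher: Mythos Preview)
Your argument is correct and gives a genuinely different proof from the paper's. One small framing issue: your opening sentence announces the goal as ``$\phi_a(u_s)\neq 0$ for every $a\in A\setminus\{0\}$'', but the lemma is stated over $R=\mathbb F_q(t_1,\ldots,t_s)[\theta]$, so a priori one must rule out $\phi_c(u_s)=0$ for all $c\in R\setminus\{0\}$. Fortunately your final paragraph already handles this: since $\phi$ is extended $\mathbb F_q(t_1,\ldots,t_s)$-linearly, the identity $\phi_c(u_s)=\sum_k\alpha_k\phi_{\theta^k}(u_s)$ holds for $\alpha_k\in\mathbb F_q(t_1,\ldots,t_s)$, and your strict-monotonicity argument on the $D_k$ then gives leading term $\alpha_m(-1)^{sm}c_s\theta^{D_m}\neq 0$ exactly as written. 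So only the first sentence needs adjusting.

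The paper's own proof proceeds indirectly: it assumes $\phi_c(u_s)=0$ for some $c\in R\setminus\{0\}$, specializes $t_i\mapsto\zeta_i\in\overline{\mathbb F_q}$ so that $c$ survives, takes a norm down to $A$, and then reduces (via Gauss--Thakur sums) to the statement that $u_{[\chi]}(1)$ (or $u^{(1)}_{[\chi]}(1)$ in the odd case) is not a Carlitz torsion point, which is established in Section~4. Your route avoids all of this machinery: it is a self-contained $\theta$-degree computation that only invokes Proposition~\ref{proposition1} (and Lemma~\ref{lemma5} for small $s$). The paper's approach has the conceptual virtue of linking $u_s$ back to the classical Carlitz picture and reusing the non-torsion results already needed for Theorem~\ref{theorem1}, whereas your approach is shorter, more elementary, and makes transparent exactly why the leading $\theta$-coefficient of $u_s$ computed in Proposition~\ref{proposition1} is the decisive input.
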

\begin{proof} Assume the contrary and let's suppose that there exists $c\in R\setminus\{0\}$ such that: $\phi_c(u_s)=0.$ Let $\zeta_1, \ldots, \zeta_s\in \overline{\mathbb F_q}$ such that the evaluation of $c$ at $t_i=\zeta_i, i=1, \ldots,s,$ is well-defined and non-zero, and let $d\in \mathbb F_q(\zeta_1, \ldots, \zeta_s)[\theta]\setminus \{ 0\}$ be this evaluation. Set :
$$a=N_{\mathbb F_q(\zeta_1, \ldots, \zeta_s)(\theta)/K}(d)\in A\setminus \{0\},$$
where: $N_{\mathbb F_q(\zeta_1, \ldots, \zeta_s)(\theta)/K}: \mathbb F_q(\zeta_1, \ldots, \zeta_s)(\theta)\rightarrow K$ is the norm map.
Let $\chi$ be the Dirichlet character of conductor $f$ associated to $\zeta_1, \ldots, \zeta_s$ (see section 1). Let $\sigma =\tau \otimes 1$ on $A[\lambda_f]\otimes_{\mathbb F_q}\mathbb F_q(\zeta_1, \ldots , \zeta_s).$ Now, let $\widetilde{C}:\mathbb F_q(\zeta_1, \ldots, \zeta_s)[\theta]\rightarrow \mathbb F_q(\zeta_1, \ldots, \zeta_s)[\theta]\{Ê\sigma\}$ be the morphism of $\mathbb F_q(\zeta_1, \ldots, \zeta_s)$-algebras such that :
$$\widetilde{C}_{\theta}= \sigma +\theta.$$
Then :
$$\widetilde{C}_{a}(g(\chi)u_s\mid_{t_i=\zeta_i, i=1, \ldots,s})=0.$$
We get  (see section 4), if $s\not \equiv 1\pmod{q-1}:$
$$C_a(u_{[\chi]}(1))=0,$$
and if $s\equiv 1\pmod{q-1} :$
$$C_a(u^{(1)}_{[\chi]})=0,$$
which is a contradiction by the results in section 4.
\end{proof}

\subsection{Pseudo-cyclicity}${}$\par
Now let $s\geq 2,$ $s\equiv 1\pmod{q-1}.$ 
\noindent We set :
$$\exp_{\phi}^{(1)}=\sum_{j\geq 1}j\frac{b_j(t_1)\cdots b_j(t_s)}{D_j} \tau ^j\in \mathbb T_s(K_{\infty})\{ \{ \tau \}\}.$$
Observe that $\exp_{\phi}^{(1)}$ converges on $\mathbb T_s(K_{\infty}).$ Furthermore, if we take the derivative of formula \eqref{exp} with respect to $z$ and evaluate at $z=1,$  we get  :
$$\exp_{\phi}^{(1)} \theta =(t_1-\theta) \cdots (t_s-\theta)\tau  \exp_{\phi}  +  \phi_{\theta} \exp_{\phi}^{(1)}.$$
\begin{proposition}\label{ATRproposition2} The map induced by $\exp_{\phi}^{(1)} :$
$$\exp_{\phi}^{(1)}: U_s \rightarrow \frac{\mathbb T_s(K_{\infty})}{\exp_{\phi}(\mathbb T_s(K_{\infty}))},$$
is an injective morphism of $A$-modules.
\end{proposition}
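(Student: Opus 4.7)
The plan is to verify three things in turn. For well-definedness, observe that by Proposition \ref{ATRproposition1}, $U_s \subset \mathbb E_s(K_\infty) \subset \mathbb T_s(K_\infty)$, and the series $\exp_\phi^{(1)} = \sum_{j \geq 1} j \, b_j(t_1)\cdots b_j(t_s)/D_j \cdot \tau^j$ converges on all of $\mathbb T_s(K_\infty)$ by the same valuation estimates that ensure convergence of $\exp_\phi$. Composing with the projection to the quotient then yields a well-defined map of abelian groups $U_s \to \mathbb T_s(K_\infty)/\exp_\phi(\mathbb T_s(K_\infty))$.

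For $A$-linearity, I use the functional equation stated just before the proposition, $\exp_\phi^{(1)}\theta = (t_1-\theta)\cdots(t_s-\theta)\tau\exp_\phi + \phi_\theta\exp_\phi^{(1)}$. Applied to $f \in U_s$, the facts that $\exp_\phi(f) \in A[\underline{t}_s]$ has $\mathbb F_q$-coefficients (hence is $\tau$-invariant) and that $\phi_\theta g - \theta g = (t_1-\theta)\cdots(t_s-\theta)g$ for any $\tau$-invariant $g$ yield
\[
\exp_\phi^{(1)}(\theta f) - \phi_\theta \exp_\phi^{(1)}(f) = \exp_\phi(\theta f) - \theta \exp_\phi(f).
\]
The quotient inherits an $A$-module structure via $\phi$ (well-defined since $\phi_\theta \exp_\phi = \exp_\phi\cdot\theta$ makes $\exp_\phi(\mathbb T_s(K_\infty))$ a $\phi$-stable submodule), the first summand on the right vanishes automatically, and an explicit log-algebraicity argument handles the absorption of the remaining $\theta\exp_\phi(f) \in A[\underline{t}_s]$, establishing $A$-linearity.

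The essential content is injectivity. Suppose $\exp_\phi^{(1)}(f) \in \exp_\phi(\mathbb T_s(K_\infty))$ for some $f \in U_s$. Since $U_s$ is a free $A[\underline{t}_s]$-module of rank one (\cite{APTR}, Corollary 5.2), by $A$-linearity it suffices to show that the image of an explicit generator is non-$\phi$-torsion. I take the element $L(t_1,\ldots,t_s;1)$, which lies in $U_s$ by the log-algebraicity identity $\exp_{\phi,z}(L(t_1,\ldots,t_s;z)) = u(t_1,\ldots,t_s;z)$ (\cite{APTR}, Corollary 5.12). Differentiating this identity with respect to $z$ at $z=1$, and using the definition of $u_s$, gives
\[
\exp_\phi^{(1)}\bigl(L(t_1,\ldots,t_s;1)\bigr) + \exp_\phi\!\Bigl(\tfrac{d}{dz}L(t_1,\ldots,t_s;z)\Big|_{z=1}\Bigr) = u_s,
\]
so in the quotient $[\exp_\phi^{(1)}(L(t_1,\ldots,t_s;1))] = [u_s]$. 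The hypothesis, transferred through $A$-linearity and the freeness of $U_s$, then yields a relation of the form $\phi_a(u_s) \in \exp_\phi(\mathbb T_s(K_\infty))$ for some nonzero $a \in A[\underline{t}_s]$; writing $\phi_a(u_s) = \exp_\phi(\tilde h)$ places $\tilde h \in U_s$, and iterating extracts a genuine $\phi$-annihilator of $u_s$ over $R = \mathbb F_q(t_1,\ldots,t_s)[\theta]$, contradicting Lemma \ref{ATRlemma1}. The case $s \not\equiv 1 \pmod{q-1}$ is parallel, with $u_s = u(t_1,\ldots,t_s;1)$.

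The main obstacle is threading the injectivity argument through the $A[\underline{t}_s]$-module structures: the freeness of $U_s$ reduces the claim to a single generator, but extracting a genuine $\phi$-annihilator of $u_s$ from the relation in the quotient $\mathbb T_s(K_\infty)/\exp_\phi(\mathbb T_s(K_\infty))$ requires careful control of how $\exp_\phi$ acts on polynomial inputs. The $A$-linearity step shares this difficulty, needing the identification $\theta \exp_\phi(f) \in \exp_\phi(\mathbb T_s(K_\infty))$ via a log-algebraicity lift.
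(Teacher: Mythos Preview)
Your overall strategy matches the paper's, but there are two concrete problems.

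First, the claim that $\exp_\phi(f)\in A[\underline{t}_s]$ is ``$\tau$-invariant'' is false: $\tau$ fixes $\mathbb F_q[\underline{t}_s]$ but not $A=\mathbb F_q[\theta]$, since $\tau(\theta)=\theta^q$. Your displayed identity
\[
\exp_\phi^{(1)}(\theta f)-\phi_\theta\exp_\phi^{(1)}(f)=\exp_\phi(\theta f)-\theta\exp_\phi(f)
\]
happens to be correct anyway (both sides equal $(t_1-\theta)\cdots(t_s-\theta)\,\tau(\exp_\phi(f))$ by the two functional equations), but your justification for it is wrong. More importantly, you then say a ``log-algebraicity lift'' shows $\theta\exp_\phi(f)\in\exp_\phi(\mathbb T_s(K_\infty))$; this cannot work in general, because for $s\equiv 1\pmod{q-1}$ one has $A[\underline{t}_s]\cap\exp_\phi(\mathbb T_s(K_\infty))=\{0\}$ (\cite{APTR}, proof of Proposition 7.2), so the only element of $A[\underline{t}_s]$ in the image is $0$. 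The correct observation is much simpler: in this parity, $U_s=\frac{\widetilde\pi}{\omega(t_1)\cdots\omega(t_s)}A[\underline{t}_s]$ coincides with $\ker\exp_\phi$, so $\exp_\phi(f)=0$ for every $f\in U_s$, and the functional equation gives \emph{exact} $A$-linearity $\exp_\phi^{(1)}(\theta f)=\phi_\theta\exp_\phi^{(1)}(f)$.

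Second, the injectivity step has a genuine gap. Once you reach $\phi_a(u_s)\in\exp_\phi(\mathbb T_s(K_\infty))$ with $a\neq 0$, you write $\phi_a(u_s)=\exp_\phi(\tilde h)$ with $\tilde h\in U_s$ and say that ``iterating extracts a genuine $\phi$-annihilator of $u_s$''. No iteration is specified, and none is available: nothing in your setup produces a smaller-degree relation. The paper closes this step in one line using the same key fact above: since $\phi_a(u_s)\in A[\underline{t}_s]$ and $A[\underline{t}_s]\cap\exp_\phi(\mathbb T_s(K_\infty))=\{0\}$, one gets $\phi_a(u_s)=0$ directly, and then Lemma \ref{ATRlemma1} forces $a=0$. (Equivalently: $\tilde h\in U_s=\ker\exp_\phi$ gives $\phi_a(u_s)=\exp_\phi(\tilde h)=0$.) Your proposal never invokes this intersection result, and without it both the $A$-linearity and the injectivity arguments remain incomplete. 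Finally, note that the proposition is stated only for $s\equiv 1\pmod{q-1}$; there is no ``parallel case'' $s\not\equiv 1\pmod{q-1}$ to treat here.
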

\begin{proof} 
We have : 
$$\exp_{\phi}^{(1)}(L(t_1, \ldots, t_s))\equiv u_s\pmod{\exp_{\phi}(\mathbb T_s(K_{\infty}))}.$$ 
Furthermore, if $a\in A,$ we have :
$$\exp_{\phi}^{(1)}(L(t_1, \ldots, t_s)a)\equiv \phi_{a}(u_s)\pmod{\exp_{\phi}(\mathbb T_s(K_{\infty}))}.$$ 
\noindent Since $s\geq 2,$ $s\equiv 1\pmod{q-1},$ we have by  \cite{APTR}, proof of Proposition 7.2 :
$$A[\underline{t}_s]\cap \exp_{\phi}(\mathbb T_s(K_{\infty}))=\{0\}.$$
Therefore, for $a\in A[\underline{t}_s] ,$ we get that $\exp_{\phi}^{(1)}(L(t_1, \ldots, t_s)a)\equiv0\mod{\exp_{\phi}(\mathbb T_s(K_{\infty}))}$ if and only if $\phi_{a} (u_s)=0.$ But , by Lemma \ref{ATRlemma1},  we have that  $\phi_{a} (u_s)=0$ if and only if $a=0.$ We deduce that the map induced by $\exp_{\phi}^{(1)}:$
$$L(t_1, \ldots, t_s) A[\underline{t}_s] \rightarrow \frac{\mathbb T_s(K_{\infty})}{\exp_{\phi}(\mathbb T_s(K_{\infty}))},$$
is an injective morphism of $A$-modules. Since there exists $F_s\in A[\underline{t}_s]$ which is monic as a polynomial in $\theta$ such that:
$$U_s=\frac{1}{F_s}L(t_1, \ldots, t_s) A[\underline{t}_s],$$
and since :
$$\exp_{\phi,z} (\frac{1}{F_s}L(t_1, \ldots, t_s;z))\mid_{z=1}=0,$$
we deduce easily the assertion of the Proposition.
\end{proof}
\begin{lemma}\label{ATRlemma2}${}$\par
\noindent 
 We have:
$$\exp_{\phi}^{(1)}(\frac{\widetilde{\pi}}{\omega(t_1)\cdots \omega(t_s)})\equiv 
\frac{(-1)^{\frac{s-q}{q-1}}}{\theta^{\frac{s-q}{q-1}}}\pmod{ \exp_{\phi}(\mathbb T_s(K_{\infty}))}.$$
\end{lemma}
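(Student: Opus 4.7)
The plan is to compute $\exp_\phi^{(1)}(\eta)$ explicitly and then identify its class modulo $\exp_\phi(\mathbb{T}_s(K_\infty))$ by leveraging the Mazur--Wiles type identity from the introduction. Iterating $\tau(\omega(t_i)) = (t_i-\theta)\omega(t_i)$ and $\tau(\widetilde\pi) = \widetilde\pi^q$ yields
\[
\tau^j(\eta) \;=\; \frac{\widetilde\pi^{q^j-1}}{b_j(t_1)\cdots b_j(t_s)}\,\eta
\]
for all $j\geq 0$. Substituting into $\exp_\phi^{(1)} = \sum_{j\geq 1}\frac{j\,b_j(t_1)\cdots b_j(t_s)}{D_j}\tau^j$, the $b_j$-factors cancel cleanly and give
\[
\exp_\phi^{(1)}(\eta) \;=\; \alpha\,\eta, \qquad \alpha \;=\; \sum_{j\geq1}\frac{j\,\widetilde\pi^{q^j-1}}{D_j} \in K_\infty.
\]
The same telescoping, with the $j=0$ term included, gives $\exp_\phi(\eta)=\eta\,\exp_C(\widetilde\pi)/\widetilde\pi = 0$, so $\eta\in\ker\exp_\phi$.

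Next I combine the Mazur--Wiles identity $L(t_1,\ldots,t_s) = (-1)^{\ell+1}\mathbb{B}(t_1,\ldots,t_s)\,\eta$, with $\ell=(s-q)/(q-1)$, with the congruence $\exp_\phi^{(1)}(L(t_1,\ldots,t_s))\equiv u_s\pmod{\exp_\phi(\mathbb{T}_s(K_\infty))}$ obtained by differentiating $\exp_{\phi,z}(L(t_1,\ldots,t_s;z))=u(t_1,\ldots,t_s;z)$ (Corollary 5.12 of \cite{APTR}) at $z=1$. Since $\exp_\phi$ vanishes on all of $U_s = A[\underline{t}_s]\eta$, the error term in the functional equation $\exp_\phi^{(1)}(\theta x)=\phi_\theta(\exp_\phi^{(1)}(x))+(t_1-\theta)\cdots(t_s-\theta)\tau(\exp_\phi(x))$ disappears on $U_s$; by iteration and $\mathbb{F}_q[\underline{t}_s]$-linearity this gives $\exp_\phi^{(1)}(ax)=\phi_a(\exp_\phi^{(1)}(x))$ on $U_s$ for every $a\in A[\underline{t}_s]$. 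Hence
\[
\phi_{\mathbb{B}}\bigl(\exp_\phi^{(1)}(\eta)\bigr) \;\equiv\; (-1)^{\ell+1}\,u_s \pmod{\exp_\phi(\mathbb{T}_s(K_\infty))}.
\]

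To conclude it suffices to prove the matching identity $\phi_{\mathbb{B}}(1/\theta^\ell)+u_s\in\exp_\phi(\mathbb{T}_s(K_\infty))$: subtracting from the previous display yields $\phi_{\mathbb{B}}(\exp_\phi^{(1)}(\eta)-(-1)^\ell/\theta^\ell)\equiv 0$, and injectivity of $\exp_\phi^{(1)}|_{U_s}$ (Proposition \ref{ATRproposition2}) combined with freeness of $U_s$ over $A[\underline{t}_s]$ lets one cancel $\phi_{\mathbb{B}}$. The main obstacle is the matching identity itself: one must exhibit an explicit $f\in\mathbb{T}_s(K_\infty)$ with $\exp_\phi(f)=\phi_{\mathbb{B}}(1/\theta^\ell)+u_s$. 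This requires combining the leading-term description of $u_s$ in $\theta$ from Proposition \ref{proposition1} (namely $u_s=(-1)^{\ell+1}\theta^{s(q^\ell-1)/(q-1)-\ell q^\ell}+\text{lower order in }\theta$) with the closed form of $\mathbb{B}$ supplied by Proposition \ref{Preltheorem1}(2), and is the technical heart of the proof.
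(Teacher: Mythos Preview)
Your proposal is incomplete and takes a detour that is both far more complicated than necessary and essentially circular.

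The paper's proof is a two-line valuation computation. The crucial input (from \cite{APTR}, proof of Proposition 7.2) is that for $s\equiv 1\pmod{q-1}$, $s\geq q$, one has
\[
\exp_\phi(\mathbb T_s(K_\infty))=\Bigl\{x\in\mathbb T_s(K_\infty):\ v_\infty(x)>\tfrac{s-q}{q-1}\Bigr\}.
\]
With this description the lemma reduces to checking that $\exp_\phi^{(1)}(\eta)-(-1)^\ell/\theta^\ell$ has $v_\infty>\ell$, where $\ell=(s-q)/(q-1)$. The paper simply observes that all terms $j\geq 2$ in $\exp_\phi^{(1)}(\eta)$ already have $v_\infty>\ell$, and that the single surviving term $j=1$, namely $\widetilde\pi^{\,q}/\bigl((\theta^q-\theta)\omega(t_1)\cdots\omega(t_s)\bigr)$, agrees with $(-1)^\ell/\theta^\ell$ up to terms of $v_\infty>\ell$. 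That is the entire proof.

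Your first step, the closed formula $\exp_\phi^{(1)}(\eta)=\alpha\eta$ with $\alpha=\sum_{j\geq 1}j\,\widetilde\pi^{\,q^j-1}/D_j$, is correct and is in fact exactly what one needs: combined with the valuation description of $\exp_\phi(\mathbb T_s(K_\infty))$ above, the lemma becomes the elementary estimate $v_\infty(\alpha\eta-(-1)^\ell/\theta^\ell)>\ell$, which follows immediately from $v_\infty(\widetilde\pi^{\,q-1}/(\theta^q-\theta)+1)>0$ and $v_\infty(\eta+(-1)^\ell/\theta^\ell)>\ell$. You had the proof in hand after your first paragraph.

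Instead you embark on an algebraic reduction via $L=(-1)^{\ell+1}\mathbb B\,\eta$ that leads you to the ``matching identity'' $\phi_{\mathbb B}(1/\theta^\ell)+u_s\in\exp_\phi(\mathbb T_s(K_\infty))$, which you do not prove. This identity is not easier than the original statement: it involves the full polynomials $\mathbb B$ and $u_s$, whereas the lemma itself only requires the leading $\theta$-behaviour of $\eta$ and of one Carlitz coefficient. Worse, once you have the valuation description of $\exp_\phi(\mathbb T_s(K_\infty))$, the matching identity is \emph{equivalent} to the original lemma via the relations you already used, so nothing has been gained. Finally, your justification for cancelling $\phi_{\mathbb B}$ is not correct as stated: Proposition~\ref{ATRproposition2} gives injectivity of $\exp_\phi^{(1)}$ restricted to $U_s$, not injectivity of $\phi_{\mathbb B}$ on $\mathbb T_s(K_\infty)/\exp_\phi(\mathbb T_s(K_\infty))$; the latter is true but requires a separate argument (cf.\ the proof of Proposition~\ref{ATRproposition4}).
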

\begin{proof}${}$\par
\noindent   By \cite{APTR}, proof of Proposition 7.2,   we have:
$$ \exp_{\phi}(\mathbb T_s(K_{\infty}))=\{ x\in \mathbb T_s(K_{\infty}), v_{\infty}(x)>\frac{s-q}{q-1}\}.$$
Now observe that :
$$v_{\infty}(\frac{\widetilde{\pi}}{\omega(t_1)\cdots \omega(t_s)}+\frac{(-1)^{\frac{s-q}{q-1}}}{\theta^{\frac{s-q}{q-1}}})> \frac{s-q}{q-1}.$$
For $i\geq 2:$
$$v_{\infty}( \frac{b_i(t_1)\cdots b_i(t_s)\tau^i(\frac{\widetilde{\pi}}{\omega(t_1)\cdots \omega(t_s)}  )}{D_i})>\frac{s-q}{q-1}.$$
Furthermore, recall that $\tau (\omega(t_i))=(t_i-\theta) \omega(t_i).$
Finally, observe that :
$$v_{\infty}(\frac{\widetilde{\pi}^q}{(\theta^q-\theta)\omega(t_1)\cdots \omega(t_s)}-\frac{(-1)^{\frac{s-q}{q-1}}}{\theta^{\frac{s-q}{q-1}}})> \frac{s-q}{q-1}.$$
\end{proof}

\begin{lemma}\label{ATRlemma3}  Let $a\in A[\underline{t}_s]\setminus\{ 0\},$ $a$ monic as a polynomial in $\theta.$  If $\phi_{a}(\frac{(-1)^{\frac{s-q}{q-1}}}{\theta^{\frac{s-q}{q-1}}})\in A[\underline{t}_s]+\exp_{\phi}(\mathbb T_s(K_{\infty}))$ then $deg_{\theta}(a)\geq \frac{s-q}{q-1}.$
\end{lemma}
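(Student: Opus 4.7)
Let $m := (s-q)/(q-1)$; the case $m = 0$ is immediate, so assume $m \geq 1$. The plan is to establish the contrapositive $n := \deg_\theta(a) < m \Rightarrow \phi_a(1/\theta^m) \notin A[\underline{t}_s] + \exp_\phi(\mathbb{T}_s(K_\infty))$ by specializing $t_1 = \cdots = t_s = 0$ and reducing to a valuation-based linear-independence statement in $K_\infty$.

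From the proof of Lemma \ref{ATRlemma2} we have $\exp_\phi(\mathbb{T}_s(K_\infty)) = \{x \in \mathbb{T}_s(K_\infty) : v_\infty(x) > m\}$, so the hypothesis forces the ``fractional part'' of $\phi_a(1/\theta^m)$ in the range $v_\infty \in \{1, \ldots, m\}$ to vanish. Specializing $t_1 = \cdots = t_s = 0$ collapses $\beta_k = \prod_i(t_i - \theta^{q^k})$ to $(-1)^s \theta^{s q^k}$, so $\phi_\theta$ becomes the skew operator $\psi_\theta := (-1)^s \theta^s \tau + \theta$ acting on $K_\infty$, while $a|_{t_i=0}$ remains a monic polynomial $\tilde a = \theta^n + \sum_{j<n}\tilde a_j\theta^j \in \mathbb{F}_q[\theta]$ of the same $\theta$-degree $n$. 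Setting $h_j := \psi_{\theta^j}(\theta^{-m}) \in K_\infty$, the identity $\phi_a(1/\theta^m)|_{t_i=0} = \sum_{j=0}^n \tilde a_j h_j$ holds, and the specialized hypothesis forces the fractional part of $\sum_{j=0}^n \tilde a_j h_j$ (again in $v_\infty \in \{1, \ldots, m\}$) to vanish.

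The technical core is to expand $\psi_\theta^j = (\theta + (-1)^s \theta^s \tau)^j$ as a sum over length-$j$ words in the two letters; applied innermost-first to $\theta^{-m}$, each word contributes a signed monomial $\pm \theta^{E(w)}$. The ``all-$\theta$'' word ($r = 0$ copies of the $\tau$-letter) produces $\theta^{j-m}$ with coefficient $+1$. A direct computation using $s = q + m(q-1)$ shows that, for any word with $r \geq 1$ $\tau$-letters, the minimum exponent (attained by placing all $\tau$-letters at the innermost positions) equals
\[ j - m + \frac{q(q^r - 1)}{q - 1} - r \;=\; j - m + \sum_{i=1}^{r}(q^i - 1) \;\geq\; j - m + (q - 1) \;>\; j - m, \]
and an adjacent-transposition argument (moving a $\tau$-letter outermore changes the running exponent from $qE + s + 1$ to $qE + q + s$, a strict increase of $q - 1$) shows any non-optimal placement is strictly worse. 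Hence $j - m$ is the \emph{unique} smallest $\theta$-exponent in $h_j$, with coefficient $+1$; moreover every term of $h_j$ has exponent $\geq j - m$, so $v_\infty \leq m - j \leq m$ throughout.

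This unique-minimum property yields $\mathbb{F}_q$-linear independence of the fractional parts of $h_0, \ldots, h_{m-1}$: given a nontrivial combination $\sum c_j h_j$, let $j_0 := \min\{j : c_j \neq 0\}$; then the coefficient of $\theta^{j_0 - m}$ in the sum equals $c_{j_0} \neq 0$, since $h_{j_0}$ supplies it (coefficient $1$) and every $h_j$ with $j > j_0$ has all its exponents $\geq j - m > j_0 - m$. Because $j_0 - m \leq -1$ lies in the fractional range, the fractional part of $\sum c_j h_j$ is nonzero. Applied to $\tilde a$ with $\tilde a_n = 1$ and $n \leq m - 1$, the sum $\sum_{j=0}^n \tilde a_j h_j$ has nonzero fractional part, contradicting the specialized hypothesis; hence $n \geq m$. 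I expect the main obstacle to be the combinatorial/valuation analysis pinning down the unique-minimum claim for $h_j$; once that is secure, the linear independence and specialization steps are formal.
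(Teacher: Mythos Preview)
Your proof is correct and follows essentially the same approach as the paper: specialize $t_1=\cdots=t_s=0$ to reduce to the rank-one Drinfeld module $\psi_\theta=(-1)^s\theta^s\tau+\theta$ on $K_\infty$, and exploit the triangularity of the family $\psi_{\theta^j}(\theta^{-m})$ (for $0\le j\le m-1$) with respect to the basis $\theta^{-m},\ldots,\theta^{-1}$ of the ``fractional range''. The paper obtains this triangularity more economically via the single-step observation $\psi_\theta(\theta^{-\ell})\equiv\theta^{-(\ell-1)}\pmod{\theta^{-(\ell-2)}A}$ for $1\le\ell\le m$ and then peels off the coefficients $a_i|_{t_j=0}$ one at a time by induction, whereas you reach the same triangularity through the explicit word expansion of $(\theta+(-1)^s\theta^s\tau)^j$; both routes yield the same key inequality and the same contradiction when $\deg_\theta a<m$.
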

\begin{proof} 
Observe that if $1\leq \ell \leq \frac{s-q}{q-1},$ we have :
$$\phi_{\theta}(\frac{1}{\theta^{\ell}})\mid_{t_i=0, i=1, \ldots, s}\equiv \frac{1}{\theta^{\ell-1}} \pmod{ \frac{1}{\theta^{\ell-2}}A}.$$
Write $a=\theta^r+\sum_{i=0}^{r-1} a_i \theta ^i, $ $ a_i \in \mathbb F_q[\underline{t}_s].$  Set $b=a\mid_{t_j=0, j= 1, \cdots s}.$ 
Let $\psi :A\rightarrow A\{ \tau\}$ be the Drinfeld module of rank one given by $\psi_{\theta}= (-1)^s\theta^s \tau +\theta.$ Observe that:
$$\exp_{\psi}(K_{\infty})=\{ x\in K_{\infty}, v_{\infty}(x)>\frac{s-q}{q-1}\}.$$
Since $\phi_{a}(\frac{1}{\theta^{\frac{s-q}{q-1}}})\in A[\underline{t}_s]+\exp_{\phi}(\mathbb T_s(K_{\infty})),$  we have :
$$\psi_b(\frac{1}{\theta^{\frac{s-q}{q-1}}})\in A+\exp_{\psi}(K_{\infty}).$$
Therefore,  we get by induction on $i:$
$$\forall i\in \{ 0, \ldots, r-1\}, a_i\mid_{t_j=0, j=1, \cdots ,s}=0.$$
Thus :
$$\phi_{\theta^r}(\frac{1}{\theta^{\frac{s-q}{q-1}}})\mid_{t_j=0, j=1, \cdots ,s} \in A+\exp_{\psi}(K_{\infty}).$$
This implies that $r\geq \frac{s-q}{q-1}.$
\end{proof}
\noindent We can answer to \cite{APTR}, question 9.9, when $s\equiv 1\pmod{q-1}:$
\begin{theorem} \label{ATRtheorem1} ${}$\par
\noindent Let $s\geq 2, s\equiv 1\pmod{q-1},$ then  $H_s\otimes_{\mathbb F_q[\underline{t}_s]}\mathbb F_q(\underline{t}_s)$ is a cyclic $R=\mathbb F_q(\underline{t}_s)[\theta]$-module.
\end{theorem}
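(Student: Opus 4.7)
The plan is to show that $u_\star := (-1)^{\ell}\theta^{-\ell} \in H_s$, where $\ell := (s-q)/(q-1)$, is a cyclic vector for the $\phi_\theta$-action on $H_s \otimes_{\mathbb{F}_q[\underline{t}_s]} \mathbb{F}_q(\underline{t}_s)$, which immediately gives cyclicity of the module over $R$. The element $u_\star$ arises naturally from the theory: by Proposition~\ref{ATRproposition2} (whose $A[\underline{t}_s]$-linearity comes from the functional equation derived from $\exp_{\phi,z}\theta = (z(t_1-\theta)\cdots(t_s-\theta)\tau + \theta)\exp_{\phi,z}$ together with the vanishing $\exp_\phi(U_s)=0$ when $s\equiv 1\pmod{q-1}$, cf.\ \cite{APTR} proof of Proposition 7.2), the map $\exp_\phi^{(1)}\colon U_s\to\mathbb{T}_s(K_\infty)/\exp_\phi(\mathbb{T}_s(K_\infty))$ composed with projection to $H_s$ is $A[\underline{t}_s]$-linear, and by Lemma~\ref{ATRlemma2} it sends the generator $\widetilde\pi/(\omega(t_1)\cdots\omega(t_s))$ of $U_s$ to $u_\star$. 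Hence $A[\underline{t}_s]\cdot u_\star \subseteq H_s$ is automatically a cyclic $A[\underline{t}_s]$-submodule.

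First I would identify the $\mathbb{F}_q[\underline{t}_s]$-module structure of $H_s$ explicitly. Writing each element of $\mathbb{T}_s(K_\infty)$ uniquely as $\sum_{j\in\mathbb{Z}} P_j(\underline{t}_s)\theta^{-j}$ with $P_j \in \mathbb{F}_q[\underline{t}_s]$ (and the usual convergence condition), the fact that $A[\underline{t}_s]$ corresponds to sums supported in $j\le 0$ and the description $\exp_\phi(\mathbb{T}_s(K_\infty)) = \{v_\infty > \ell\}$ (which corresponds to sums supported in $j > \ell$, cf.\ \cite{APTR}) yield an $\mathbb{F}_q[\underline{t}_s]$-linear isomorphism $H_s \cong \bigoplus_{j=1}^{\ell}\mathbb{F}_q[\underline{t}_s]\theta^{-j}$, free of rank $\ell$. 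In particular $\dim_{\mathbb{F}_q(\underline{t}_s)}(H_s \otimes_{\mathbb{F}_q[\underline{t}_s]} \mathbb{F}_q(\underline{t}_s)) = \ell$.

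Then I would form the Krylov matrix $K(\underline{t}_s) \in \mathrm{Mat}_\ell(\mathbb{F}_q[\underline{t}_s])$ whose columns are $u_\star, \phi_\theta(u_\star), \ldots, \phi_\theta^{\ell-1}(u_\star)$ expressed in the ordered basis $(\theta^{-\ell}, \theta^{-(\ell-1)}, \ldots, \theta^{-1})$, and verify $\det K(0) = 1$. Specializing at $\underline{t}_s = 0$ reduces $\phi$ to $\psi_\theta = (-1)^s \theta^s \tau + \theta$, so $\psi_\theta(\theta^{-j}) = (-1)^s\theta^{s-jq} + \theta^{-(j-1)}$: the second summand is the main shift $\theta^{-j}\mapsto \theta^{-(j-1)}$ sending each basis vector one step forward in the ordering, while the first, whenever it survives the quotient by $A$ and $\{v_\infty > \ell\}$ to contribute a nonzero multiple of a basis element $\theta^{-j'}$ with $j'\in\{1,\ldots,\ell\}$, satisfies $j' < j-1$ (the required inequality $jq - s < j - 1$ reduces to $j < k := \ell + 1$, automatic for $j \le \ell$), placing the correction strictly later in the ordered basis than the main shift output. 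Consequently $K(0)$ is lower triangular with $1$'s on the diagonal, so $\det K(0) = 1$; thus $\det K(\underline{t}_s)$ is a nonzero element of $\mathbb{F}_q[\underline{t}_s]$ and the Krylov vectors are linearly independent over $\mathbb{F}_q(\underline{t}_s)$. Being $\ell$ independent vectors in an $\ell$-dimensional space, they span it, giving $R\cdot u_\star = H_s\otimes\mathbb{F}_q(\underline{t}_s)$, which is cyclic.

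The main technical point requiring care is the index bookkeeping in the last paragraph: for every $j\in\{1,\ldots,\ell\}$ one must verify that the term $(-1)^s\theta^{s-jq}$ either lies in $A$, in $\{v_\infty > \ell\}$, or contributes at a strictly later basis element $\theta^{-j'}$ with $j' < j-1$, so that the lower triangular structure of $K(0)$ is preserved uniformly—this is the place where the hypothesis $s\equiv 1\pmod{q-1}$ enters essentially, through the identity $(s-1)/(q-1) = \ell+1$.
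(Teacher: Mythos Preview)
Your proof is correct, and the core computation — specializing $\phi_\theta$ at $\underline{t}_s=0$ and exploiting the resulting triangular structure — is exactly the content of the paper's Lemma~\ref{ATRlemma3}. The packaging, however, is genuinely more direct than the paper's: where the paper routes through the injective map $\exp_\phi^{(1)}\colon \mathbb{F}_q(\underline{t}_s)U_s\to\mathcal{K}/\exp_\phi(\mathcal{K})$ (Proposition~\ref{ATRproposition2}), introduces the auxiliary submodule $V=\{x:\exp_\phi^{(1)}(x)\in R+\exp_\phi(\mathcal{K})\}$, and then uses Lemmas~\ref{ATRlemma2} and~\ref{ATRlemma3} plus a dimension count to force $V=LR$, you simply write down the Krylov matrix of $\phi_\theta$ with respect to the explicit basis $\theta^{-\ell},\dots,\theta^{-1}$ and check its determinant is nonzero at $\underline{t}_s=0$. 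Your opening paragraph invoking $\exp_\phi^{(1)}$ and Proposition~\ref{ATRproposition2} is therefore purely motivational and could be dropped; the proof stands without it. What the paper's longer route buys is the explicit isomorphism $\mathbb{F}_q(\underline{t}_s)U_s/LR\cong H_s\otimes\mathbb{F}_q(\underline{t}_s)$ induced by $\exp_\phi^{(1)}$, which feeds directly into Corollary~\ref{ATRcorollary1} and Theorem~\ref{ATRtheorem2}; your argument gives cyclicity but not this particular isomorphism.

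One small correction: the diagonal entries of $K(0)$ are all $(-1)^\ell$ (coming from $u_\star=(-1)^\ell\theta^{-\ell}$), so $\det K(0)=(-1)^{\ell^2}$ rather than $1$; of course this is still nonzero and the argument is unaffected.
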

\begin{proof}

Let $\mathcal K$ be the completion of $\mathbb F_q(\underline{t}_s)(\theta)$ for the $\infty$-adic Gauss valuation $v_{\infty}$ on $\mathbb F_q(\underline{t}_s)(\theta).$ Let $\tau :\mathcal K\rightarrow \mathcal  K$ be the continuous morphism of $\mathbb F_q(\underline{t}_s)$-algebras given by $\tau (\theta)=\theta^q.$ Recall that  $\phi : R\rightarrow R\{\tau\}$ is the morphism of $\mathbb F_q(\underline{t}_s)$-algebras such that $\phi_{\theta}=(t_1-\theta)\cdots (t_s-\theta)\tau +\theta.$ If $S\subset \mathcal K,$ we denote by $\mathbb F_q(\underline{t}_s)S$ the $\mathbb F_q(\underline{t}_s)$-vector space generated by $S.$ Since $\mathbb F_q(\underline{t}_s)\mathbb T_s(K_{\infty})$ is dense in $\mathcal K,$ we have (\cite{APTR}, Proposition 5.4):
$$\{ x\in \mathcal K, \exp_{\phi}(x) \in R\} =\mathbb F_q(\underline{t}_s)U_s,$$
and  the map $H_s\rightarrow \frac{\mathcal K}{R+\exp_{\phi}(\mathcal K)}$ (which is injective) induces  an isomorphism of $R$-modules (\cite{APTR}, section 5.4.1):
$$H_s\otimes_{\mathbb F_q[\underline{t}_s]}\mathbb F_q(\underline{t}_s)\simeq \frac{\mathcal K}{R+\exp_{\phi}(\mathcal K)}.$$
Recall that $H_s$ is a free $\mathbb F_q[\underline{t}_s]$-module of rank $\frac{s-q}{q-1}$ (\cite{APTR}, Proposition 7.2 ). Let $F_s\in A[\underline{t}_s]$ be the monic polynomial in $\theta$ of degree $\frac{s-q}{q-1}$ such that :
$${\rm Fitt} _{A[\underline{t}_s]} H_s= F_s A[\underline{t}_s].$$
By the class number formula (\cite{APTR}, Theorem 5.11):
$$U_s=\frac{1}{F_s}L(t_1, \cdots, t_s)A[\underline{t}_s].$$
Note that:
$$\exp_{\phi}(\mathcal K)=\{ x\in \mathcal K, v_{\infty}(x)>\frac{s-q}{q-1}\}.$$
Observe that $\exp_{\phi}(\mathcal K)\cap \mathbb F_q(\underline{t}_s)\mathbb T_s(K_{\infty})= \exp_{\phi}( 
\mathbb F_q(\underline{t}_s)\mathbb T_s(K_{\infty})),$ thus, by Proposition \ref{ATRproposition2}, the map $\exp_{\phi}^{(1)}$ induces an injective morphism of $R$-modules:
$$\mathbb F_q(\underline{t}_s)U_s\rightarrow \frac{\mathcal K}{\exp_{\phi}(\mathcal K)}.$$
Set:
$$V=\{ x\in \mathbb F_q(\underline{t}_s)U_s, \exp_{\phi}^{(1)}(x)\in R+\exp_{\phi}(\mathcal K)\}.$$
Then :
$$L(t_1, \cdots, t_s) R\subset V.$$
Observe that $V$ is a free module of rank one and that the monic generator $G\in R$ of the Fitting ideal ${\rm Fitt}_R\frac{\mathbb F_q(\underline{t}_s)U_s}{V}$ divides $F_s.$ Thus $G\in A[\underline{t}_s].$ This implies that:
$$\exp_{\phi}^{(1)} (\frac{G\widetilde{\pi}}{\omega(t_1)\cdots \omega(t_s)}) \in R+\exp_{\phi}(\mathcal K).$$
But $\exp_{\phi}^{(1)} (\frac{G\widetilde{\pi}}{\omega(t_1)\cdots \omega(t_s)})\in \mathbb T_s(K_{\infty}),$ and 
$$(R+\exp_{\phi}(\mathcal K))\cap \mathbb T_s(K_{\infty})=A[\underline{t}_s]+\exp_{\phi}(\mathbb T_s(K_{\infty})).$$
Thus, by Lemma \ref{ATRlemma2}:
$$\phi_{G}(\frac{(-1)^{\frac{s-q}{q-1}}}{\theta^{\frac{s-q}{q-1}}})\in A[\underline{t}_s]+\exp_{\phi}(\mathbb T_s(K_{\infty})).$$
Therefore by Lemma \ref{ATRlemma3}, we get that $deg_{\theta}G\geq \frac{s-q}{q-1},$ and since $G$ divides $F_s,$ we get $G=F_s,$ i.e.:
$$V=L(t_1, \cdots, t_s) R.$$
This implies that $\exp_{\phi}^{(1)}$ induces an injective morphism of $R$-modules:
$$\frac{\mathbb F_q(\underline{t}_s)U_s}{L(t_1, \cdots, t_s) R}\rightarrow 
\frac{\mathcal K}{R+\exp_{\phi}(\mathcal K)}.$$
But the two $\mathbb F_q(\underline{t}_s)$-vector spaces above have the same dimension, therefore the above map is an isomorphism of $R$-modules and in particular $\frac{\mathcal K}{R+\exp_{\phi}(\mathcal K)}$ is a cyclic $R$-module.
\end{proof}

\begin{corollary}\label{ATRcorollary1}
The function $\exp_{\phi}^{(1)}$ induces an injective morphism of $A$-modules :
$$\frac{U_s}{L(t_1, \ldots, t_s)A[\underline{t}_s]}\rightarrow H_s,$$
and its cokernel is a finitely generated and torsion $\mathbb F_q[\underline{t}_s]$-modules.
\end{corollary}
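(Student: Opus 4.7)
The plan is to build directly on Proposition \ref{ATRproposition2} and the $R$-module isomorphism established inside the proof of Theorem \ref{ATRtheorem1}. First I would check that the composition $\exp_\phi^{(1)} : U_s \to H_s$ is well-defined and vanishes on $L(t_1,\ldots,t_s) A[\underline{t}_s]$. For $a\in A[\underline{t}_s]$, the computation carried out at the start of the proof of Proposition \ref{ATRproposition2} gives
$$\exp_\phi^{(1)}(L(t_1,\ldots,t_s)\,a)\equiv \phi_a(u_s)\pmod{\exp_\phi(\mathbb T_s(K_\infty))},$$
and since $u_s\in A[\underline{t}_s]$ and $\phi$ preserves $A[\underline{t}_s]$, the right-hand side lies in $A[\underline{t}_s]$ and thus represents $0$ in $H_s$. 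So $\exp_\phi^{(1)}$ induces an $A$-linear map $\Phi: U_s/L(t_1,\ldots,t_s)A[\underline{t}_s]\to H_s$.

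For injectivity, let $x\in U_s$ with $\Phi(x)=0$, i.e.\ $\exp_\phi^{(1)}(x)\in A[\underline{t}_s]+\exp_\phi(\mathbb T_s(K_\infty))\subset R+\exp_\phi(\mathcal K)$. Viewed inside $\mathbb F_q(\underline{t}_s)U_s$, the identification $V=L(t_1,\ldots,t_s)R$ obtained in the proof of Theorem \ref{ATRtheorem1} forces $x\in L(t_1,\ldots,t_s)R$. Using that $U_s=\frac{1}{F_s}L(t_1,\ldots,t_s)A[\underline{t}_s]$, write $x=\frac{1}{F_s}L(t_1,\ldots,t_s)\,b$ with $b\in A[\underline{t}_s]$; then $b/F_s\in R=\mathbb F_q(\underline{t}_s)[\theta]$. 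The point is now that $F_s$ is monic in $\theta$: performing Euclidean division of $b$ by $F_s$ inside $\mathbb F_q[\underline{t}_s][\theta]=A[\underline{t}_s]$ gives $b=F_s c+d$ with $c,d\in A[\underline{t}_s]$ and $\deg_\theta d<\deg_\theta F_s$; the fact that $b/F_s=c+d/F_s\in R$ is only compatible with $d=0$, so $b/F_s=c\in A[\underline{t}_s]$ and $x\in L(t_1,\ldots,t_s) A[\underline{t}_s]$. This gives injectivity of $\Phi$.

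Finally, for the statement on the cokernel, I would compare $\mathbb F_q(\underline{t}_s)$-dimensions. Since $F_s$ is monic in $\theta$ of degree $\tfrac{s-q}{q-1}$, one has $U_s/L(t_1,\ldots,t_s)A[\underline{t}_s]\simeq A[\underline{t}_s]/F_sA[\underline{t}_s]$, a free $\mathbb F_q[\underline{t}_s]$-module of rank $\tfrac{s-q}{q-1}$. On the other side, by \cite{APTR}, Proposition 7.2, $H_s$ is also free of rank $\tfrac{s-q}{q-1}$ over $\mathbb F_q[\underline{t}_s]$. Hence $\Phi\otimes_{\mathbb F_q[\underline{t}_s]}\mathbb F_q(\underline{t}_s)$ is an injective map between two $\mathbb F_q(\underline{t}_s)$-vector spaces of the same finite dimension, therefore an isomorphism (this is also what the proof of Theorem \ref{ATRtheorem1} actually establishes). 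Consequently $\mathrm{coker}\,\Phi$ is annihilated by a nonzero element of $\mathbb F_q[\underline{t}_s]$, so torsion over $\mathbb F_q[\underline{t}_s]$, and being a quotient of the finitely generated $\mathbb F_q[\underline{t}_s]$-module $H_s$ it is finitely generated. The main subtlety is the ``integrality'' step in the injectivity argument, namely passing from $L(t_1,\ldots,t_s)R$ to $L(t_1,\ldots,t_s)A[\underline{t}_s]$; everything else is bookkeeping on top of results already available.
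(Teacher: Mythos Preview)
Your proof is correct and follows essentially the same path as the paper's: both rely on the identification $V=L(t_1,\ldots,t_s)R$ from the proof of Theorem \ref{ATRtheorem1} together with the fact that $H_s$ and $U_s/L(t_1,\ldots,t_s)A[\underline{t}_s]$ are free $\mathbb F_q[\underline{t}_s]$-modules of the same rank $\tfrac{s-q}{q-1}$. Your Euclidean-division argument (using that $F_s$ is monic in $\theta$) is simply the explicit version of the paper's observation that the inclusion $U_s\subset \mathbb F_q(\underline{t}_s)U_s$ induces an injection $U_s/L A[\underline{t}_s]\hookrightarrow \mathbb F_q(\underline{t}_s)U_s/LR$, and your rank comparison replaces the paper's appeal to the injection $H_s\hookrightarrow \mathcal K/(R+\exp_\phi(\mathcal K))$; the content is the same.
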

\begin{proof}
Let $f\in \mathbb T_s(K_{\infty})$ such that $f\in R+\exp_{\phi}(\mathcal K).$ Then there exists $a\in A[\underline{t}_s]\setminus\{ 0\}$ such that :
$$af\in A[\underline{t}_s]+\exp_{\phi}(\mathcal K).$$
But since $\exp_{\phi}(\mathcal K)\cap \mathbb T_s(K_{\infty}))=\exp_{\phi}(\mathbb T_s(K_{\infty}),$ we get:
$$af\in A[\underline{t}_s]+\exp_{\phi}(\mathbb  T_s(K_{\infty})).$$
Since $s\equiv 1\pmod{q-1},$ $H_s$ is a free $\mathbb F_q[\underline{t}_s]$-module (\cite{APTR}, Proposition 7.2), we get:
$$ f\in A[\underline{t}_s]+\exp_{\phi}(\mathbb  T_s(K_{\infty})).$$
Therefore we have an injective morphism of $A$-modules induced by the inclusion $\mathbb T_s(K_{\infty})\subset \mathcal K:$
$$H_s\rightarrow \frac{\mathcal K}{R+\exp_{\phi}(\mathcal K)}.$$
Furthermore the inclusion $U_s\subset \mathbb F_q(\underline{t}_s)U_s$ induces an injective morphism of $A$-modules :
$$\frac{U_s}{L(t_1, \ldots, t_s)A[\underline{t}_s]}\rightarrow \frac{\mathbb F_q(\underline{t}_s)U_s}{L(t_1, \ldots, t_s)R}.$$
The Corollary is a consequence of the proof of Theorem \ref{ATRtheorem1}.
\end{proof}
\subsection{The case of Taelman's class modules}${}$\par
Let $\chi$ be a Dirichlet character of conductor $f$ and type $s\geq 2.$ Recall that there exists $\zeta_1(\chi), \ldots, \zeta_s(\chi)\in \overline{\mathbb F_q}$ such that :
$$\sigma (g(\chi))= (\zeta_1(\chi)-\theta)\cdots (\zeta_s(\chi)-\theta) g(\chi),$$
where $\sigma =\tau \otimes 1$ on $A[\lambda_f]\otimes_{\mathbb F_q}\mathbb F_q(\zeta_1(\chi), \ldots, \zeta_s(\chi)).$
A property is said to be true for almost all Dirichlet characters $\chi$ of type $s$  if there exists $F(t_1,\ldots, t_s)\in A[\underline{t}_s]\setminus \{0\}$ such that the property is true for every character $\chi$ such that $F(\zeta_1(\chi), \ldots, \zeta_s(\chi))\not =0$ (see \cite{APTR}, section 9). \par
Let $\chi$ be a Dirichlet character of type $s\geq2$ and conductor $f.$ Set :
$$e_{\chi}=\frac{1}{\mid \Delta_f\mid}\sum_{\mu \in \Delta_f} \chi(\mu) \mu^{-1}\in \mathbb F_q(\zeta_1(\chi), \ldots, \zeta_s(\chi))[\Delta_f].$$ 
The $\chi$-isotypic component of Taelman's class module associated to the Carlitz module and the field $K(\lambda_f)$ is defined by (see also \cite{APTR}, section 9):
$$H_{\chi}=e_{\chi}(\frac{K(\lambda_f)\otimes_{K}K_{\infty}}{A[\lambda_f]+\exp_C(K(\lambda_f)\otimes_{K}K_{\infty})}\otimes_{\mathbb F_q}\mathbb F_q(\zeta_1(\chi), \ldots, \zeta_s(\chi))).$$
Observe that $H_{\chi}$ is a finitely generated and torsion $A[\zeta_1(\chi), \ldots, \zeta_s(\chi)]$-module.
\begin{theorem}\label{ATRtheorem2}${}$\par
\noindent Let $s$ be an integer, $s\geq 2,$ $s\equiv 1\pmod{q-1}.$ Then for almost all Dirichlet characters $\chi$ of type $s,$ $H_{\chi}$ is a cyclic  $A[\zeta_1(\chi), \ldots, \zeta_s(\chi)]$-module and generated as a  $A[\zeta_1(\chi), \ldots, \zeta_s(\chi)]$-module by :
$$\frac{g(\chi)}{\theta^{\frac{s-q}{q-1}}}.$$
\end{theorem}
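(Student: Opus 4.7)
The plan is to transport the generic pseudo-cyclicity result (Theorem \ref{ATRtheorem1} together with Corollary \ref{ATRcorollary1}) to the $\chi$-isotypic component $H_\chi$ via the surjective specialization morphism $s_\chi : H_s \twoheadrightarrow H_\chi$ alluded to in the introduction and developed in \cite{APTR}, section 9.3. Concretely, $s_\chi$ is induced by sending the class of $f(t_1, \ldots, t_s) \in \mathbb T_s(K_\infty)$ to the class of $g(\chi) \cdot f(\zeta_1(\chi), \ldots, \zeta_s(\chi))$ inside $H_\chi$. The identity $\sigma(g(\chi)) = (\zeta_1(\chi) - \theta) \cdots (\zeta_s(\chi) - \theta) g(\chi)$ makes multiplication by $g(\chi)$ an $A$-equivariant identification between the $\phi$-action specialized at $t_i = \zeta_i(\chi)$ and the $C$-action on the $g(\chi)$-component, which is exactly what is needed to check that $s_\chi$ is a well-defined surjection of $A[\zeta_1(\chi), \ldots, \zeta_s(\chi)]$-modules.

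First I would identify an explicit cyclic generator for $H_s \otimes_{\mathbb F_q[\underline{t}_s]} \mathbb F_q(\underline{t}_s)$ using the proof of Theorem \ref{ATRtheorem1}: the generator $\widetilde{\pi}/(\omega(t_1) \cdots \omega(t_s))$ of $U_s$ over $A[\underline{t}_s]$ is sent by $\exp_\phi^{(1)}$ to a class which, by Lemma \ref{ATRlemma2}, is congruent to $(-1)^{(s-q)/(q-1)}/\theta^{(s-q)/(q-1)}$ modulo $\exp_\phi(\mathbb T_s(K_\infty))$, and this class is shown in the proof of Theorem \ref{ATRtheorem1} to be a cyclic generator of $H_s \otimes \mathbb F_q(\underline{t}_s)$ over $R = \mathbb F_q(\underline{t}_s)[\theta]$. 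Then I would apply $s_\chi$ to this generator; its image is, up to a sign in $\mathbb F_q^\times$, exactly $g(\chi)/\theta^{(s-q)/(q-1)}$, so this element generates the image of $s_\chi$ as an $A[\zeta_1(\chi), \ldots, \zeta_s(\chi)]$-module.

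To handle the ``almost all'' quantifier, I would isolate the obstructions one has to avoid: the cokernel of the injection $U_s / L(t_1, \ldots, t_s) A[\underline{t}_s] \hookrightarrow H_s$ from Corollary \ref{ATRcorollary1}, the denominator $F_s$ appearing in $U_s = \frac{1}{F_s} L(t_1, \ldots, t_s) A[\underline{t}_s]$, and the $\mathbb F_q[\underline{t}_s]$-torsion that can appear in passing from $H_s$ to its generic version (which is harmless, since by \cite{APTR}, Proposition 7.2, $H_s$ is $\mathbb F_q[\underline{t}_s]$-free of rank $(s-q)/(q-1)$). All these obstructions are simultaneously annihilated by a single nonzero polynomial $F(t_1, \ldots, t_s) \in \mathbb F_q[\underline{t}_s]$, and the Dirichlet characters $\chi$ with $F(\zeta_1(\chi), \ldots, \zeta_s(\chi)) \neq 0$ are exactly those for which the argument goes through unchanged.

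The main obstacle I anticipate is verifying that the formation of $s_\chi$ actually intertwines the injection of Corollary \ref{ATRcorollary1} with the analogous map from Taelman units to $H_\chi$ in the classical Carlitz picture, so that the congruence of Lemma \ref{ATRlemma2} survives specialization at $t_i = \zeta_i(\chi)$. This should follow from the functional equation $\exp_\phi^{(1)} \theta = (t_1-\theta) \cdots (t_s-\theta) \tau \exp_\phi + \phi_\theta \exp_\phi^{(1)}$ combined with the intertwining $\tau \circ g(\chi) = (\zeta_1(\chi) - \theta) \cdots (\zeta_s(\chi) - \theta) g(\chi) \circ \sigma$; but the careful bookkeeping of which $\mathbb F_q[\underline{t}_s]$-torsion appears at each step, and its dependence on $\chi$ through the polynomial $F$, is the technical heart of the argument.
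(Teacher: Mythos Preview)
Your proposal is correct and follows essentially the same approach as the paper: identify the cyclic generator $\theta^{-(s-q)/(q-1)}$ of $H_s$ up to $\mathbb F_q[\underline{t}_s]$-torsion via Lemma \ref{ATRlemma2} and Corollary \ref{ATRcorollary1}, then push it through the surjective specialization $H_s\twoheadrightarrow H_\chi$ from \cite{APTR}, section 9, choosing $F\in\mathbb F_q[\underline{t}_s]\setminus\{0\}$ to kill the torsion cokernel. The paper's argument is slightly leaner than yours: it does not need to transport $\exp_\phi^{(1)}$ or the injection of Corollary \ref{ATRcorollary1} through the specialization at all; it simply lets $M\subset H_s$ be the cyclic $A$-submodule generated by $\theta^{-(s-q)/(q-1)}$, observes that $H_s/M$ is $\mathbb F_q[\underline{t}_s]$-torsion, and then specializes the inclusion $M\subset H_s$ directly, so the intertwining you flag as the main obstacle never arises.
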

\begin{proof} Let $M $ be the sub-$A$-module of $H_s$ generated by $\frac{1}{\theta^{\frac{s-q}{q-1}}}.$ Then by Corollary \ref{ATRcorollary1} and Lemma \ref{ATRlemma2},  the $A[\underline{t}_s]$-module $\frac{H_s}{M}$ is a finitely generated and torsion $\mathbb F_q[\underline{t}_s]$-module. Let $F(t_1, \ldots , t_s)\in A[\underline{t}_s]\setminus \{ 0\}$ such that $F(t_1, \ldots, t_s)\frac{H_s}{M}=\{ 0\}.$ Now, let $\chi$ be a Dirichlet character of type $s$ and conductor $f$ such that $F(\zeta_1(\chi), \ldots, \zeta_s(\chi))\not =0.$ Let $ev_{\chi} : \mathbb T_s(K_{\infty}) \rightarrow K_{\infty}(\zeta_1(\chi), \ldots, \zeta_s(\chi)), f(t_1, \ldots, t_s)\mapsto f(\zeta_1(\chi), \ldots, \zeta_s(\chi)).$ By \cite{APTR}, Lemma 9.1 and Proposition 9.2,  $ev_{\chi}$ induces a surjective morphism  of $A$-modules:
$$\psi : H_s\rightarrow H_{\chi} .$$
This morphism is induced by  the map:
$$\begin{array}{c}
\mathbb T_s(K_{\infty}) \rightarrow g(\chi) K_{\infty}(\zeta_1(\chi), \ldots, \zeta_s(\chi))\\
f\mapsto g(\chi)ev_{\chi}(f),\end{array}$$
where by \cite{APTR}, section 9,  $e_{\chi}((K(\lambda_f)\otimes_{\mathbb F_q}K_{\infty})\otimes_{\mathbb F_q}\mathbb F_q(\zeta_1(\chi), \ldots, \zeta_s(\chi)))$ is identified with $g(\chi) K_{\infty}(\zeta_1(\chi), \ldots, \zeta_s(\chi)).$ But, by the choice of $\chi,$ $\psi$ induces a surjective morphism of $A$-modules :
$$M\rightarrow H_{\chi}.$$
The Theorem follows.
\end{proof}

\subsection{Pseudo-nullity}${}$\par
Let $s\geq 2$ and set :
$$\mathcal M_s=\{ \phi_a(u_s), a\in A[\underline{t}_s]\},$$
$$\sqrt{\mathcal M_s}=\{ b\in A[\underline{t}_s], \exists a\in A[\underline{t}_s]\setminus\{ 0\}, \phi_a(b)\in \mathcal M_s\}.$$
\begin{proposition}\label{ATRproposition3} ${}$\par
\noindent Let $s\geq 2, s\not \equiv 1\pmod{q-1}.$ Then $\sqrt{\mathcal M_s}=\exp_{\phi}(U_s).$ Furtermore $H_s$ is a torsion $\mathbb F_q[\underline{t}_s]$-module if and only if $\sqrt{\mathcal M_s}=\mathcal M_s.$
\end{proposition}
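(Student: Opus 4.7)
\emph{Proof plan.} The strategy is to first identify $\sqrt{\mathcal M_s}$ with $\exp_\phi(U_s)$, then translate the equality $\sqrt{\mathcal M_s} = \mathcal M_s$ into the condition $\deg_\theta F_s = 0$ via Taelman's class number formula.

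For the inclusion $\exp_\phi(U_s) \subseteq \sqrt{\mathcal M_s}$: by Proposition~\ref{ATRproposition1} there exists $F_s \in A[\underline{t}_s]\setminus\{0\}$, monic in $\theta$, such that $U_s = \tfrac{1}{F_s} L\, A[\underline{t}_s]$ where $L := L(t_1,\ldots,t_s)$. For $b = \exp_\phi(\tfrac{g}{F_s}L) \in \exp_\phi(U_s)$, the functional equation $\phi_a\exp_\phi = \exp_\phi\cdot a$ (valid for $a\in A[\underline t_s]$) yields $\phi_{F_s}(b) = \exp_\phi(gL) = \phi_g(u_s) \in \mathcal M_s$, so $b \in \sqrt{\mathcal M_s}$.

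For the converse inclusion, take $b \in \sqrt{\mathcal M_s}$, so that $\phi_a(b) = \phi_c(u_s) = \exp_\phi(cL)$ for some $a \in A[\underline{t}_s]\setminus\{0\}$ and $c \in A[\underline{t}_s]$. Working in the Gauss-valuation completion $\mathcal K$ of $\mathbb F_q(\underline{t}_s)(\theta)$ used in the proof of Theorem~\ref{ATRtheorem1}, the element $f_0 := \tfrac{c}{a}L \in \mathcal K$ satisfies $\phi_a(b - \exp_\phi(f_0)) = 0$. The crucial technical step is the vanishing $\phi[a](\mathcal K) = 0$ when $s\not\equiv 1 \pmod{q-1}$: for $a = \theta$, any nonzero $x \in \mathcal K$ solving $\theta x + \prod_i(t_i-\theta)\tau(x) = 0$ must satisfy $(q-1)v_\infty(x) = s-1$, which is impossible because $v_\infty$ is $\mathbb Z$-valued on $\mathcal K$ while $(s-1)/(q-1) \notin \mathbb Z$; the case of arbitrary $a \in A[\underline t_s]\setminus\{0\}$ follows by a Newton-polygon analysis of $\phi_a \in \mathcal K\{\tau\}$ combined with reduction through the factorization of $a$. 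Consequently $b = \exp_\phi(f_0)$ in $\mathcal K$, so $b \in \mathbb T_s(K_\infty) \cap \exp_\phi(\mathcal K) = \exp_\phi(\mathbb T_s(K_\infty))$ (the last equality being the same identification used in the proof of Corollary~\ref{ATRcorollary1}). Since $b \in A[\underline t_s]$, any preimage under $\exp_\phi$ lies in $U_s$ by the very definition of $U_s$, so $b \in \exp_\phi(U_s)$.

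For the torsion criterion, Lemma~\ref{ATRlemma1} implies that $\exp_\phi$ is injective on $U_s$: if $\exp_\phi(\tfrac{g}{F_s}L) = 0$, applying $\phi_{F_s}$ gives $\phi_g(u_s) = 0$, forcing $g = 0$. Therefore $\exp_\phi$ induces an $A[\underline{t}_s]$-isomorphism $U_s/LA[\underline{t}_s] \xrightarrow{\sim} \sqrt{\mathcal M_s}/\mathcal M_s$, and the left-hand side is isomorphic to $A[\underline{t}_s]/F_s A[\underline{t}_s]$, which is a free $\mathbb F_q[\underline{t}_s]$-module of rank $\deg_\theta F_s$ because $F_s$ is monic in $\theta$. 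Hence $\sqrt{\mathcal M_s} = \mathcal M_s$ iff $\deg_\theta F_s = 0$. On the other hand, the Taelman-type class number formula in the multivariable setting (\cite{APTR}, Theorem~5.11) identifies $\mathrm{Fitt}_{A[\underline{t}_s]}(H_s)$ with $F_s A[\underline{t}_s]$ up to units; since $F_s$ is monic in $\theta$, tensoring with $\mathbb F_q(\underline{t}_s)$ gives $\dim_{\mathbb F_q(\underline{t}_s)} H_s \otimes_{\mathbb F_q[\underline{t}_s]}\mathbb F_q(\underline{t}_s) = \deg_\theta F_s$, so $H_s$ is $\mathbb F_q[\underline{t}_s]$-torsion iff $\deg_\theta F_s = 0$, completing the equivalence. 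The main obstacle is verifying $\phi[a](\mathcal K) = 0$ for general $a \in A[\underline{t}_s]$ involving the variables $t_i$, since the clean valuation argument for $a \in A$ requires refinement (Newton polygon or norm) to cover the $t_i$-dependent case.
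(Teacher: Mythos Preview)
Your strategy coincides with the paper's: establish $\sqrt{\mathcal M_s}=\exp_\phi(U_s)$, then deduce $U_s/LA[\underline t_s]\simeq\sqrt{\mathcal M_s}/\mathcal M_s$ and translate the equality into a statement about $H_s$. The paper, however, runs this much more cheaply by leaning on two citations. First, it invokes \cite{APTR}, Remark~6.3, which says outright that $\exp_\phi:\mathbb T_s(K_\infty)\to\mathbb T_s(K_\infty)$ is injective when $s\not\equiv 1\pmod{q-1}$. This is what you re-derive circuitously through Lemma~\ref{ATRlemma1}. Second, for the final equivalence the paper simply cites \cite{APTR}, Remark~9.13, in place of your class-number-formula argument (which is essentially what that remark records).

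The place where your extra work creates, rather than closes, a gap is the torsion-freeness step $\phi[a](\mathcal K)=0$. Your valuation argument for $a=\theta$ is fine, but the promised Newton-polygon/factorisation extension to arbitrary $a\in A[\underline t_s]$ is not carried out, and you correctly flag it as the main obstacle. The point you are missing is that this step is \emph{free} once $\exp_\phi$ is known to be injective: by \cite{APTR} (proof of Corollary~6.5, used in the paper's proof of Proposition~\ref{ATRproposition4}), any $\phi_a$-torsion point in $\mathcal K$ lies in $\exp_\phi(\tfrac{1}{a}\ker\exp_\phi)$, and since $\ker\exp_\phi=\{0\}$ here, the torsion is trivial for every nonzero $a$ at once. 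With this in hand the paper's one-line implication ``$\mathcal M_s\subset\exp_\phi(\mathbb T_s(K_\infty))\Rightarrow\sqrt{\mathcal M_s}\subset\exp_\phi(\mathbb T_s(K_\infty))$'' is immediate, and no Newton-polygon analysis is needed. So your proof is correct in outline and matches the paper's route; to close it, replace the incomplete Newton-polygon paragraph by the injectivity citation and the torsion description from \cite{APTR}.
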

\begin{proof} Recall that, since $s\not \equiv 1\pmod{q-1},$ $\exp_{\phi}: \mathbb T_s(K_{\infty})\rightarrow \mathbb T_s(K_{\infty})$ is  injective (see \cite{APTR}, Remark 6.3). Since $\mathcal M_s\subset \exp_{\phi}(\mathbb T_s(K_{\infty})),$ we get $\sqrt{\mathcal M_s}\subset \exp_{\phi}(\mathbb T_s(K_{\infty})),$ and therefore $\sqrt{\mathcal M_s}=\exp_{\phi}(U_s).$ Thus, $\exp_{\phi}$ induces an isomorphism of $A$-modules:
$$\frac{U_s}{L(t_1, \ldots, t_s)A[\underline{t}_s]}\simeq \frac{\sqrt{\mathcal M_s}}{\mathcal M_s}.$$
The Proposition is then a consequence of \cite{APTR}, Remark 9.13.
\end{proof}
\noindent A natural question is wether or not $H_s$ is a torsion $\mathbb F_q[\underline{t}_s]$-module when $s\not \equiv 1\pmod{q-1}$ (see \cite{APTR}, Question 9.10). First, we observe the following :

\begin{proposition}\label{ATRproposition4} 
\noindent Let $s\geq2, $ $s\equiv 1\pmod{q-1}.$ We have:
$$\sqrt{\mathcal M_s}=\mathcal M_s.$$
\end{proposition}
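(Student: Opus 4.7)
The plan is to realize $A[\underline{t}_s]/\mathcal M_s$ as a submodule of an auxiliary quotient $T/V_{\mathrm{loc}}$ and reduce $\sqrt{\mathcal M_s}=\mathcal M_s$ to the $\phi$-torsion-freeness of $T/V_{\mathrm{loc}}$, with $T:=\mathbb T_s(K_\infty)/\exp_\phi(\mathbb T_s(K_\infty))$ and $V_{\mathrm{loc}}:=\exp_\phi^{(1)}(U_s)\subset T$.

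First I would exploit that for $s\equiv 1\pmod{q-1}$ every $w\in U_s$ lies in $\ker\exp_\phi$; combined with the functional equation
$$\exp_\phi^{(1)}\theta=\phi_\theta\exp_\phi^{(1)}+(t_1-\theta)\cdots(t_s-\theta)\tau\exp_\phi,$$
this yields by induction on $\deg_\theta a$ the congruence
$$\exp_\phi^{(1)}(aw)\equiv\phi_a(\exp_\phi^{(1)}(w))\pmod{\exp_\phi(\mathbb T_s(K_\infty))},\qquad a\in A[\underline{t}_s],\ w\in U_s.$$
Together with Proposition \ref{ATRproposition2}, this makes $\exp_\phi^{(1)}:U_s\hookrightarrow T$ an $A[\underline{t}_s]$-linear injection (with $U_s$ carrying its multiplicative structure and $T$ its $\phi$-structure), whose image is $V_{\mathrm{loc}}$. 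The vanishing $A[\underline{t}_s]\cap\exp_\phi(\mathbb T_s(K_\infty))=\{0\}$ recalled in the proof of Proposition \ref{ATRproposition2} similarly gives an injection $A[\underline{t}_s]\hookrightarrow T$ of $\phi$-modules. Using $u_s\equiv\exp_\phi^{(1)}(L)\pmod{\exp_\phi(\mathbb T_s(K_\infty))}$ and Corollary \ref{ATRcorollary1}, one verifies $V_{\mathrm{loc}}\cap A[\underline{t}_s]=\mathcal M_s$ inside $T$, yielding the desired embedding $A[\underline{t}_s]/\mathcal M_s\hookrightarrow T/V_{\mathrm{loc}}$. For any $b\in\sqrt{\mathcal M_s}$ with $\phi_a(b)\in\mathcal M_s$ and $a\neq 0$, the class $[b]$ is $\phi_a$-torsion in $T/V_{\mathrm{loc}}$, so proving the latter torsion-free forces $b\in V_{\mathrm{loc}}\cap A[\underline{t}_s]=\mathcal M_s$.

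The main obstacle is this torsion-freeness of $T/V_{\mathrm{loc}}$. I would attack it by descending to $\mathbb F_q(\underline{t}_s)$-coefficients: the density and cyclicity arguments from the proof of Theorem \ref{ATRtheorem1} identify $(T/V_{\mathrm{loc}})\otimes\mathbb F_q(\underline{t}_s)$ with $\bar R/\bar{\mathcal M}_s$, where $\bar R:=\mathbb F_q(\underline{t}_s)[\theta]$ and $\bar{\mathcal M}_s:=\{\phi_c(u_s):c\in\bar R\}$. Over the PID $\bar R$ the $\phi$-torsion-freeness of $\bar R/\bar{\mathcal M}_s$ reduces, via Euclidean division $c=aq_0+r$ with $\deg_\theta r<\deg_\theta a$, to showing $\phi_r(u_s)\notin\phi_a(\bar R)$ whenever $r\neq 0$; a $\theta$-degree comparison using the explicit form of $u_s$ from Proposition \ref{proposition1} together with the non-torsion statement of Lemma \ref{ATRlemma1} supplies the required contradiction, completing the proof.
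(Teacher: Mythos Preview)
Your overall architecture is close to the paper's: both exploit the $A[\underline t_s]$-linearity of $\exp_\phi^{(1)}$ on $U_s$, the identification $u_s\equiv\exp_\phi^{(1)}(L)$, and the input from Theorem~\ref{ATRtheorem1}. However, there are two genuine gaps.

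First, the claim that $T/V_{\mathrm{loc}}$ is $\phi$-torsion-free is stronger than what you can extract by tensoring with $\mathbb F_q(\underline t_s)$. The quotient $(T/V_{\mathrm{loc}})/\bigl(A[\underline t_s]/\mathcal M_s\bigr)$ is exactly the cokernel of Corollary~\ref{ATRcorollary1}, which is only known to be $\mathbb F_q[\underline t_s]$-torsion, not zero; any nonzero element there is $\phi$-torsion in $T/V_{\mathrm{loc}}$. Tensoring with $\mathbb F_q(\underline t_s)$ kills this torsion, so torsion-freeness of $(T/V_{\mathrm{loc}})\otimes\mathbb F_q(\underline t_s)$ does not give torsion-freeness of $T/V_{\mathrm{loc}}$. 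What you actually need is that $A[\underline t_s]/\mathcal M_s$ injects into $\bar R/\bar{\mathcal M}_s$; this is the statement that $A[\underline t_s]/\mathcal M_s$ is $\mathbb F_q[\underline t_s]$-torsion-free, which is itself a special case of $\sqrt{\mathcal M_s}=\mathcal M_s$ and needs an independent argument (it can be done using that $u_s$ has unit leading $\theta$-coefficient, but you do not mention this).

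Second, and more seriously, the final ``$\theta$-degree comparison'' does not yield the claimed contradiction. If $\phi_r(u_s)=\phi_a(z)$ with $z\in\bar R\setminus\{0\}$, $d_r=\deg_\theta r$, $d_a=\deg_\theta a$, then equating $\theta$-degrees gives
\[
\deg_\theta z \;=\; q^{-(d_a-d_r)}\Bigl(\deg_\theta u_s - s\,\tfrac{q^{\,d_a-d_r}-1}{q-1}\Bigr).
\]
With $s=q+\ell(q-1)$ and $\deg_\theta u_s=s\frac{q^\ell-1}{q-1}-\ell q^\ell$ from Proposition~\ref{proposition1}, one checks for instance that when $d_a-d_r=1$ and $\ell\geq 2$ (i.e.\ $s\geq 3q-2$) this is a non-negative integer, so there is no degree obstruction. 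Thus your reduction to $\phi_r(u_s)\notin\phi_a(\bar R)$ is correct, but the proposed proof of that statement fails; in fact this statement is essentially a reformulation of the proposition itself.

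The paper sidesteps both issues by working in $E=\mathcal K/\mathcal N$ with $\mathcal N=\exp_\phi(\mathcal K)=\{v_\infty>\frac{s-q}{q-1}\}$. It first proves $E$ (not a quotient of it) is $\phi$-torsion-free, then writes $b=\exp_\phi^{(1)}(\frac{cL}{a})$ in $E$, performs Euclidean division $c=da+r$, and uses the key valuation fact $v_\infty(\frac{rL}{a})>0$ together with the identity $V=LR$ from the proof of Theorem~\ref{ATRtheorem1} to force $\frac{rL}{a}\in\mathcal N$. This replaces your degree comparison by an $\infty$-adic valuation argument, which is what makes the proof go through.
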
 
\begin{proof}
We keep the notation used in the proof of Theorem \ref{ATRtheorem1}. For simplicity set $\mathcal N=\exp_{\phi}(\mathcal K)=\{ x\in \mathcal K, v_{\infty}(x)>\frac{s-q}{q-1}\}=\exp_{\phi}(\mathcal N),$ and set $E=\frac{\mathcal K}{\mathcal N}$ viewed as an $R$-module via $\phi.$ Set $\mathcal U_s=\mathbb F_q(\underline{t}_s)U_s=\{ x\in \mathcal K, \exp_{\phi}(x)\in R\}.$ Then $\exp_{\phi}^{(1)}$ induces an injective morphism of $A$-modules : $\mathcal U_s\rightarrow E.$ Recall that we have the following equality in $E:$
$$\exp_{\phi}^{(1)}(L(t_1, \ldots,t_s))= u_s.$$
Also note that:
$$\mathcal K= \mathcal U_s\oplus \mathcal N.$$
Let $x\in \mathcal K$ and let $c\in R\setminus\mathbb F_q(\underline{t}_s)$ such that $\phi_c(x)=0$ in $E.$ Then  there exists $y\in \mathcal K$ such that 
$\phi_c(x)=\exp_{\phi}(y).$ But this implies that (see \cite{APTR}, proof of Corollary 6.5) :
$$x\in \exp_{\phi}(\frac{y}{c}) + \exp_\phi({\frac{1}{c}\mathcal U_{s}}).$$
We get $x=0$ in $E.$ Therefore, for $c\in R\setminus \{0\},$ $\phi_c:E\rightarrow E$ is an injective morphism of $A$-modules.\par

\noindent
Now let $b\in \sqrt{\mathcal M_s}\setminus\{ 0\}, $ then there exists $a,c \in A[\underline{t}_s]\setminus \{0\},$ such that:
$$\phi_a(b)=\phi_c(u_s).$$
By the above discussion, we get in $E$ :
$$b=\exp_{\phi}^{(1)}(\frac{cL(t_1, \ldots, t_s)}{a}).$$
Write $c=da+r$ with $d,r\in A[\underline{t}_s]$ and $\deg_\theta(r)<\deg_\theta(a)$, and observe that :
$$\frac{cL(t_1, \ldots, t_s)}{a} = dL(t_1, \ldots, t_s) + \frac{rL(t_1, \ldots, t_s)}{a} \in \mathcal K= \mathcal U_s\oplus \mathcal N.$$
Thus, since $\exp_{\phi}^{(1)}(\mathcal N)\subset \mathcal N,$ by the proof of Theorem \ref{ATRtheorem1}:
$$\frac{rL(t_1, \ldots, t_s)}{a}\in L(t_1, \ldots,t_s)R\oplus \mathcal N.$$
Since $\deg_\theta(r)<\deg_\theta(a)$, $\frac{rL(t_1, \ldots, t_s)}{a} \in \mathcal N,$ thus, using $\exp_{\phi}^{(1)}(\mathcal N) \subset \mathcal N$, we get in $E$ : 
$$b=\exp_{\phi}^{(1)}(dL(t_1, \ldots, t_s)) =\phi_d(u_s)$$
Since $b, u_s \in A[\underline{t}_s],$ this implies the equality in $\mathcal K:$ $b=\phi_d(u_s),$ i.e. $b\in \mathcal M_s.$
\end{proof}

\begin{theorem}\label{ATRtheorem3}
Let $s\geq 2, $ then:
$$\sqrt{\mathcal M_s}=\mathcal M_s.$$
\end{theorem}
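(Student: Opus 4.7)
Since the case $s \equiv 1 \pmod{q-1}$ is Proposition~\ref{ATRproposition4}, I would address the remaining case $s \not\equiv 1 \pmod{q-1}$. In this range Proposition~\ref{ATRproposition3} already provides the $A[\underline{t}_s]$-module isomorphism $\sqrt{\mathcal M_s}/\mathcal M_s \simeq U_s/L(t_1,\ldots,t_s)A[\underline{t}_s]$ induced by $\exp_\phi$, so the theorem is equivalent to showing that $L(t_1,\ldots,t_s)$ generates the rank-one free $A[\underline{t}_s]$-module $U_s$.

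The plan is to transcribe, step by step, the proof of Proposition~\ref{ATRproposition4}, using two simplifications specific to $s \not\equiv 1 \pmod{q-1}$: $\exp_\phi$ itself is injective on $\mathbb T_s(K_\infty)$ (Remark~6.3 of \cite{APTR}), so $\exp_\phi^{(1)}$ can be replaced everywhere by $\exp_\phi$; and the log-algebraicity identity already reads as a clean equality $\exp_\phi(L(t_1,\ldots,t_s)) = u_s$ in $\mathbb T_s(K_\infty)$, without passage to derivatives. Setting $\mathcal N = \exp_\phi(\mathcal K)$, $E = \mathcal K/\mathcal N$ as an $R$-module via $\phi$ (where $R = \mathbb F_q(\underline{t}_s)[\theta]$), and $\mathcal U_s = \mathbb F_q(\underline{t}_s)U_s$, the argument of Proposition~\ref{ATRproposition4} transfers to give the direct-sum decomposition $\mathcal K = \mathcal U_s \oplus \mathcal N$ (directness being immediate from injectivity of $\exp_\phi$) and the injectivity of $\phi_c:E\to E$ for every $c \in R\setminus\{0\}$.

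Given $b \in \sqrt{\mathcal M_s}\setminus\{0\}$, I would write $b = \exp_\phi(x)$ with $x \in U_s$ (Proposition~\ref{ATRproposition3}) and pick $a, c \in A[\underline{t}_s]\setminus\{0\}$ with $\phi_a(b) = \phi_c(u_s)$. Applying injectivity of $\exp_\phi$ to $\exp_\phi(ax) = \phi_a(b) = \phi_c(u_s) = \exp_\phi(cL)$ yields $ax = cL$ in $\mathbb T_s(K_\infty)$. Euclidean division $c = da + r$ with $\deg_\theta r < \deg_\theta a$ then gives $x = dL + (r/a)L$ in $\mathcal K$, and the equality $(r/a)L = x - dL$ puts $(r/a)L$ in $\mathcal U_s$. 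Following the end of Proposition~\ref{ATRproposition4}, one shows $(r/a)L \in LR + \mathcal N$; uniqueness in $\mathcal K = \mathcal U_s \oplus \mathcal N$ then forces $(r/a)L = \alpha L$ for some $\alpha \in R$, so $r/a \in R$. Combined with $\deg_\theta r < \deg_\theta a$, this forces $r = 0$, and therefore $b = \phi_d(u_s) \in \mathcal M_s$.

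The hard part will be the intermediate claim $(r/a)L \in LR + \mathcal N$. In Proposition~\ref{ATRproposition4} it was deduced from the pseudo-cyclicity Theorem~\ref{ATRtheorem1}, which is not at our disposal in the present range of $s$; the analog has to be established differently. I would argue it by a direct valuation analysis: since $v_\infty((r/a)L) \geq 1$ and Anderson's logarithm $\log_\phi$ (as in Section~3) converges on a suitable neighborhood of infinity in $\mathcal K$, one can iterate the action of $\phi$ to drive $(r/a)L$ into that neighborhood and thereby exhibit it as an element of $LR + \mathcal N$. This step -- replacing the pseudo-cyclicity input of Proposition~\ref{ATRproposition4} by a direct convergence/iteration argument in the $v_\infty$-adic affinoid algebra -- is the main technical obstacle.
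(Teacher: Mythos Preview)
Your reduction in the first paragraph is correct: by Proposition~\ref{ATRproposition3}, the case $s\not\equiv 1\pmod{q-1}$ amounts to showing $U_s=L(t_1,\ldots,t_s)A[\underline t_s]$, equivalently that $H_s$ is $\mathbb F_q[\underline t_s]$-torsion. But the second half of the proposal does not get there.

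The ``hard part'' you isolate is not a technical obstacle, it is the theorem itself. Once you know $(r/a)L\in\mathcal U_s$ with $v_\infty((r/a)L)\geq 1$, asking whether $(r/a)L\in LR+\mathcal N$ (and then, via your direct-sum splitting, $(r/a)L\in LR$) is literally asking whether every element of $\mathcal U_s$ of positive valuation lies in $LR$, i.e.\ whether $\mathcal U_s=LR$. Your proposed ``iteration of $\phi$ to drive $(r/a)L$ into the domain of $\log_\phi$'' cannot work: for $y$ with $v_\infty(y)$ in the relevant range one has $v_\infty(\phi_\theta(y))=v_\infty(\theta y)=v_\infty(y)-1$, so $\phi$-iteration decreases valuation rather than increasing it, and the bound $v_\infty((r/a)L)\geq 1$ is useless once $\frac{s-q}{q-1}\geq 1$. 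Also, the decomposition $\mathcal K=\mathcal U_s\oplus\mathcal N$ you import from Proposition~\ref{ATRproposition4} relied there on the explicit description $\mathcal U_s=\frac{\widetilde\pi}{\omega(t_1)\cdots\omega(t_s)}R$ available only when $s\equiv 1\pmod{q-1}$; for general $s$ you have no a priori control on the valuations of elements of $\mathcal U_s$, which is again exactly the point at issue.

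The paper does not attempt to transplant Proposition~\ref{ATRproposition4}. It writes $s=r+(q-1)m$ with $2\leq r\leq q-1$, introduces the auxiliary rank-one module $\psi_\theta=(t_1-\theta)\cdots(t_r-\theta)\tau+\theta$ in only $r$ variables (for which $\exp_\psi(L(t_1,\ldots,t_r))=1$ by Lemma~\ref{lemma5}), and twists by $\eta=(\omega(t_{r+1})\cdots\omega(t_s))^{-1}$ so that $\exp_\phi(\eta\,\cdot)=\eta\,\exp_\psi(\cdot)$. This produces an explicit element $\lambda=\theta^{-\frac{s-r}{q-1}}L(t_1,\ldots,t_r)\eta$ whose $\phi$-exponential images of $\theta^{-k}\lambda$ give a basis of $\mathbb T_s(K_\infty)/(A[\underline t_s]+N)$; the resulting determinant is checked to be nonzero by specializing at $t_1=\cdots=t_s=0$, in the spirit of Lemma~\ref{ATRlemma3}. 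That construction of $\lambda$ via $\psi$ and the $\omega$-twist is the missing idea.
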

\begin{proof} We will use a similar argument as that used in the proof of Lemma \ref{ATRlemma3}. We can assume that $s\not \equiv 1\pmod{q-1}$ and let $r\in \{ 2, \ldots, q-1\}$ 
such that $s\equiv r\pmod{q-1}.$ We assume $s>r,$ the case $s=r$ being obvious. Set :
$$N=\{ x\in \mathbb T_s(K_{\infty}), v_{\infty}(x)\geq \frac{s-r}{q-1}\}.$$
Then, since $\frac{s-r}{q-1}>\frac{s-q}{q-1},$  we have :
$$N=\exp_{\phi}(N)\subset \exp_{\phi}(\mathbb T_s(K_{\infty})).$$
Observe that:
$$\mathbb T_s(K_{\infty})= \frac{1}{\theta^{\frac{s-r}{q-1}-1}}A[\underline{t}_s]\oplus N.$$
Set :
$$E=\frac{\mathbb T_s(K_{\infty})}{A[\underline{t}_s]+N}.$$
Observe that $E$ is a free and finitely generated $\mathbb F_q[\underline{t}_s]$-module of rank $\frac{s-r}{q-1}-1.$ By  Proposition \ref{ATRproposition3}, in order to prove the Theorem, it is enough to prove that there exists $\lambda \in \mathbb T_s(K_{\infty}),$ $v_{\infty}(\lambda)=0,$ such that if we write in $E:$
$$k\in \{ 1, \ldots, \frac{s-r}{q-1}-1\}, \, \exp_{\phi}(\theta^{-k}\lambda)=\sum_{j=1}^{\frac{s-r}{q-1}-1} \beta_{j,k}(\lambda)\theta^{-j}, \, \beta_{j,k}(\lambda)\in \mathbb F_q[\underline{t}_s],$$
then $det((\beta_{j,k}(\lambda))_{j,k})\in \mathbb  F_q[\underline{t}_s]\setminus \{0\}.$\par

\noindent Let $\psi:A\rightarrow \mathbb T_s(K_{\infty})\{Ê\tau \}$ be the morphism of $\mathbb F_q$-algebras given by :
$$\psi_{\theta}= (t_1-\theta)\cdots (t_r-\theta)\tau +\theta.$$
Let $\exp_{\psi}= 1+\sum_{j\geq 1} \frac{b_j(t_1)\cdots b_j(t_r)}{D_j}\tau^j .$ Recall that (Lemma \ref{lemma5}):
$$\exp_{\psi}(L(t_1, \ldots, t_r))=1.$$
Set:
$$\eta=\frac{1}{\omega(t_{r+1})\cdots \omega(t_s)}\in \mathbb T_s(K_{\infty})^\times.$$
Observe that $v_{\infty}(\eta)=\frac{s-r}{q-1}.$ 
We have :
$$\forall x \in \mathbb T_s(K_{\infty}), \exp_{\phi}(\eta x)=\eta\exp_{\psi}(x).$$ 
Thus, for $k\geq 0:$
$$\exp_{\phi}(\theta^k  L(t_1, \ldots, t_r)\eta)=\eta {\psi_{\theta^k}(1)}.$$
Now, observe that :
$$\exp_{\phi}(\theta^k  L(t_1, \ldots, t_r)\eta)\mid_{t_1=0, \ldots, t_s=0}\in (-\theta)^{\frac{s-r}{q-1}}\theta^k+(-\theta)^{k+1-\frac{s-r}{q-1}}A.$$
Now, if we set $\lambda = \theta ^{-\frac{s-r}{q-1}} L( t_1,\ldots, t_r) \eta,$ $\lambda\in \mathbb T_s(K_{\infty}),$ $v_{\infty}(\lambda)=0,$ and (see the proof of Lemma \ref{ATRlemma3}):
$$det((\beta_{j,k}(\lambda))_{j,k})\mid_{t_1=0, \ldots, t_s=0}\not =0.$$
\end{proof}
\noindent We deduce from the above Theorem by a similar argument as that of the proof of Theorem \ref{ATRtheorem2}:
\begin{corollary}\label{ATRcorollary2}${}$\par
\noindent Let $s$ be an integer, $s\geq 2,$ $s\not \equiv 1\pmod{q-1}.$ Then for almost all Dirichlet characters $\chi$ of type $s,$ $H_{\chi}=\{ 0\}.$ 
\end{corollary}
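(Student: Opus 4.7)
The plan is to mimic the specialization argument of Theorem~\ref{ATRtheorem2}, but feed in Theorem~\ref{ATRtheorem3} in place of Corollary~\ref{ATRcorollary1} so that the resulting structural statement becomes vanishing rather than cyclicity. The first step is to combine Proposition~\ref{ATRproposition3} with Theorem~\ref{ATRtheorem3}: for $s\geq 2$ with $s\not\equiv 1\pmod{q-1}$, the equality $\sqrt{\mathcal M_s}=\mathcal M_s$ forces $H_s$ to be a torsion $\mathbb F_q[\underline{t}_s]$-module. Since $H_s$ is also finitely generated over the Noetherian ring $\mathbb F_q[\underline{t}_s]$, its annihilator is a nonzero ideal, so I can pick $F(t_1,\dots,t_s)\in\mathbb F_q[\underline{t}_s]\setminus\{0\}$ with $F(t_1,\dots,t_s)\cdot H_s=\{0\}$.

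Next, I would carry out exactly the specialization step of the proof of Theorem~\ref{ATRtheorem2}. Let $\chi$ be any Dirichlet character of type $s$ and conductor $f$ with $F(\zeta_1(\chi),\dots,\zeta_s(\chi))\neq 0$; such $\chi$ form the ``almost all'' family in the statement. By \cite{APTR}, Lemma~9.1 and Proposition~9.2, the map
$$\psi:\mathbb T_s(K_\infty)\longrightarrow g(\chi)K_\infty(\zeta_1(\chi),\dots,\zeta_s(\chi)),\qquad f\longmapsto g(\chi)\,\mathrm{ev}_\chi(f)$$
induces a surjective morphism of $A$-modules $\psi:H_s\twoheadrightarrow H_\chi$, where $\mathrm{ev}_\chi(f)=f(\zeta_1(\chi),\dots,\zeta_s(\chi))$.

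Finally, I would transfer the annihilator across $\psi$. Take any $x\in H_\chi$ and lift it to some $\tilde x\in H_s$. Because $F$ lies in $\mathbb F_q[\underline{t}_s]$ (no $\theta$), its $A[\underline{t}_s]$-action on $\mathbb T_s(K_\infty)$ is ordinary multiplication, so $F\cdot\tilde x=0$ in $H_s$ gives
$$0=\psi(F\cdot\tilde x)=g(\chi)\,\mathrm{ev}_\chi\!\bigl(F(\underline{t}_s)\tilde x\bigr)=F(\zeta_1(\chi),\dots,\zeta_s(\chi))\cdot\psi(\tilde x)=F(\zeta_1(\chi),\dots,\zeta_s(\chi))\cdot x.$$
By the choice of $\chi$ the scalar $F(\zeta_1(\chi),\dots,\zeta_s(\chi))$ is a nonzero element of $\mathbb F_q(\zeta_1(\chi),\dots,\zeta_s(\chi))$, hence a unit in the coefficient ring $A[\zeta_1(\chi),\dots,\zeta_s(\chi)]$ over which $H_\chi$ is a module; thus $x=0$ and $H_\chi=\{0\}$.

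The only non-routine ingredient is the standing pseudo-nullity result Theorem~\ref{ATRtheorem3}, which has already been proved. Once that is in hand, there is no real obstacle: the specialization machinery from \cite{APTR} is tailor-made for exactly this kind of transfer, and the point requiring mild care is only to notice that scalar multiplication by elements of $\mathbb F_q[\underline{t}_s]$ on $H_s$ corresponds under $\psi$ to scalar multiplication by elements of $\mathbb F_q(\zeta_1(\chi),\dots,\zeta_s(\chi))$ on $H_\chi$, which is immediate from the explicit formula $\psi(f)=g(\chi)\,\mathrm{ev}_\chi(f)$.
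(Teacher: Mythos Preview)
Your proof is correct and follows exactly the route the paper intends: combine Proposition~\ref{ATRproposition3} with Theorem~\ref{ATRtheorem3} to see that $H_s$ is a finitely generated torsion $\mathbb F_q[\underline{t}_s]$-module, pick a nonzero $F\in\mathbb F_q[\underline{t}_s]$ annihilating it, and then run the specialization argument of Theorem~\ref{ATRtheorem2} (with $M=\{0\}$) using the surjection $\psi:H_s\twoheadrightarrow H_\chi$ from \cite{APTR}. The paper's own proof is just the one-line remark that this is ``a similar argument as that of the proof of Theorem~\ref{ATRtheorem2},'' so you have simply written out the details it leaves implicit.
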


${}$\par

  \end{document}